\newcommand\comment[1]{}
\newtheorem{assumption}{Assumption}
\newtheorem{theorem}{Theorem}
\newtheorem{lemma}{Lemma}
\newtheorem{corollary}{Corollary}
\newtheorem{remark}{Remark}
\title[Unfitted FEM for Maxwell]{An unfitted finite element method  with direct
extension stabilization for
time-harmonic Maxwell problems on smooth domains}
\author%[F. Y. Yang]
{Fanyi Yang %\thanks{ Email: yangfanyi@scu.edu.cn } 
}\address{Email: yangfanyi@scu.edu.cn\\  School of Mathematics, Sichuan University, Chengdu 610064, China}
\author%[X. P. Xie]
{Xiaoping Xie %\thanks{Corresponding author. Email: xpxie@scu.edu.cn} 
}
\address{Corresponding author. Email: xpxie@scu.edu.cn \\ School of Mathematics, Sichuan University, Chengdu 610064, China}
\renewcommand{\d}[1]{\mathrm d \boldsymbol{#1}}
\newcommand{\bm}[1]{\boldsymbol{#1}}
\newcommand{\bmr}[1]{\bm{\mr{#1}}}
\newcommand{\lj}{[ \hspace{-2pt} [}
\newcommand{\rj}{] \hspace{-2pt} ]}
\newcommand{\mb}[1]{\mathbb{#1}}
\newcommand{\mc}[1]{\mathcal{#1}}
\newcommand{\mr}[1]{\mathrm{#1}}
\newcommand{\jump}[1]{\lj #1 \rj}
\newcommand{\aver}[1]{ \{#1\}  }
\newcommand{\wt}[1]{ \widetilde{ #1}}
\newcommand\aseminorm[1]{| #1 |_{\bm{{a}}}}
\newcommand\anorm[1]{\| #1 \|_{\bm{{a}}}}
\newcommand\Anorm[1]{\| #1 \|_{\bm{\mathrm{A}}}}
\newcommand\cseminorm[1]{ | #1 |_{\bm{c}}}
\newcommand\cnorm[1]{\| #1 \|_{\bm{{c}}}}
\newcommand\Cnorm[1]{\| #1 \|_{\bm{\mathrm{C}}}}
\newcommand\Wnorm[1]{\| ( #1 ) \|_{\bm{\mathrm{W}}}}
\newcommand\gnorm[1]{ | #1 |_{\bm{g}}}
\newcommand\jnorm[1]{ | #1 |_{\bm{j}}}
\def\un{\bm{\mr{n}}}
\def\curl{\ifmmode \mathrm{curl} \else \text{curl}\fi}
\def\div{\ifmmode \mathrm{div} \else \text{div}\fi}
\def\Ned{\ifmmode \text{N\'ed\'elec} \else \text{N\'ed\'elec} \fi}
\def\supp{\ifmmode \text{supp} \else \text{supp} \fi}
\def\coec{\mu_{r}^{-1}}
\def\coed{\varepsilon_r}
\def\coek{k^2}
\def\pick{\Pi_{K}}
\def\MTh{\mc{T}_h}
\def\MThG{\mc{T}_h^{\Gamma}}
\def\MFh{\mc{F}_h}
\def\MFhG{\mc{F}_h^{\Gamma}}
\def\MEhcI{\mc{E}_h^{\circ, i}}
\def\MFhI{\mc{F}_h^{i}}
\def\MFhb{\mc{F}_h^b}
\def\MFhc{\mc{F}_h^{\circ}}
\def\MFhcI{\mc{F}_h^{\circ, i}}
\def\MFhcb{\mc{F}_h^{\circ, b}}
\def\MThc{\mc{T}_h^{\circ}}
\def\Oh{\Omega_h}
\def\Ohc{\Omega_h^\circ}
\def\Qh{Q_h^m}
\def\QhC{Q_h^{m, \bmr{c}}}
\def\QhD{Q_h^{m, \bmr{d}}}
\def\Qhc{Q_h^{m, \circ}}
\def\Qhcc{Q_h^{ m, \circ, \bmr{c}}}
\def\Es{E^*}
\def\Vh{\bmr{V}_h^{r}}
\def\VhC{\bmr{V}_h^{r, \bmr{c}}}
\def\VhD{\bmr{V}_h^{r, \bmr{d}}}
\def\vh{\bmr{V}_h}
\def\Vhc{\bmr{V}_h^{r, \circ}}
\def\Vhcc{\bmr{V}_h^{r, \circ, \bmr{c}}}
\def\qh{Q_h}
\def\Oho{\Oh}
\def\Ker{\text{Ker}}
\def\Cr{C_{\mathrm{reg}}}
\date{}
\begin{document}

% vim:spell:tw=70:fo+=Mn:cc=70
\begin{abstract}
  We propose an unfitted finite element method for numerically solving
  the time-harmonic Maxwell equations on a smooth domain. The model
  problem involves a Lagrangian multiplier to relax the divergence
  constraint of the vector unknown. The embedded boundary of the
  domain is allowed to cut through the background mesh arbitrarily. The unfitted scheme is based on a mixed interior penalty
  formulation, where  
  Nitsche penalty method is applied to enforce  the boundary condition
  in a weak sense, and a penalty stabilization technique is adopted
  based on a local direct extension operator to ensure the stability
  for cut elements. We prove the inf-sup stability and obtain optimal
  convergence rates under the energy norm and the $L^2$ norm for both
  the vector unknown and the Lagrangian multiplier.  Numerical
  examples in both two and three dimensions are presented to
  illustrate the accuracy of the method.

  \noindent \textbf{keywords}: unfitted finite element method; direct
  extension; time-harmonic Maxwell equation;

\end{abstract}

\maketitle

% vim:spell:tw=70:fo+=Mn:cc=70
\section{Introduction}
\label{sec_introduction}
Maxwell equations describe the laws of macroscopic electric and
magnetic phenomena, and have a wide range of applications in  science
and engineering fields like plasma physics, electrodynamics, antenna
design, satellites and telecommunication. 
In this paper, we present and analyze an unfitted finite element
method for  time-harmonic Maxwell equations on smooth domains.
In the literature, many
finite element methods have been developed for solving the
time-harmonic Maxwell equations, e.g. $H(\curl)$-conforming edge
element methods \cite{Bermudez2002finite, Chen2000finite,
Chen2007adaptive, Ern2018analysis, Hiptmair2002finite,
Lu2019continuous, Monk2003finite, Nedelec1980mixed, Nedelec1986new,
Zhong2009optimal}, discontinuous Galerkin methods
\cite{Feng2014absolutely, Houston2005interior, Lu2017absolutely,
Nguyen2011hybridizable, Perugia2003local, Perugia2002stabilized}, and
nodal type finite element methods
\cite{Brenner2007locally,Duan2018mixed, Monk2003finite}.

The above mentioned methods are based on fitted meshes that 
cover the computational domain exactly. For complex geometries, it  is not a
trivial task to generate
a high quality mesh to represent the domain accurately, especially in high dimensions. For  problems with
complex geometries, the unfitted finite element method has been a
popular and successful tool because the geometry description is
decoupled from the mesh generation 
\cite{Babuska2011stable, Fries2010extended, Burman2015cutfem,
Li1998immersed, Strouboulis2000design, Gurkan2019stabilized,
Lehrenfeld2016high, Xie2020extended}.  An important advance \cite{Hansbo2002unfittedFEM} is to
combine with the Nitsche-type penalization to weakly impose the
boundary condition or the interface condition, then the domain can be
easily embedded into the unfitted mesh. This method, sometimes called  Nitsche-XFEM or CutFEM, has been extensively applied in a variety
of problems, see \cite{Bordas2017geometrically, Burman2015cutfem,
Hansbo2014cut, Burman2021unfitted,
Massing2014stabilized, Gurkan2019stabilized} and the references
therein. It is noticeable that for the penalty method, one 
difficulty is the small cuts appearing in elements that are cut by
the embedded geometry, which may adversely effect the convergence of
the numerical scheme \cite{Burman2021unfitted}.  There are two common
ways to handle this issue:  one is employing some stabilization
mechanism, such as the ghost penalty \cite{Burman2010ghost}, and
the other one is applying some cell agglomeration algorithms
\cite{Johansson2013high, Burman2021unfitted, Huang2017unfitted}. 
To our best knowledge, there is no unfitted finite element method for
solving the time-harmonic Maxwell problem.

In this contribution, we develop a finite element method for solving the
time-harmonic Maxwell problem with unfitted meshes. The proposed
method is based on a mixed interior penalty formulation with the
Nitsche penalty method to enforce the boundary condition in a weak
sense. In our method, we apply a local extension operator
\cite{Yang2021unfitted}, together
with the idea of the ghost penalty stabilization
\cite{Burman2010ghost}, to address the issue caused by the small cuts
for both curl operator and Lagrangian multiplier. The method is shown
to be stable in the sense that the constants lying in the error
estimates are independent of how the embedded geometry intersects the
mesh.  The local extension operator just directly extends the
polynomial defined on the interior neighbouring element to the cut
elements. The method is easily implemented and can achieve the
high-order accuracy.  For the mixed
formulation, we prove the inf-sup stability for the form of the
divergence constraint, following the ideas in 
\cite{Guzman2018infsup}. 
The optimal convergence
rates for both solution variables under the associated energy norm and
the $L^2$ norm are obtained.  In addition, we explore the
relationship between the wave number $k$ and the constants appearing
in the error bounds.
We confirm the theoretical predictions and
illustrate the accuracy in a series of numerical examples in both two
and three dimensions. 

The rest of this paper is as follows. In Section
\ref{sec_preliminaries}, we introduce 
the local extension operator and   the associated penalty
bilinear forms. We also give some basic properties that are
fundamental in the numerical analysis. In Section \ref{sec_scheme}, we
define the mixed formulation and introduce an auxiliary formulation
which is more suitable for analysing. In Section \ref{sec_erroraux},
we present the main theoretical results including the inf-sup
stability, the continuity and the coercivity of bilinear forms and the
final error estimates.  The numerical performance of the proposed
method is tested in Section \ref{sec_numericalresults}. Finally, we
make some concluding remarks in Section \ref{sec_conclusion}.

% vim:spell:tw=70:fo+=Mn:cc=70
\section{Preliminaries} 
\label{sec_preliminaries}
Let $\Omega^* \subset \mb{R}^d(d = 2, 3)$ be a polygonal (polyhedral)
domain, and we let $\Omega \subset \Omega^*$ be an open subdomain with
a $C^2$-smooth boundary $\Gamma:= \partial \Omega$.  We denote by
$\MTh^*$ the background mesh on $\Omega^*$ into triangles
(tetrahedrons). For any element $K \in
\MTh^*$, we denote by $h_K$ the diameter of $K$ and by $\rho_K$ the
radius of the largest ball inscribed in $K$.  The mesh size $h$ is
given as $h := \max_{K \in \MTh^*} h_K$. The mesh $\MTh^*$ is assumed
to be quasi-uniform in the sense that there exists a constant $C$
independent of $h$, such that for any element $K \in \MTh^*$, there
holds $h \leq C \rho_K$. 

%\begin{figure}[htp]
  %\centering
  %\begin{minipage}[t]{0.46\textwidth}
    %\centering
    %\begin{tikzpicture}[scale=2]
      %\draw[thick] (-1, -1) -- (1, -1) -- (1, 1) -- (-1, 1) -- (-1,
      %-1);
      %\draw[blue, thick] (-0.75, 0) to [out = 70, in =180] (0.1, 0.75) 
      %to [out = 0, in = 95] (0.7, 0);
      %\draw[blue, thick] (-0.75, 0) to [out = 220, in = 180] (-0.2,
      %-0.85) to [out = 0, in = -90] (0.7, 0);
      %%\draw[blue, thick] (0, 0) circle [radius=0.32]; 
      %\node[] at (0, 0) {$\Omega$};
      %\node[] at (0.7, 0.7) {$\Omega^*$};
    %\end{tikzpicture}
  %\end{minipage}
  %\begin{minipage}[t]{0.46\textwidth}
    %\centering
    %\begin{tikzpicture}[scale=2]
      %\input{figure/MTh.tex} 
      %\draw[thick] (-1, -1) -- (1, -1) -- (1, 1) -- (-1, 1) -- (-1,
      %-1);
      %\draw[blue, thick] (-0.75, 0) to [out = 70, in =180] (0.1, 0.75) 
      %to [out = 0, in = 95] (0.7, 0);
      %\draw[blue, thick] (-0.75, 0) to [out = 220, in = 180] (-0.2,
      %-0.85) to [out = 0, in = -90] (0.7, 0);
      %%\draw[blue, thick] (0, 0) circle [radius=0.32]; 
      %%\node[] at (0.7, 0.7) {$\Omega^*$};
      %%\node[] at (0, 0) {$\Omega$};
    %\end{tikzpicture}
  %\end{minipage}
  %\caption{The domain $\Omega$ and the background mesh $\MTh^*$.}
  %\label{fig_domain}
%\end{figure}

We introduce two sets related to the domain $\Omega$, 
\begin{displaymath}
  \MTh := \{ K \in \MTh^* \ | \  K \cap \Omega \neq \varnothing\},
  \quad \MThc := \{ K \in \MTh \ | \ K \in \Omega \}.
\end{displaymath}
Here $\MTh$ is the computational mesh, which is the minimal
subset of $\MTh^*$ that entirely covers the domain $\Omega$, and
$\MThc$ is a subset of $\MTh$ consisting of all interior elements
located inside $\Omega$. We set their corresponding domains as
\begin{displaymath}
  \Oh := \text{Int}\Big(\bigcup_{K \in \MTh}  \overline{K} \Big),
  \quad \Ohc := \text{Int}\Big(\bigcup_{K
  \in \MThc} \overline{K} \Big),
\end{displaymath}
and obviously, there holds $\Ohc \subset \Omega \subset \Oh$.  We
denote by $\MFh$ the collection of all $d - 1$ dimensional faces in
$\MTh$, and then $\MFh$ is decomposed into $\MFh = \MFhI \cup \MFhb$,
where $\MFhI$ and $\MFhb$ are the sets of interior faces and boundary
faces in $\MTh$, respectively. For any $f \in \MFh$, we define $h_f$
as the diameter of $f$. Moreover, we denote by $\MFhc$ the set of all
$d - 1$ dimensional faces in $\MThc$ and, similarly, we decompose
$\MFhc$ into $\MFhc = \MFhcI \cup \MFhcb$, where $\MFhcI$ and $\MFhcb$
consist of interior faces and boundary faces in $\MThc$, respectively.
We define $\MThG$ and $\MFhG$ as the collections of the elements and
faces that are cut by the boundary $\Gamma$, 
\begin{displaymath}
  \MThG := \{K \in \MTh \ | \  K \cap \Gamma \neq \varnothing \},
  \quad \MFhG := \{ f \in \MFh \ | \ f \cap \Gamma \neq \varnothing
  \}.
\end{displaymath}
It can readily seen that $\MThG = \MTh \backslash \MThc$ and $\MFhG =
\MFhI \backslash \MFhc$. For any element $K \in \MTh$ and any face $f \in \MFh$, we define $K^0
:= K \cap \Omega$ and $f^0 := f \cap \Omega$ as their parts inside the
domain $\Omega$. For any element $K \in \MThG$, we define $\Gamma_K =
\Gamma \cap K$.

We make the following geometrical assumptions
\cite{Hansbo2002unfittedFEM, Massing2014stabilized, Wu2012unfitted,
Gurkan2019stabilized, Guzman2018infsup}, which can be always fulfilled
for the fine enough mesh, to ensure the embedded boundary $\Gamma$ is
well-resolved.
\begin{assumption}
  For any cut face $f \in \MFhG$, $f$ is intersected by $\Gamma$ at
  most once.
  \label{as_mesh1}
\end{assumption}
\begin{assumption}
  For any element $K \in \MThG$, we can assign an interior element
  $K^{\circ} \in \Delta(K) \cap \MThc$, where $\Delta(K) := \{ K' \in
  \MTh \ | \ \overline{K'} \cap \overline{K} \neq \varnothing\}$
  denotes the set of elements that touch $K$.
  \label{as_mesh2}
\end{assumption}
By the quasi-uniformity of $\MTh$, there exists a
constant $C_{\Delta}$ independent of $h$ such that for any element
$K$, the set $\Delta(K) \subset B(\bm{x}_K, C_\Delta h_K)$, where
$B(\bm{z}, r)$ denotes the ball centered at the point $\bm{z}$ with
the radius $r$, and $\bm{x}_K$ is the barycenter of the element $K$.
In addition, we assume the open bounded domain $\Omega^*$ contains the
union of all balls $B(\bm{x}_K, C_\Delta h_K)(\forall K \in \MTh)$.

We introduce the jump and average operators which are widely used in
the discontinuous Galerkin framework. Let $f \in
\MFhI$ be an interior face shared by two neighbouring elements
$K^+$ and $K^-$, with the unit outward normal vectors $\un^+$ and
$\un^-$ on $f$, respectively. For any piecewise smooth scalar-valued
function $v$ and any piecewise smooth vector-valued function $\bm{q}$,
the following jump operators $\jump{\cdot}$ and average operators
$\aver{\cdot}$ are involved in our scheme: 
\begin{displaymath}
  \begin{aligned}
    \jump{v} &:= \un^+v^+|_f  + \un^- v^-|_f, \\
    \jump{\un \cdot \bm{q}} := \un^+ \cdot \bm{q}^+|_f &+ \un^- \cdot
    \bm{q}^-|_f, \quad \jump{\un \times \bm{q}} := \un^+ \times
    \bm{q}^+|_f + \un^- \times \bm{q}^-|_f
  \end{aligned}
\end{displaymath}
and 
\begin{displaymath}
  \aver{v} := \frac{1}{2}( v^+|_f +  v^-|_f), \quad \aver{\bm{q}} :=
  \frac{1}{2}(\bm{q}^+|_f + \bm{q}^-|_f),
\end{displaymath}
where $v^+ := v|_{K^+}$, $v^- := v|_{K^-}$, $\bm{q}^+ :=
\bm{q}|_{K^+}$, $\bm{q}^- := \bm{q}|_{K^-}$.  The jump operators
$\jump{\cdot}$ and the averages $\aver{\cdot}$ on $\Gamma$ will also
be used, and their definitions are modified as 
\begin{displaymath}
  \begin{aligned}
    \jump{v}|_{\Gamma_K} &:=  \un v|_{\Gamma_K}, \quad \jump{\un \cdot
    \bm{q}}|_{\Gamma_K} := \un \cdot \bm{q}|_{\Gamma_K}, \quad
    \jump{\un \times \bm{q}}|_{\Gamma_K} := \un \times
    \bm{q}|_{\Gamma_K}, \\ 
    & \aver{v}|_{\Gamma_K} := v|_{\Gamma_K}, \quad 
    \aver{\bm{q}}|_{\Gamma_K} := \bm{q}|_{\Gamma_K}, \\ 
  \end{aligned}
\end{displaymath}
for any element $K \in \MThG$, where $\un$ is the unit outward normal
on $\Gamma$.

In this paper, $C$ and $C$ with subscripts are denoted as generic
constants which may differ between lines but are always independent of
the mesh size $h$ and the location of the boundary $\Gamma$ relative
to the mesh. For a bounded domain $D$, we will follow the standard
notations to the Sobolev spaces $L^2(D)$, $H^s(D)$, $L^2(D)^d$,
$H^s(D)^d$ with the regular exponent $s \geq 0$,  and their
corresponding inner products, norms and seminorms. The  spaces
$H^s(\div, D)$ and $H^s(\curl, D)$ are also involved as well as their
associated norms and semi-norms.  We define $H_0^s(\curl, D)$ as the
space of functions in $H^s(\curl, D)$ with vanishing tangential
traces.  
%Throughout the paper, the notation of the operator $\curl$ is
%consistent with the three-dimensional case. In two dimensions,  it is
%natural to identify the space $\mb{R}^2$ with the $(x, y)$ plane in
%$\mb{R}^3$. For the vector-valued function $\bm{u} = (u_1, u_2)^T$,
%the curl of $\bm{u}$ reads
%\begin{displaymath}
  %\nabla \times \bm{u} = \frac{\partial u_2 }{\partial x} -
  %\frac{\partial u_1}{\partial y}, 
%\end{displaymath}
%and for the scalar-valued function $q$, we let $\nabla \times q$ be
%the formal adjoint, which reads
%\begin{displaymath}
  %\nabla \times q = \left( \frac{\partial q}{\partial y},
  %- \frac{\partial q}{\partial x} \right)^T.
%\end{displaymath}

Next, we introduce the local extension operator
\cite{Yang2021unfitted}, which will be
involved in the numerical scheme to ensure the stability of our
method.
Given an integer $m \geq 0$ and for element $K \in \MTh$, the
local operator $E_K$ is defined as
\begin{equation}
  \begin{aligned}
    E_K: L^2(K) & \rightarrow \mb{P}_m(B(\bm{x}_K, C_\Delta
    h_K)), \\
    {v} & \rightarrow E_K {v}, \\
  \end{aligned} 
  \quad (E_K {v})|_K = \pick {v}, \quad \forall {v} \in
  L^2(K),\\
  \label{eq_EK}
\end{equation}
where $\pick$ is the $L^2$ projection operator from $L^2(K)$ to
$\mb{P}_m(K)$. For $v \in L^2(K)$, $E_K$
extends its $L^2$ projection $\pick v$ to define on the ball
$B(\bm{x}_K, C_\Delta h_K)$. Particularly for $v \in \mb{P}_m(K)$,
$E_K v$ is just the direct extension of $v$ to the ball $B(\bm{x}_K,
C_\Delta h_K)$.  

Let us give some basic properties of the operator $E_K$.
\begin{lemma}
  There hold %exist constants $C$ such that 
  \begin{equation}
    \begin{aligned}
      \| E_K v \|_{L^2(B(\bm{x}_K, C_\Delta h_K))} &\leq C \|
      \pick v \|_{L^2(K)},  \\
    \end{aligned} \quad \forall v \in L^2(K), \quad \forall K \in
    \MTh,
    \label{eq_EkL2}
  \end{equation}
  and 
  \begin{equation}
    \begin{aligned}
      \| E_K v \|_{L^2(B(\bm{x}_K, C_\Delta h_K))} &\leq C \| 
      v \|_{L^2(K)},  \\
      \|\nabla  E_K v \|_{L^2(B(\bm{x}_K, C_\Delta h_K))} &\leq C \|
      \nabla  v \|_{L^2(K)}. \\
    \end{aligned} \quad \forall v \in \mb{P}_m(K), \quad \forall K \in
    \MTh.
    \label{eq_EkL2p}
  \end{equation}
  \label{le_EkL2p}
\end{lemma}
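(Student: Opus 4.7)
The plan is to reduce both inequalities to a standard norm–equivalence argument on the finite-dimensional polynomial space $\mb{P}_m$, combined with a scaling argument using the quasi-uniformity of $\MTh$ and the shape regularity of $K$.

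First I would observe the key structural fact: by its definition in \eqref{eq_EK}, $E_K v$ belongs to $\mb{P}_m$ on the enlarged ball $B(\bm{x}_K, C_\Delta h_K)$, and its restriction to $K$ coincides with $\pick v$. Since a polynomial on a connected open set is uniquely determined by its values on any subset with nonempty interior, $E_K v$ is simply the unique polynomial extension of $\pick v$ from $K$ to the ball. The content of \eqref{eq_EkL2} is therefore the assertion that the map $p \mapsto p$ from $(\mb{P}_m, \|\cdot\|_{L^2(K)})$ into $(\mb{P}_m, \|\cdot\|_{L^2(B(\bm{x}_K, C_\Delta h_K))})$ is bounded with a constant independent of $K$ and $h$.

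To obtain that bound I would use the standard scaling to the reference configuration. Choose an affine map $F_K : \wh{K} \to K$ sending a shape-regular reference element $\wh{K}$ to $K$; shape regularity together with the quasi-uniformity of $\MTh^*$ guarantees that $F_K$ has Jacobian of order $h_K^d$ and that the preimage of $B(\bm{x}_K, C_\Delta h_K)$ is contained in a fixed ball $\wh{B}$ whose radius depends only on $C_\Delta$ and the shape-regularity constant $\Cr$. Pulling back, $\wh{p} := p \circ F_K \in \mb{P}_m$, and on the finite-dimensional space $\mb{P}_m(\wh{B})$ the two norms $\|\wh{p}\|_{L^2(\wh{K})}$ and $\|\wh{p}\|_{L^2(\wh{B})}$ are equivalent, i.e.
\begin{equation*}
  \|\wh{p}\|_{L^2(\wh{B})} \leq C \|\wh{p}\|_{L^2(\wh{K})},
\end{equation*}
with $C$ depending only on $m$, $C_\Delta$, and $\Cr$. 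Scaling back with Jacobian $h_K^d$ yields $\|E_K v\|_{L^2(B(\bm{x}_K, C_\Delta h_K))} \leq C \|\pick v\|_{L^2(K)}$, which is \eqref{eq_EkL2}.

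For \eqref{eq_EkL2p}, the assumption $v \in \mb{P}_m(K)$ gives $\pick v = v$, so the first estimate follows immediately from \eqref{eq_EkL2} since $\|\pick v\|_{L^2(K)} = \|v\|_{L^2(K)}$. For the gradient bound, I would apply the same reasoning to $\nabla E_K v \in \mb{P}_{m-1}$ (componentwise): the norm equivalence on the reference ball gives
\begin{equation*}
  \|\nabla E_K v\|_{L^2(B(\bm{x}_K, C_\Delta h_K))} \leq C \|\nabla E_K v\|_{L^2(K)},
\end{equation*}
and since $E_K v \equiv v$ on $K$ (as $v$ is already polynomial of degree $\leq m$), the right-hand side equals $C \|\nabla v\|_{L^2(K)}$. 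The only nontrivial step is the reference-ball norm equivalence, which is immediate from dimension counting once one fixes a reference configuration; the main care is that the covering $B(\bm{x}_K, C_\Delta h_K)$ has a uniform pullback radius, which is exactly what quasi-uniformity and shape regularity provide. No other obstacle is expected.
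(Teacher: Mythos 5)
Your proposal is correct and follows essentially the same route as the paper: both arguments reduce \eqref{eq_EkL2} to a norm equivalence on the finite-dimensional space $\mb{P}_m$ over fixed reference sets combined with an affine scaling justified by quasi-uniformity (the paper scales the inscribed ball $B(\bm{x}_K,\rho_K)\subset K$ about the barycenter, while you pull back to a reference element and a fixed containing ball, which is the same idea), and both then obtain \eqref{eq_EkL2p} by noting $\pick v = v$ and applying the argument componentwise to the gradient.
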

\begin{proof}
  We mainly prove the $L^2$ estimate \eqref{eq_EkL2}. From the
  quasi-uniformity of the mesh, the ball $B(\bm{x}_K, \rho_K) \subset K$
  and there exists a constant $C_1$ such that $h_K \leq C_1 \rho_K$.
  By the norm equivalence restricted on the space $\mb{P}_m(\cdot)$,
  we have that
  \begin{displaymath}
    \|{q} \|_{L^2(B(\bm{x}_K, C_\Delta C_1 ))} \leq C \|{q}
    \|_{L^2(B(\bm{x}_K, 1))}, \quad \forall {q} \in
    \mb{P}_m(B(\bm{x}_K, C_\Delta C_1 )).
  \end{displaymath}
  Considering the affine mapping from the ball $B(\bm{x}_K, 1)$ to
  $B(\bm{x}_K, \rho_K)$, which simultaneously maps the ball
  $B(\bm{x}_K, C_\Delta C_1 )$ to $B(\bm{x}_K, C_\Delta C_1 \rho_K)$,
  we conclude that 
  \begin{displaymath}
    \begin{aligned}
      \| E_K v \|_{L^2(B(\bm{x}_K, C_\Delta h_K))} &\leq  \| E_K v
      \|_{L^2(B(\bm{x}_K, C_\Delta C_1 \rho_K))} \leq C \| \pick v
      \|_{L^2(B(\bm{x}_K,  \rho_K))} \leq C \|\pick v \|_{L^2(K)}. \\
    \end{aligned}
  \end{displaymath}
  The estimates in \eqref{eq_EkL2p} are then the direct consequences
  of \eqref{eq_EkL2}. This completes the proof.
\end{proof}
The stability of the curl operator near the boundary is also
guaranteed by the local operator.
To this end, we extend the operator $E_K$ to
%define on 
vector-valued functions:
\begin{equation}
  \begin{aligned}
    E_K: L^2(K)^d & \rightarrow \mb{P}_m(B(\bm{x}_K, C_\Delta
    h_K))^d, \\
    \bm{v} & \rightarrow E_K \bm{v}, \\
  \end{aligned} 
  \quad (E_K \bm{v})|_K = \pick \bm{v}, \quad \forall \bm{v} \in
  L^2(K)^d,\\
  \label{eq_vEK}
\end{equation}
where $\pick$ is still the $L^2$ projection from $L^2(K)^d$ to the
polynomial space $\mb{P}_m(K)^d$. $E_K$ in \eqref{eq_vEK} can be
regarded as acting on vector-valued functions in a columnwise manner
with \eqref{eq_EK}.  Hence, Lemma \ref{le_EkL2p} also hold for
vector-valued functions. 
\begin{lemma}
  There hold %exist constants $C$ such that
  \begin{equation}
    \| E_K \bm{v} \|_{L^2(B(\bm{x}_K, C_\Delta h_K))} \leq C \| \Pi_K
    \bm{v} \|_{L^2(K)}, \quad \forall \bm{v} \in L^2(K)^d, \quad
    \forall K \in \MTh,
    \label{eq_vEKL2}
  \end{equation}
  and
  \begin{equation}
    \| \nabla \times E_K \bm{v} \|_{L^2(B(\bm{x}_K, C_\Delta h_K))}
    \leq C \| \nabla \times \bm{v} \|_{L^2(K)}, \quad \forall \bm{v}
    \in \mb{P}_m(K)^d \quad \forall K \in \MTh.
    \label{eq_vEKcurlp}
  \end{equation}
  \label{le_vEK}
\end{lemma}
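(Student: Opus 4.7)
The plan is to reduce both inequalities to the scalar estimates already established in Lemma \ref{le_EkL2p}, by exploiting the columnwise definition of $E_K$ on vector-valued inputs given in \eqref{eq_vEK}.

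For \eqref{eq_vEKL2}, I would write $\bm v=(v_1,\dots,v_d)^{\top}$ with each $v_i\in L^2(K)$. By the definition \eqref{eq_vEK}, $E_K\bm v=(E_K v_1,\dots,E_K v_d)^{\top}$, so summing the scalar $L^2$ bound \eqref{eq_EkL2} over the $d$ components, together with the Pythagorean identity $\|\Pi_K\bm v\|_{L^2(K)}^2=\sum_{i=1}^d\|\Pi_K v_i\|_{L^2(K)}^2$, immediately delivers the desired estimate. No new ingredient is required.

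For \eqref{eq_vEKcurlp}, the key preliminary observation is that when $\bm v\in\mb{P}_m(K)^d$ we have $\Pi_K\bm v=\bm v$, and therefore $E_K\bm v$ is itself a polynomial in $\mb{P}_m(B(\bm x_K, C_\Delta h_K))^d$ whose restriction to $K$ equals $\bm v$. Consequently $\nabla\times E_K\bm v$ is a polynomial on the ball of total degree at most $m$ (applied componentwise in the appropriate sense), and its restriction to $K$ coincides with $\nabla\times\bm v$. With this polynomial identification in hand, I would replay verbatim the scaling argument used in the proof of Lemma \ref{le_EkL2p}: using $B(\bm x_K,\rho_K)\subset K$, the quasi-uniformity bound $h_K\le C_1\rho_K$, and the norm equivalence between $\|\cdot\|_{L^2(B(\bm x,C_\Delta C_1))}$ and $\|\cdot\|_{L^2(B(\bm x,1))}$ on the finite-dimensional space of polynomials of degree at most $m$, pulled back to the elemental scale via the affine map $B(\bm x_K,1)\to B(\bm x_K,\rho_K)$, one obtains
\[
\|\nabla\times E_K\bm v\|_{L^2(B(\bm x_K,C_\Delta h_K))}\le C\|\nabla\times E_K\bm v\|_{L^2(B(\bm x_K,\rho_K))}\le C\|\nabla\times E_K\bm v\|_{L^2(K)}=C\|\nabla\times\bm v\|_{L^2(K)},
\]
where the last equality uses $E_K\bm v|_K=\bm v$.

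I do not anticipate a serious obstacle. The only step needing a small amount of care is the identification of $\nabla\times E_K\bm v$ as a polynomial on the ball that extends $\nabla\times\bm v$ from $K$; once this is recorded, the rest is a finite-dimensional norm-equivalence and rescaling argument that mirrors exactly the proof of Lemma \ref{le_EkL2p}.
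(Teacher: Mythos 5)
Your proposal is correct and follows essentially the route the paper intends: the paper states that Lemma \ref{le_vEK} follows by applying $E_K$ columnwise and reusing Lemma \ref{le_EkL2p}, which is exactly your componentwise summation for \eqref{eq_vEKL2} and, for \eqref{eq_vEKcurlp}, your identification of $\nabla\times E_K\bm{v}$ as the polynomial extension of $\nabla\times\bm{v}$ followed by the same ball-to-element norm-equivalence and scaling argument used to prove \eqref{eq_EkL2}. No gap; the only point needing care (that the curl of the extension is the polynomial on the ball agreeing with $\nabla\times\bm{v}$ on $K$) is precisely the one you record.
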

%In addition, for any piecewise smooth
%function $\bm{v}$, we also simply write $E_K(\bm{v}|_K)$ as $E_K
%\bm{v}$. 
In addition, for any piecewise smooth scalar(or vector)-valued
function $q_h$, we simply write $E_K(q_h|_K)$ as $E_K q_h$.
% and $E_K(\bm{v}|_K)$ as $E_K \bm{v}$,
%respectively.

We define discontinuous/continuous piecewise polynomial spaces for
the partition $\MTh$, 
\begin{displaymath}
  \begin{aligned}
    \QhD &:= \{{v}_h \in L^2(\Oh) \ | \ {v}_h|_K \in \mb{P}_m(K), \
    \forall K \in \MTh \},  \quad
    \QhC := \QhD \cap H^1(\Oh), 
  \end{aligned}
\end{displaymath}
and we let the space $\Qh$ be either $\Qh := \QhD$ or $\Qh := \QhC$.
We then define the penalty operator $j_h(\cdot, \cdot)$ based on
$E_K$. The form $j_h(\cdot, \cdot)$ will be used to provide the
stabilization for the scalar-valued unknown. We note that this method
follows the idea of the ghost penalty method \cite{Burman2010ghost},
which extends the control of the relevant norms from the interior
domain to the entire computational domain \cite{Massing2019stabilized,
Burman2010ghost, Burman2012ficticious}.

We set $\qh := \Qh + H^1(\Omega)$, and define 
%for the space $\Qh$, we set
%$j_h(\cdot, \cdot)$ is given as 
\begin{equation}
  j_h(p_h, q_h) := \sum_{K \in \MThG} \int_{K} (p_h - E_{K^{\circ}}
  p_h)(q_h - E_{K^{\circ}} q_h) \d{x}, \quad \forall p_h, q_h \in \qh,
  \label{eq_jh}
\end{equation}
and  introduce the corresponding seminorm $\jnorm{\cdot}$ with
\begin{displaymath}
  \jnorm{q_h}^2 := j_h(q_h, q_h), \quad \forall q_h \in \qh.
\end{displaymath}
Then we show the following properties of $j_h(\cdot, \cdot)$, which
are instrumental in the error estimation. 
\begin{lemma}
  There holds %exist constants $C$ such that 
  \begin{equation}
    \| q_h \|_{L^2(\Ohc)}^2 + j_h(q_h, q_h) \leq C \| q_h
    \|_{L^2(\Oh)}^2 \leq C( \| q_h \|_{L^2(\Ohc)}^2 + j_h(q_h, q_h)),
    \quad \forall q_h \in \Qh.
    \label{eq_qhL2bound}
  \end{equation}
  \label{le_qhL2bound}
\end{lemma}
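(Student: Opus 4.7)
The plan is to handle the two inequalities separately, using Lemma \ref{le_EkL2p} on the extension operator $E_{K^\circ}$ together with a finite overlap argument supplied by Assumption \ref{as_mesh2} and the quasi-uniformity of $\MTh$.

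For the first inequality, the bound $\|q_h\|_{L^2(\Ohc)}^2 \leq \|q_h\|_{L^2(\Oh)}^2$ is immediate since $\Ohc \subset \Oh$. For the $j_h$ part, I would apply the triangle inequality on each term in \eqref{eq_jh},
\begin{displaymath}
\|q_h - E_{K^\circ} q_h\|_{L^2(K)}^2 \leq 2\|q_h\|_{L^2(K)}^2 + 2\|E_{K^\circ} q_h\|_{L^2(K)}^2,
\end{displaymath}
and observe that since $K \in \MThG$ satisfies $\overline{K} \cap \overline{K^\circ} \neq \varnothing$ and the mesh is quasi-uniform, we have $K \subset B(\bm{x}_{K^\circ}, C_\Delta h_{K^\circ})$. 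Then Lemma \ref{le_EkL2p} (used on $\pick[K^\circ] q_h \in \mb{P}_m$, with the $L^2$ bound \eqref{eq_EkL2}) gives $\|E_{K^\circ} q_h\|_{L^2(K)} \leq C \|q_h\|_{L^2(K^\circ)}$. Summing over $K \in \MThG$ and invoking the finite overlap property (each $K^\circ \in \MThc$ is assigned to only a bounded number of cut elements, by Assumption \ref{as_mesh2} and quasi-uniformity) yields $j_h(q_h, q_h) \leq C\|q_h\|_{L^2(\Oh)}^2$.

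For the second, harder inequality, the idea is to decompose $\Oh = \Ohc \cup (\bigcup_{K \in \MThG} K)$ so that
\begin{displaymath}
\|q_h\|_{L^2(\Oh)}^2 = \|q_h\|_{L^2(\Ohc)}^2 + \sum_{K \in \MThG}\|q_h\|_{L^2(K)}^2,
\end{displaymath}
and then to control the cut-element contributions by inserting $E_{K^\circ}q_h$:
\begin{displaymath}
\|q_h\|_{L^2(K)} \leq \|q_h - E_{K^\circ} q_h\|_{L^2(K)} + \|E_{K^\circ} q_h\|_{L^2(K)}.
\end{displaymath}
The first term on the right sums, by definition, to $\jnorm{q_h}$. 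The second term is again bounded via Lemma \ref{le_EkL2p} by $C\|q_h\|_{L^2(K^\circ)}$, and the finite overlap of the assignment $K \mapsto K^\circ$ lets me sum these into $C\|q_h\|_{L^2(\Ohc)}^2$.

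The only delicate point, and what I expect to be the main obstacle, is the finite overlap claim: one must verify that, thanks to quasi-uniformity and $K^\circ \in \Delta(K)$, at most $O(1)$ cut elements can share the same assigned interior neighbour $K^\circ$, so that $\sum_{K \in \MThG}\|q_h\|_{L^2(K^\circ)}^2 \leq C\|q_h\|_{L^2(\Ohc)}^2$ with a constant independent of how $\Gamma$ cuts the mesh. Once that combinatorial bound is in hand, combining the two estimates above gives $\|q_h\|_{L^2(\Oh)}^2 \leq C(\|q_h\|_{L^2(\Ohc)}^2 + \jnorm{q_h}^2)$, which completes the proof of \eqref{eq_qhL2bound}.
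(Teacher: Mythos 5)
Your proposal is correct and follows essentially the same route as the paper's proof: insert $E_{K^\circ}q_h$ via the triangle inequality on each cut element, invoke Lemma \ref{le_EkL2p} to bound $\|E_{K^\circ}q_h\|_{L^2(K)}$ by $C\|q_h\|_{L^2(K^\circ)}$, and sum over $\MThG$. The only difference is that you spell out the finite-overlap argument (each interior element $K^\circ$ serves at most $O(1)$ cut elements, by quasi-uniformity and Assumption \ref{as_mesh2}), which the paper leaves implicit in its final summation step; making it explicit is a reasonable refinement, not a different approach.
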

\begin{proof}
%  From the triangle inequality and Lemma \ref{le_EkL2p}, we have that
  %\begin{displaymath}
    %\begin{aligned}
      %j_h(q_h, q_h)  & \leq C\sum_{K \in \MThG} ( \|q_h \|_{L^2(K)}^2 +
      %\| E_{K^\circ} q_h \|_{L^2(K)}^2)  \leq C\sum_{K \in \MThG} (
      %\|q_h \|_{L^2(K)}^2 + \|  q_h \|_{L^2(K^{\circ})}^2) \\
      %& \leq C \| q_h \|_{L^2(\Ohc)}^2, \\
    %\end{aligned}
  %\end{displaymath}
  %which gives the first estimate in \eqref{eq_qhL2bound}. Again, by
  Applying the triangle inequality immediately gives $j_h(q_h, q_h)
  \leq C \| q_h \|_{L^2(\Oh)}^2$. By Lemma \ref{le_EkL2p}, for any $K
  \in \MThG$, we obtain that
  \begin{displaymath}
    \begin{aligned}
      \|q_h \|_{L^2(K)}^2 & \leq C (\|q_h - E_{K^\circ} q_h
      \|_{L^2(K)}^2 +  \|E_{K^\circ} q_h \|_{L^2(K)}^2 ) \leq C (\|q_h
      - E_{K^\circ} q_h \|_{L^2(K)}^2 +  \|q_h \|_{L^2(K^{\circ})}^2
      ).
    \end{aligned}
  \end{displaymath}
  Summation over all elements in $\MThG$ immediately gives the desired
  estimate, which completes the proof.
\end{proof}
\begin{lemma}
  There holds %exists a constant $C$ such that 
  \begin{equation}
    \jnorm{v} \leq Ch^s \|v\|_{H^s(\Omega^*)}, \quad \forall v \in
    H^t(\Omega^*), 
    \label{eq_jnormv}
  \end{equation}
  where $s = \min(t, m), t \geq 1$.
  \label{le_jnormv}
\end{lemma}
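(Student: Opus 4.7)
The plan is to localize the estimate to each cut element $K\in\MThG$, introduce a polynomial approximation inside the ball $B_{K^\circ}:=B(\bm{x}_{K^\circ},C_\Delta h_{K^\circ})$ that contains both $K$ and its assigned interior neighbor $K^\circ$, and then invoke the Bramble--Hilbert lemma on that ball.

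First I would unfold the seminorm as
\[
\jnorm{v}^2=\sum_{K\in\MThG}\|v-E_{K^\circ}v\|_{L^2(K)}^2.
\]
Fix $K\in\MThG$. The key observation is that for every $p\in\mb{P}_m(B_{K^\circ})$ one has $E_{K^\circ}p=p$ on the whole ball (because $\pick[K^\circ]p=p$ and $E_{K^\circ}$ is the direct polynomial extension); hence on $K\subset B_{K^\circ}$,
\[
v-E_{K^\circ}v=(v-p)-E_{K^\circ}(v-p).
\]
Using the triangle inequality and then estimate \eqref{eq_EkL2} from Lemma \ref{le_EkL2p} applied to $v-p\in L^2(K^\circ)$, I obtain
\[
\|v-E_{K^\circ}v\|_{L^2(K)}\le\|v-p\|_{L^2(K)}+\|E_{K^\circ}(v-p)\|_{L^2(B_{K^\circ})}\le C\|v-p\|_{L^2(B_{K^\circ})},
\]
since $K\cup K^\circ\subset B_{K^\circ}$.

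Next I would pick $p$ to be the best $L^2$-approximation of $v$ in $\mb{P}_m(B_{K^\circ})$. The ball $B_{K^\circ}$ has diameter $\sim h$ and is contained in $\Omega^*$ by the assumption stated at the end of the preliminaries. Since $s=\min(t,m)\le m<m+1$, a scaled Bramble--Hilbert / Deny--Lions estimate on $B_{K^\circ}$ yields
\[
\|v-p\|_{L^2(B_{K^\circ})}\le C h^s\,\|v\|_{H^s(B_{K^\circ})},
\]
valid for integer $s$ directly, and for non-integer $s\in[1,m]$ via standard Sobolev interpolation between the integer endpoints (this is where the hypothesis $t\ge 1$ plays its role). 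Squaring and summing over $K\in\MThG$ gives
\[
\jnorm{v}^2\le Ch^{2s}\sum_{K\in\MThG}\|v\|_{H^s(B_{K^\circ})}^2.
\]

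The last step, and the only mildly delicate one, is a finite-overlap argument: by the quasi-uniformity of $\MTh^*$ and the fact that $\Delta(K)\subset B(\bm{x}_K,C_\Delta h_K)$, the number of cut elements $K\in\MThG$ whose associated ball $B_{K^\circ}$ covers a given point of $\Omega^*$ is bounded by a constant depending only on $d$ and the shape-regularity constant. Therefore
\[
\sum_{K\in\MThG}\|v\|_{H^s(B_{K^\circ})}^2\le C\|v\|_{H^s(\Omega^*)}^2,
\]
and combining with the previous display concludes the proof. The main obstacle I anticipate is this combinatorial overlap bound: it has to be phrased carefully using quasi-uniformity so that the constant $C$ is independent of both $h$ and how $\Gamma$ meets the mesh, but no new ideas beyond those already in the preliminaries are needed.
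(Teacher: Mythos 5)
Your proposal is correct and takes essentially the same route as the paper's proof: pick a polynomial approximant of $v$ on a ball containing both $K$ and $K^{\circ}$, use the $L^2$-stability of the direct extension from Lemma \ref{le_EkL2p} together with a triangle inequality, and sum over the cut elements with a bounded-overlap argument. The only cosmetic difference is that you apply $E_{K^{\circ}}$ directly to $v-p$ (using $E_{K^{\circ}}p=p$), whereas the paper adds and subtracts $\Pi_{K^{\circ}}v$ and handles two approximation terms; both variants are fine.
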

\begin{proof}
  For any element $K \in \MThG$, there exists $p \in
  \mb{P}_m(B(\bm{x}_K, C_\Delta h_K))$ such that 
  \begin{displaymath}
    \| v - p \|_{L^2(K)} \leq C h^s \|v\|_{H^s(B(\bm{x}_K, C_\Delta
    h_K))}.
  \end{displaymath}
  Therefore, we have that
  \begin{displaymath}
    \begin{aligned}
      \|v - E_{K^\circ} v \|_{L^2(K)} \leq  \| v - p \|_{L^2(K)}
      +  \|p - E_{K^\circ} v \|_{L^2(K)} = \| v - p \|_{L^2(K)} + \|
      E_{K^{\circ}}( p -  v) \|_{L^2(K)},
    \end{aligned}
  \end{displaymath}
  and from Lemma \ref{le_EkL2p}, we find that
  \begin{displaymath}
    \begin{aligned}
     \| E_{K^{\circ}}( p -  v)
      \|_{L^2(K)} \leq C \| p - \Pi_{K^{\circ}} v \|_{L^2(K^{\circ})}
      \leq C (\| v - p \|_{L^2(K^{\circ})} + \|  v - \Pi_{K^{\circ}} v
      \|_{L^2(K^{\circ})}). 
    \end{aligned}
  \end{displaymath}
  Combing the approximation properties of $p$ and $\Pi_{K^{\circ}} v$,
  we arrive at the estimate \eqref{eq_jnormv}. This completes the
  proof.
\end{proof}
For the vector-valued unknown, we still define the 
discontinuous/continuous piecewise polynomial spaces on the partition
$\MTh$,
\begin{displaymath}
  \VhD :=  \{ \bm{v}_h \in L^2(\Oh)^d \ | \ \bm{v}_h|_K \in
  \mb{P}_m(K)^d, \ \forall K \in \MTh \}, \quad \VhC := \VhC \cap
  H^1(\Oh),
\end{displaymath}
and we let $\Vh$ be either $\Vh := \VhD$ or $\Vh := \VhC$. For the
space $\vh := \Vh + H^2(\Omega)^d$, we define the bilinear form
$s_h(\cdot, \cdot)$ as 
\begin{equation}
  g_h(\bm{u}_h, \bm{v}_h) := \sum_{K \in \MThG} \int_K (\nabla
  \times (\bm{u}_{h} -E_{K^{\circ}} \bm{u}_{h})) \cdot (\nabla
  \times (\bm{v}_{h} -E_{K^{\circ}} \bm{v}_{h})) \d{x}, \quad \forall
  \bm{u}_h, \bm{v}_h \in \vh,
  \label{eq_gh}
\end{equation}
and   define the corresponding seminorm $\gnorm{\cdot}$ as
\begin{displaymath}
  \gnorm{\bm{v}_h}^2 := g_h(\bm{v}_h, \bm{v}_h), \quad \forall
  \bm{v}_h \in \vh.
\end{displaymath}
We also give the following properties of $g_h(\cdot, \cdot)$.
\begin{lemma}
  %There exist constants $C$ such that
  There holds
  \begin{equation}
    \begin{aligned}
      C \big( \sum_{K \in \MTh} \|\nabla \times \bm{v}_h
      \|_{L^2(K)}^2 + &\gnorm{\bm{v}_h}^2 \big) \leq \sum_{K \in
      \MTh} \|\nabla \times\bm{v}_{h} \|_{L^2(K)}^2 \\ 
      \leq C & \big( \sum_{K \in \MTh} \|\nabla
      \times \bm{v}_h \|_{L^2(K)}^2 + \gnorm{\bm{v}_h}^2
      \big), \quad \forall \bm{v}_h \in \Vh.  \\
    \end{aligned}
    \label{eq_vhcurlbound}
  \end{equation}
  \label{le_vhcurlbound}
\end{lemma}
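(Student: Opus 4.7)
The plan is to mirror, step for step, the argument used in Lemma \ref{le_qhL2bound}, replacing the $L^2$-stability \eqref{eq_EkL2p} of $E_K$ by the $\curl$-stability \eqref{eq_vEKcurlp} of Lemma \ref{le_vEK}. The statement really ought to be the curl-analogue of \eqref{eq_qhL2bound}, with the interior sum $\sum_{K\in\MThc}$ on the outside and the full-mesh sum $\sum_{K\in\MTh}$ in the middle, so I will prove both inequalities in that form. The key observation is that each $\bm{v}_h|_{K^\circ}$ lies in $\mb{P}_m(K^\circ)^d$, so $E_{K^\circ}\bm{v}_h$ is simply the polynomial extension to $B(\bm{x}_{K^\circ},C_\Delta h_{K^\circ})$, and Assumption \ref{as_mesh2} together with quasi-uniformity guarantees $K\subset B(\bm{x}_{K^\circ},C_\Delta h_{K^\circ})$ whenever $K^\circ$ is the interior element assigned to $K\in\MThG$.

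For the upper bound (lower direction of the inequality), I would start from the trivial embedding $\sum_{K\in\MThc}\|\nabla\times\bm{v}_h\|_{L^2(K)}^2\le\sum_{K\in\MTh}\|\nabla\times\bm{v}_h\|_{L^2(K)}^2$ and then bound $\gnorm{\bm{v}_h}^2$. For each $K\in\MThG$, apply the triangle inequality to split
\[
\|\nabla\times(\bm{v}_h-E_{K^\circ}\bm{v}_h)\|_{L^2(K)}^2\le 2\|\nabla\times\bm{v}_h\|_{L^2(K)}^2+2\|\nabla\times E_{K^\circ}\bm{v}_h\|_{L^2(K)}^2.
\]
The second term is controlled by \eqref{eq_vEKcurlp} once we note $K\subset B(\bm{x}_{K^\circ},C_\Delta h_{K^\circ})$, giving $\|\nabla\times E_{K^\circ}\bm{v}_h\|_{L^2(K)}\le C\|\nabla\times\bm{v}_h\|_{L^2(K^\circ)}$. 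Summing over $K\in\MThG$ and using that by quasi-uniformity each interior element $K^\circ$ is assigned to at most a bounded number of cut elements, both contributions are bounded by $\sum_{K\in\MTh}\|\nabla\times\bm{v}_h\|_{L^2(K)}^2$.

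For the lower bound (upper direction of the inequality), the interior sum already covers $\MThc$, so it suffices to control $\sum_{K\in\MThG}\|\nabla\times\bm{v}_h\|_{L^2(K)}^2$. Writing $\bm{v}_h=(\bm{v}_h-E_{K^\circ}\bm{v}_h)+E_{K^\circ}\bm{v}_h$ on each cut element and applying the triangle inequality yields
\[
\|\nabla\times\bm{v}_h\|_{L^2(K)}^2\le 2\|\nabla\times(\bm{v}_h-E_{K^\circ}\bm{v}_h)\|_{L^2(K)}^2+2\|\nabla\times E_{K^\circ}\bm{v}_h\|_{L^2(K)}^2.
\]
The first term is exactly the local contribution to $\gnorm{\bm{v}_h}^2$, and the second is bounded via \eqref{eq_vEKcurlp} by $C\|\nabla\times\bm{v}_h\|_{L^2(K^\circ)}^2\le C\sum_{K'\in\MThc}\|\nabla\times\bm{v}_h\|_{L^2(K')}^2$. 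Summing and again invoking the finite-overlap property for the map $K\mapsto K^\circ$ closes the estimate.

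The main (and essentially only) obstacle is the second step, namely justifying $\|\nabla\times E_{K^\circ}\bm{v}_h\|_{L^2(K)}\le C\|\nabla\times\bm{v}_h\|_{L^2(K^\circ)}$ with a constant independent of the cut. This reduces to checking the geometric inclusion $K\subset B(\bm{x}_{K^\circ},C_\Delta h_{K^\circ})$: since Assumption \ref{as_mesh2} gives $\overline{K^\circ}\cap\overline{K}\neq\varnothing$ with $K^\circ\in\MThc$, quasi-uniformity provides $\mathrm{dist}(\bm{x}_{K^\circ},K)\le C h_{K^\circ}$, so enlarging $C_\Delta$ if necessary makes the inclusion hold and \eqref{eq_vEKcurlp} directly applicable. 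Everything else is triangle inequality and bookkeeping.
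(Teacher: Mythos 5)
Your proposal is correct and is essentially the paper's own argument: the paper proves this lemma by simply remarking that it follows from the triangle inequality and Lemma \ref{le_vEK}, mirroring the proof of Lemma \ref{le_qhL2bound}, which is exactly what you carry out (with the extra, correct, bookkeeping of the inclusion $K\subset B(\bm{x}_{K^\circ},C_\Delta h_{K^\circ})$ and the bounded multiplicity of the map $K\mapsto K^\circ$). You also rightly read the printed inequality as a typo and prove the intended analogue of \eqref{eq_qhL2bound}, which is the form actually used later (e.g.\ in Lemma \ref{le_aAnorm}).
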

\begin{lemma}
  There holds %exists a constant $C$ such that 
  \begin{equation}
    \gnorm{ \bm{v} } \leq C h^s \| \bm{v} \|_{H^{s+1}(\Omega^*)},
    \quad \forall \bm{v} \in H^t(\Omega^*)^d,
    \label{eq_gnormv}
  \end{equation}
  where $s = \min(t - 1, r), t \geq 2$.
  \label{le_gnormv}
\end{lemma}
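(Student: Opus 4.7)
The plan is to adapt the elementwise template of Lemma~\ref{le_jnormv}, replacing $L^2$-norms by $L^2$-norms of the curl. Fix $K \in \MThG$ and set $B := B(\bm{x}_K, C_\Delta h_K)$. By Assumption~\ref{as_mesh2} we have $K^\circ \in \Delta(K)$, so $K \cup K^\circ \subset B \subset \Omega^*$ by the standing geometric assumption. Standard polynomial approximation (Bramble--Hilbert on the rescaled unit ball) furnishes $\bm{p} \in \mb{P}_m(B)^d$ with
\[
\|\bm{v} - \bm{p}\|_{L^2(B)} + h\,|\bm{v} - \bm{p}|_{H^1(B)} \leq C h^{s+1} \|\bm{v}\|_{H^{s+1}(B)}, \qquad s = \min(t-1,m),
\]
where the hypothesis $t \geq 2$ ensures $s \geq 1$, in particular that the approximation is nontrivial and $\nabla \times \bm{v}$ is meaningful on $K$.

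I then decompose $\bm{v} - E_{K^\circ}\bm{v} = (\bm{v}-\bm{p}) + (\bm{p} - E_{K^\circ}\bm{v})$. The key identity, paralleling the scalar case, is that since $\bm{p}|_{K^\circ} \in \mb{P}_m(K^\circ)^d$ is its own $L^2$-projection, $E_{K^\circ}\bm{p}$ coincides with $\bm{p}$ as a polynomial on the ball; consequently $\bm{p} - E_{K^\circ}\bm{v} = E_{K^\circ}(\bm{p} - \bm{v})$, a polynomial of global degree $\leq m$. The first piece is easy: $\|\nabla \times (\bm{v}-\bm{p})\|_{L^2(K)} \leq |\bm{v}-\bm{p}|_{H^1(B)} \leq C h^s \|\bm{v}\|_{H^{s+1}(B)}$.

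The second piece is the crux. One cannot apply the curl-stability~\eqref{eq_vEKcurlp} directly, since its hypothesis requires a polynomial argument on $K^\circ$, which $\bm{p}-\bm{v}$ is not. Instead, exploiting that $E_{K^\circ}(\bm{p}-\bm{v})$ is itself a polynomial of degree $\leq m$, I apply a standard inverse estimate on the shape-regular element $K$, followed by the inclusion $K \subset B(\bm{x}_{K^\circ},C_\Delta h_{K^\circ})$, the $L^2$-stability~\eqref{eq_vEKL2}, and the approximation bound:
\[
\|\nabla \times E_{K^\circ}(\bm{p}-\bm{v})\|_{L^2(K)} \leq C h^{-1}\|E_{K^\circ}(\bm{p}-\bm{v})\|_{L^2(B(\bm{x}_{K^\circ},C_\Delta h_{K^\circ}))} \leq C h^{-1}\|\bm{p}-\bm{v}\|_{L^2(K^\circ)} \leq C h^s \|\bm{v}\|_{H^{s+1}(B)}.
\]
Squaring both contributions, summing over $K \in \MThG$, and using the finite-overlap property of the family $\{B(\bm{x}_K,C_\Delta h_K)\}_K$ (a consequence of quasi-uniformity of $\MTh^*$) together with $B \subset \Omega^*$ yields~\eqref{eq_gnormv}. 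The main conceptual obstacle is precisely this substitution of the unavailable curl-stability by an inverse-estimate-plus-$L^2$-stability trick; everything else is a routine adaptation of the scalar proof.
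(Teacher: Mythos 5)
Your proof is correct, and it follows the same elementwise template the paper itself points to (the paper does not write this proof out; it only refers to the proof of Lemma~\ref{le_jnormv} together with Lemma~\ref{le_vEK}): a local polynomial approximant $\bm{p}$ on the ball, the triangle inequality, and the identity $\bm{p}-E_{K^{\circ}}\bm{v}=E_{K^{\circ}}(\bm{p}-\bm{v})$. Where you genuinely deviate is in the treatment of the extension term. The direct vector analogue of the scalar argument is available: since $E_{K^{\circ}}(\bm{p}-\bm{v})=E_{K^{\circ}}(\bm{p}-\Pi_{K^{\circ}}\bm{v})$ and $\bm{p}-\Pi_{K^{\circ}}\bm{v}$ restricted to $K^{\circ}$ is a polynomial, the curl-stability \eqref{eq_vEKcurlp} \emph{does} apply (so your claim that it cannot be used is overstated), at the price of invoking an $H^1$-type approximation property of the local $L^2$ projection $\Pi_{K^{\circ}}$ to bound $\|\nabla\times(\bm{v}-\Pi_{K^{\circ}}\bm{v})\|_{L^2(K^{\circ})}$. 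You instead apply an inverse inequality on $K$ followed by the $L^2$-stability \eqref{eq_vEKL2}, trading the factor $h^{-1}$ against the $O(h^{s+1})$ $L^2$ approximation error of $\bm{p}$; this avoids any curl/$H^1$ property of $\Pi_{K^{\circ}}$ or of $E_{K^{\circ}}$ and is equally valid, the remaining ingredients (inclusions $K\subset B(\bm{x}_{K^{\circ}},C_\Delta h_{K^{\circ}})$, $K^{\circ}\subset B(\bm{x}_K,C_\Delta h_K)\subset\Omega^*$, finite overlap of the balls) being correctly accounted for. Two minor remarks: you state the exponent with $m$ ($s=\min(t-1,m)$) while the lemma uses $r$; this mismatch is inherited from the paper's own notation for the vector-valued extension (compare \eqref{eq_vEK} with the proof of Lemma~\ref{le_Anormapp}), and your argument is degree-agnostic, so it yields the stated bound once $\bm{p}$ is taken of the same degree as the extension space. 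Also, $t\geq 2$ guarantees $s\geq 1$ only when that degree is at least one, but this plays no role in the estimate itself.
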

Lemma \ref{le_vhcurlbound} and Lemma \ref{le_gnormv} also follow from 
the triangle inequality and Lemma \ref{le_vEK}; see 
proofs of Lemma \ref{le_qhL2bound} and Lemma \ref{le_jnormv}.

We end this section by giving some basic results for unfitted methods.
The first is the trace estimate on the curve
\cite{Hansbo2002discontinuous, Huang2017unfitted}:
\begin{lemma}
  There exists a constant $h_0$, independent of $h$, such that for any
  $h \leq h_0$, there holds
  \begin{equation}
    \|w \|_{L^2(\Gamma_K)}^2 \leq C \left( h_K^{-1} \|w \|_{L^2(K)}^2
    + h_K \|w \|_{H^1(K)}^2 \right), \quad \forall w \in H^1(K), \quad
    \forall K \in \MThG.
    \label{eq_interfaceH1trace}
  \end{equation}
  \label{le_interfaceH1trace}
\end{lemma}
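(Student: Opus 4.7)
\textbf{Proof plan for Lemma \ref{le_interfaceH1trace}.} The strategy is a divergence theorem argument combined with local flattening of $\Gamma_K$ made possible by the $C^2$ regularity of $\Gamma$. I would first reduce to a scale-free setting via the affine map $\bm{x} = \bm{x}_K + h_K \hat{\bm{x}}$, which sends $K$ to a reference element $\hat K$ of unit diameter and $\Gamma_K$ to a piece $\hat\Gamma_K \subset \hat K$. Under this scaling the principal curvatures of $\hat\Gamma_K$ are bounded by $h_K$ times the curvature bound of $\Gamma$, so for $h \leq h_0$ with $h_0$ sufficiently small (but independent of how $\Gamma$ cuts the mesh) $\hat\Gamma_K$ is a $C^2$ graph over some hyperplane with uniformly bounded $C^1$ norm, and in particular its surface measure is uniformly bounded.

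Next, in the reference configuration, I would pick a smooth vector field $\bm{\psi}$ defined on a neighborhood of $\hat K$ such that $\bm{\psi}\cdot\hat{\un}_\Gamma \geq c_0 > 0$ on $\hat\Gamma_K$ with $\|\bm{\psi}\|_{W^{1,\infty}}$ bounded independently of $K$; concretely, $\bm{\psi}$ may be taken to be a smooth extension of the unit normal field of $\Gamma$ near the portion of $\Gamma$ passing through $K$, which exists with uniform bounds by $C^2$-smoothness. Applying the divergence theorem to $\hat w^2 \bm{\psi}$ on $\hat K^0 = \hat K \cap \hat\Omega$ gives
\begin{equation*}
 c_0 \int_{\hat\Gamma_K} \hat w^2 \, d\hat s \;\leq\; \int_{\hat\Gamma_K} \hat w^2\,\bm{\psi}\cdot\hat{\un}\,d\hat s \;=\; \int_{\hat K^0} \mathrm{div}(\hat w^2 \bm{\psi})\,d\hat{\bm{x}} - \int_{\partial \hat K^0 \setminus \hat\Gamma_K} \hat w^2 \,\bm{\psi}\cdot\hat{\un}\,d\hat s.
\end{equation*}
The volume integral is bounded by $C(\|\hat w\|_{L^2(\hat K)}^2 + \|\hat w\|_{L^2(\hat K)} \|\hat\nabla \hat w\|_{L^2(\hat K)})$ via Cauchy--Schwarz, and the boundary remainder is supported on planar pieces of $\partial \hat K$, where the classical trace inequality on flat faces applies and gives a bound of the same form.

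Finally, I would scale back: the surface measure on $\Gamma_K$ carries a factor $h_K^{d-1}$, the volume measure on $K$ carries $h_K^d$, and $\hat\nabla = h_K \nabla$. Combining these factors and applying Young's inequality to the mixed term $\|w\|_{L^2(K)}\|\nabla w\|_{L^2(K)}$ produces the asserted estimate. The main obstacle is the uniform construction of $\bm{\psi}$ with constants independent of the relative position of $\Gamma$ and the mesh; this is precisely the place where the $C^2$-smoothness of $\Gamma$ together with Assumption \ref{as_mesh1} is used to guarantee, for $h \leq h_0$ small enough, that $\Gamma_K$ is a single uniformly $C^1$-bounded graph admitting a normal extension with bounded Lipschitz seminorm.
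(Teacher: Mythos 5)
Your plan is sound, but note that the paper does not prove Lemma \ref{le_interfaceH1trace} at all: it is quoted as a known result with a citation to Hansbo--Hansbo and Huang et al., so there is no in-paper argument to match. Your route is one of the two standard ways to establish this cut-element trace estimate. The cited references argue via a local graph representation of $\Gamma$ and a one-dimensional trace/integration argument along directions transversal to the (locally flattened) interface; you instead use the Gauss--Green identity on $\hat K^0=\hat K\cap\hat\Omega$ with a vector field $\bm{\psi}$ transversal to $\Gamma$. Both hinge on exactly the same uniformity input, which you correctly identify: the $C^2$-smoothness of $\Gamma$ gives a tubular neighbourhood of uniform width in which the unit (outward) normal extends with a Lipschitz bound depending only on $\Gamma$, so $\bm{\psi}$ and the constant $c_0$ are independent of how $\Gamma$ cuts the element; your scaling bookkeeping ($h_K^{d-1}$ on the surface, $h_K^{d}$ in the volume, $\hat\nabla=h_K\nabla$, then Young's inequality) is correct and reproduces the asserted $h_K^{-1}/h_K$ weights. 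Two small points deserve a sentence each in a full write-up: (i) take $\bm{\psi}$ to be the extension of the \emph{outward} normal of $\Omega$ (constant along normal lines), so that $\bm{\psi}\cdot\un=1$ exactly on $\Gamma_K$ and the sign of the $\Gamma_K$-term is the right one after orienting $\un$ as the outward normal of $K^0$; (ii) justify the divergence theorem on $K^0$, e.g.\ by noting that $K\cap\Omega$ is a set of finite perimeter with boundary $\Gamma_K\cup(\partial K\cap\overline\Omega)$ and that $w^2\bm{\psi}\in W^{1,1}(K)$ for $w\in H^1(K)$, the flat-face remainder then being controlled by the standard scaled trace inequality on full faces of $K$ (restriction to $\partial K^0\setminus\Gamma_K$ only decreases the nonnegative integrand). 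Assumption \ref{as_mesh1} is not actually needed for this lemma; only the smallness condition $h\le h_0$ ensuring $K\subset$ the tubular neighbourhood is.
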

Hereafter, the condition $h \leq h_0$ is assumed to be always
fulfilled. In the error estimation, we require the Sobolev extension
theory \cite{Adams2003sobolev}: there exists an extension operator
$\Es: H^s(\Omega) \rightarrow H^s(\Omega^*)(s \geq
0)$ such that 
\begin{equation} 
  (\Es w)|_{\Omega} = w, \quad \|\Es w
  \|_{H^q(\Omega^*)} \leq C \|w \|_{H^q(\Omega)}, \quad 0 \leq
  q \leq s, \quad \forall w \in H^s(\Omega).
  \label{eq_Eextension}
\end{equation}

% vim:spell:tw=70:fo+=Mn:cc=70
\section{Numerical Scheme to the Maxwell Problem in Smooth Domain}
\label{sec_scheme}
In this section, we are concerned with the time-harmonic Maxwell
problem defined on the smooth domain $\Omega$, which seeks the vector
field $\bm{u}$ and the scalar unknown (Lagrangian multiplier) $p$ such
that 
\begin{equation}
  \begin{aligned}
    \nabla \times (\coec \nabla \times \bm{u}) - \coek \coed \bm{u}
    - \coed \nabla p &= \bm{j}, && \text{in } \Omega, \\ 
    \nabla \cdot (\coed \bm{u}) &= 0, &&\text{in } \Omega, \\
    p  = 0, \quad \un \times \bm{u} &= \bm{g}, && \text{on } \Gamma, \\
  \end{aligned}
  \label{eq_Maxwell}
\end{equation}
where  $\bm{j}$ is the external source field and $\bm{g}$ is the
prescribed tangential trace. The magnetic permeability $\mu_r$ and the
electric permittivity $\coed$ are assumed to be $C^2$-smooth and
satisfy 
\begin{equation}
  0 < \mu_* \leq \mu(\bm{x}) \leq \mu^*, \quad 0 < \varepsilon_* \leq
  \varepsilon(\bm{x}) \leq \varepsilon^*, \quad \forall \bm{x} \in
  \overline{\Omega}.
  \label{eq_mue}
\end{equation}
We first show the well-posedness of the problem \eqref{eq_Maxwell}.
\begin{theorem}
  Assume that $k$ is not a Maxwell eigenvalue, then for
  $\bm{j} \in L^2(\Omega)^d$ and the boundary data
  $\bm{g} = \bm{0}$, the problem \eqref{eq_Maxwell} admits a unique
  solution $(\bm{u}, p) \in H^2(\Omega)^d \times H^1(\Omega)$ such
  that 
  \begin{equation}
    \|\bm{u} \|_{H^2(\Omega)} \leq \Cr \| \bm{j} \|_{L^2(\Omega)},
    \quad \|p \|_{H^1(\Omega)} \leq C  \| \bm{j}
    \|_{L^2(\Omega)},
    \label{eq_up}
  \end{equation}
  where $\Cr$ depends on $k$.
  \label{th_regularity}
\end{theorem}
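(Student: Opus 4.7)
The plan is to argue in two stages, first determining the Lagrange multiplier $p$ from the divergence constraint and then reducing the system to a pure Maxwell equation for $\bm{u}$ whose well-posedness follows from the non-resonance assumption together with classical regularity theory.

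First I would pass to the natural mixed variational formulation: seek $(\bm{u},p)\in H_0(\curl,\Omega)\times H^1_0(\Omega)$ satisfying
\begin{equation*}
  (\coec\,\nabla\times\bm{u},\nabla\times\bm{v}) - \coek(\coed\bm{u},\bm{v}) - (\coed\nabla p,\bm{v}) = (\bm{j},\bm{v}),\qquad (\coed\bm{u},\nabla q)=0,
\end{equation*}
for all admissible test functions. Taking $\bm{v}=\nabla p$ in the first equation (a legitimate test since $\nabla p\in H_0(\curl,\Omega)$ with vanishing curl) and combining with the constraint evaluated at $q=p$ kills the first two terms and leaves $(\coed\nabla p,\nabla p) = -(\bm{j},\nabla p)$. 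This identifies $p$ as the unique weak solution of the scalar elliptic problem $-\nabla\cdot(\coed\nabla p) = \nabla\cdot\bm{j}$ with $p|_\Gamma=0$. Coercivity from \eqref{eq_mue}, Cauchy–Schwarz, and Poincaré then deliver $\|p\|_{H^1(\Omega)}\le C\|\bm{j}\|_{L^2(\Omega)}$ with a constant independent of $k$.

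Next I would substitute the now-known $p$ back into the first equation, setting $\bm{f}:=\bm{j}+\coed\nabla p\in L^2(\Omega)^d$, and study the reduced problem
\begin{equation*}
  \nabla\times(\coec\nabla\times\bm{u}) - \coek\coed\bm{u} = \bm{f},\qquad \nabla\cdot(\coed\bm{u})=0,\qquad \un\times\bm{u}|_\Gamma = \bm{0}.
\end{equation*}
By the standard Helmholtz-type decomposition adapted to the weighted divergence, $H_0(\curl,\Omega)$ splits into $\nabla H^1_0(\Omega)$ and its $\coed$-orthogonal complement $X_N:=\{\bm{v}\in H_0(\curl,\Omega):(\coed\bm{v},\nabla q)=0\ \forall q\in H^1_0\}$; the constraint restricts $\bm{u}$ to $X_N$, which embeds compactly into $L^2(\Omega)^d$ since $\Omega$ is smooth and simply connected (or by the Weber/Weck compactness theorem under the given regularity of $\coed$). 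Consequently the sesquilinear form $a(\bm{u},\bm{v})-\coek(\coed\bm{u},\bm{v})$ on $X_N$ defines a Fredholm operator of index zero, and the assumption that $k$ is not a Maxwell eigenvalue rules out a nontrivial kernel, giving existence, uniqueness, and the stability bound $\|\bm{u}\|_{H(\curl)}\le C(k)\|\bm{f}\|_{L^2}$.

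Finally, to upgrade to $H^2$ I would invoke the Costabel–Dauge type regularity for Maxwell systems on $C^2$ domains with $C^2$ coefficients satisfying \eqref{eq_mue}: a divergence-free field in $X_N$ with $L^2$ curl-curl belongs to $H^1$, and the equation $\nabla\times(\coec\nabla\times\bm{u}) = \bm{f}+\coek\coed\bm{u}\in L^2$ combined with the embedded-boundary regularity lifts $\bm{u}$ to $H^2(\Omega)^d$ with $\|\bm{u}\|_{H^2(\Omega)} \le C(\|\bm{f}\|_{L^2}+\|\bm{u}\|_{L^2})$. Absorbing $\|\bm{u}\|_{L^2}$ via the previous Fredholm bound and $\|\bm{f}\|_{L^2}\le\|\bm{j}\|_{L^2}+\varepsilon^*\|\nabla p\|_{L^2}\le C\|\bm{j}\|_{L^2}$ yields \eqref{eq_up} with a constant $\Cr$ that inherits the $k$-dependence from the Fredholm step while the bound on $p$ remains $k$-independent.

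The main obstacle is the $H^2$ regularity step: on unfitted smooth domains one cannot appeal to edge-element shift theorems but must instead use the decomposition $\bm{u}=\nabla\phi+\bm{w}$ with $\bm{w}\in H^1$ divergence-free to convert the curl-curl equation into an elliptic system, and track how the regularity constants absorb the lower-order $\coek\coed\bm{u}$ term — this is where the explicit dependence of $\Cr$ on $k$ is hidden and must be handled carefully via the non-resonance hypothesis rather than by a direct energy estimate.
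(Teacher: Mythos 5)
Your proposal is correct and follows essentially the same route as the paper: you decouple $p$ as the solution of a scalar elliptic Dirichlet problem (by testing with gradients, where the paper uses a Helmholtz decomposition of $\bm{j}$ — the same computation), reduce to a Maxwell system with divergence-free right-hand side whose well-posedness under the non-resonance assumption the paper takes from Perugia--Sch\"otzau and Girault--Raviart, and lift to $H^2$ via the regularity of fields with $H^1$ curl and divergence and vanishing tangential trace on a $C^2$ domain (the paper cites Amrouche--Bernardi--Dauge--Girault, Cor.~2.15, where you invoke Costabel--Dauge-type results). The $k$-dependence enters exactly as you describe, through the stability constant of the reduced curl-curl problem, while the bound on $p$ remains $k$-independent, matching \eqref{eq_up}.
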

\begin{proof}
  From the Helmholtz decomposition \cite{Girault1986finite}, $\bm{j}
  \in L^2(\Omega)^d$ can be decomposed as $\bm{j} = \bm{J} + \nabla
  q$, where $\bm{J}$ is a divergence-free field and $q \in
  H_0^1(\Omega)$ is the solution to the problem $ \Delta q = \nabla
  \cdot \bm{j}$ in $ \Omega$ such that $\|q \|_{H^1(\Omega)} \leq C
  \|\bm{j} \|_{L^2(\Omega)}$.  Thanks to the orthogonality of the
  Helmholtz decomposition,  $p$ is the solution of the elliptic
  problem  
  \begin{displaymath}
    -\nabla \cdot (\coed \nabla p) = -\Delta q \quad \text{in }
    \Omega, \quad p = 0 \quad \text{on } \Gamma. 
  \end{displaymath}
  By the condition \eqref{eq_mue}, we know that $p \in H_0^1(\Omega)$
  with $\|p\|_{H^1(\Omega)} \leq C \|q\|_{H^1(\Omega)} \leq C \|\bm{j}
  \|_{L^2(\Omega)}$.  Further, $\bm{u}$ is the solution of the
  system
  \begin{displaymath}
    \begin{aligned}
      \nabla \times (\coec \nabla \times \bm{u}) - \coek \coed \bm{u}
      & = \bm{J}, && \text{in } \Omega, \\
      \nabla \cdot (\coed \bm{u}) &= 0, &&\text{in } \Omega, \\
      \un \times \bm{u} &= \bm{0}, && \text{on } \Gamma. \\
    \end{aligned}
  \end{displaymath}
  From \cite[Proposition 1]{Perugia2002stabilized} and \cite[Section
  3]{Girault1986finite}, one has that $\bm{u} \in H_0^1(\curl,
    \Omega)$ with $\| \bm{u} \|_{H^1(\curl, \Omega)} \leq C(k)
    \|\bm{j} \|_{L^2(\Omega)}$. 
  In addition, the condition $\nabla \cdot (\coed \bm{u}) = 0$ brings
  that $\varepsilon \nabla \cdot \bm{u} = - \bm{u} \cdot \nabla
  \varepsilon$, together with \eqref{eq_mue}, which implies $\nabla
  \cdot \bm{u} \in H^1(\Omega)$.  From \cite[Corollary
  2.15]{Amrouche1998vector}, we conclude that $\bm{u} \in H^2(\Omega)$
  with the estimate $\| \bm{u} \|_{H^2(\Omega)} \leq C(k) \| \bm{j}
  \|_{L^2(\Omega)}$.
  %\begin{displaymath}
%    \| \bm{u} \|_{H^2(\Omega)} \leq C
    %\left( \|\bm{u} \|_{H^1(\div, \Omega)} + \| \bm{u} \|_{H^1(\curl,
    %\Omega)} \right) \leq C(k) \| \bm{j} \|_{L^2(\Omega)}.
  %\end{displaymath}
  The above results give the estimate \eqref{eq_up} and complete the
  proof.
\end{proof}
\begin{remark}
  The regularity result in Theorem \ref{th_regularity} may be invalid
  for the polygonal (polyhedral) domain because the embedding
  $H^1_0(\curl, \Omega) \cap H^1(\div, \Omega) \hookrightarrow
  H^2(\Omega)$ requires the domain $\Omega$ to be of class $C^2$
  \cite{Amrouche1998vector, Girault1986finite}. 
  \label{re_regularity}
\end{remark}
\begin{remark}
  Throughout this paper, the constants $C$ and $C_i$ are independent
  of $k$ unless otherwise stated. 
  \label{re_Ck}
\end{remark}

In this section, we propose a mixed numerical scheme for the Maxwell
problem \eqref{eq_Maxwell}, which reads: seek $(\bm{u}_h, p_h) \in \Vh
\times \Qh(r \geq m + 1, m \geq 0)$ such that 
\begin{equation}
  \begin{aligned}
    a_h(\bm{u}_h, \bm{v}_h) + b_h(\bm{v}_h, p_h) - \coek (\bm{u}_h,
    \bm{v}_h) + g_h(\bm{u}_h, \bm{v}_h)  &= l_h(\bm{v}_h), && \forall
    \bm{v}_h \in \Vh, \\
    b_h(\bm{u}_h, q_h) - c_h(p_h, q_h) - j_h(p_h, q_h) & = 0, &&
    \forall q_h \in \Qh. \\
  \end{aligned}
  \label{eq_mixedform}
\end{equation}
The bilinear form $a_h(\cdot, \cdot)$ is defined as 
\begin{align}
  &a_h(\bm{u}_h, \bm{v}_h) := a_{h, 0}(\bm{u}_h, \bm{v}_h)  + a_{h,
  1}(\bm{u}_h, \bm{v}_h) , \label{eq_bilinearA} 
  \end{align}
  where
  \begin{align*}
  a_{h, 0}(\bm{u}_h, \bm{v}_h) &:= \sum_{K \in \MTh} \int_{K^0}
  \coec (\nabla \times \bm{u}_h) \cdot (\nabla \times \bm{v}_h)
  \d{x} + \sum_{K \in \MTh} \int_{K^0} \nabla \cdot (\coed
  \bm{u}_h)\nabla \cdot (\coed \bm{v}_h) \d{x} \nonumber \\ 
  -  \sum_{f \in \MFhI} &\int_{f^0} \left( \aver{\coec
  \nabla \times \bm{u}_h} \cdot \jump{\un \times \bm{v}_h} +
  \aver{\coec \nabla \times \bm{v}_h} \cdot \jump{\un \times
  \bm{u}_h} \right) \d{s} \nonumber \\ 
  -  \sum_{K \in \MThG} &\int_{\Gamma_K} \left( \aver{\coec \nabla
  \times \bm{u}_h}\cdot \jump{\un \times \bm{v}_h}  + \aver{\coec
  \nabla \times \bm{v}_h}\cdot  \jump{\un \times \bm{u}_h} \right)
  \d{s},\nonumber  \\ 
  a_{h, 1}(\bm{u}_h, \bm{v}_h) &:=  \sum_{f \in \MFhI} \int_{f^0}
  \alpha h_f^{-1} \jump{\un \times \bm{u}_h} \cdot \jump{\un \times
  \bm{v}_h} \d{s} + \sum_{K \in \MThG} \int_{\Gamma_K} \alpha
  h_K^{-1} \jump{\un \times \bm{u}_h} \cdot \jump{\un \times
  \bm{v}_h} \d{s} \nonumber \\
  +& \sum_{f \in \MFhI} \int_{f^0} h_f^{-1} \jump{\un \cdot
  (\coed \bm{u}_h)} \cdot \jump{\un \cdot (\coed \bm{v}_h)}  \d{s},
  \nonumber 
\end{align*}
for $\forall \bm{u}_h, \bm{v}_h \in \vh$, where
$\alpha > 0$ is the penalty parameter that will be specified later.
%The bilinear form $g_h(\cdot, \cdot)$ is defined as \eqref{eq_gh}.
The bilinear form $b_h(\cdot, \cdot)$ is defined as 
\begin{equation}
  \begin{aligned}
    b_h(\bm{v}_h, p_h) &:=  \sum_{K \in \MTh} \int_{K^0}
    \nabla \cdot(\coed \bm{v}_h) p_h \d{x} - \sum_{f \in \MFhI}
    \int_{f^0} \jump{\un \cdot (\coed \bm{v}_h)} \aver{p_h}
    \d{s}, \\
  \end{aligned}
  \label{eq_bilinearB}
\end{equation}
for $\forall \bm{v}_h \in \vh, \forall p_h \in \qh$.  The bilinear
form $c_h(\cdot, \cdot)$ is defined as 
\begin{equation}
  \begin{aligned}
    c_{h}(p_h, q_h) & := \sum_{f \in \MFhI} \int_f h_f
    \jump{p_h} \cdot \jump{q_h} \d{s} + \sum_{K \in \MThG}
    \int_{\Gamma_K} h_K \jump{p_h} \cdot \jump{q_h} \d{s}, \\
  \end{aligned}
  \label{eq_bilinearC}
\end{equation}
for $\forall p_h, q_h \in \qh$. The linear form
$l_h(\cdot)$ is defined as 
\begin{displaymath}
  \begin{aligned}
    l_h(\bm{v}_h) := \sum_{K \in \MTh} \int_{K^0} \bm{j} \cdot
    \bm{v}_h \d{x} - \sum_{K \in \MThG} \int_{\Gamma_K} (\aver{\coec
    \nabla \times \bm{v}_h} \cdot \bm{g} + \alpha h_f^{-1} \jump{\un
    \times \bm{v}_h} \cdot \bm{g}) \d{s},
  \end{aligned}
\end{displaymath}
for $\forall \bm{v}_h \in \vh$.  The forms $a_{h,0}(\cdot, \cdot)$ and
$a_{h, 1}(\cdot, \cdot)$ are defined to weakly impose the continuity
condition and the boundary condition of the exact solution. 
%In
%addition, $g_h(\cdot, \cdot)$ and $j_h(\cdot, \cdot)$ provide the
%stability for vector-valued and scalar-valued unknowns near the
%boundary, respectively.

\begin{remark}
  Our error estimation is established in the case that $\Vh = \VhD$
  and $\Qh = \QhD$. We note that the numerical scheme also allows $\Vh
  = \VhC$ and $\Qh = \QhC$. For this case, the forms $a_h(\cdot,
  \cdot), b_h(\cdot, \cdot), c_h(\cdot, \cdot)$ can be further
  simplified since $\jump{\un \times \bm{v}_h}|_{\MFhI} = \bm{0}$,
  $\jump{\un \cdot (\coed \bm{v}_h)}|_{\MFhI} = 0$ for $\forall
  \bm{v}_h \in \Vh$, and  $\jump{q_h}|_{\MFhI} = \bm{0}$ for $\forall
  q_h \in \Qh$. 
\end{remark}

We first present the error estimates of the numerical solution to
\eqref{eq_mixedform}.
\begin{theorem}
  Let $(\bm{u}, p) \in H^{t + 1}(\Omega)^d \times H^{t}(\Omega)(t \geq
  1)$ be the exact solution to the problem \eqref{eq_Maxwell}, and let
  $a_h(\cdot, \cdot)$ be defined with a
  sufficiently large $\alpha$, and let $(\bm{u}_h, p_h) \in \Vh \times
  \Qh(r \geq m + 1, m \geq 0)$ be the numerical solution, then there
  exists a constant $h_1$ depending on $\Cr$ such that for any $h \leq
  h_1$, there hold
  \begin{equation}
    \begin{aligned}
      \Anorm{\bm{u} - \bm{u}_h} + \Cnorm{p - p_h} &\leq C_0 h^{s} ( \|
      \bm{u} \|_{H^{s+1}(\Omega)} + \| p \|_{H^s(\Omega)}),
      \\
      \| \bm{u} - \bm{u}_h \|_{L^2(\Omega)} & \leq C_1 h^{s+1} ( \|
      \bm{u} \|_{H^{s+1}(\Omega)} + \| p \|_{H^s(\Omega)}),
    \end{aligned}
    \label{eq_mixederror}
  \end{equation}
  \label{th_errorestimate}
  where $s = \min(m + 1, t)$ and $C_1$ depends on $\Cr$. The
  definitions of energy norms $\Anorm{\cdot}$ and $\Cnorm{\cdot}$ are
  given in the next section.
\end{theorem}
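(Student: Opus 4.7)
The plan is to follow the standard mixed finite element recipe: combine inf-sup stability of the composite form in \eqref{eq_mixedform} with weak consistency and optimal interpolation to obtain the energy estimate, and then apply an Aubin--Nitsche duality to gain the extra power of $h$ in the $L^2$ estimate. The ingredients are essentially all in place already: the Sobolev extension \eqref{eq_Eextension} lets us work on $\Omega^*$, Lemmas \ref{le_EkL2p}--\ref{le_vEK} control the extension operator $E_K$ on cut elements, Lemmas \ref{le_qhL2bound} and \ref{le_vhcurlbound} transfer norm control from $\Ohc$ to $\Oh$ via the stabilization seminorms, Lemmas \ref{le_jnormv} and \ref{le_gnormv} provide the consistency-residual bounds, and the trace inequality \eqref{eq_interfaceH1trace} handles all boundary integrals on $\Gamma_K$.

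First I would extend $(\bm{u},p)$ to $(\Es \bm{u},\Es p) \in H^{t+1}(\Omega^*)^d\times H^t(\Omega^*)$ and choose interpolants $(\bm{u}_I,p_I)\in\Vh\times\Qh$ with optimal $L^2$, $H^1$, and face-jump approximation on every $K\in\MTh$. Writing $\bm{e}_h=\bm{u}_h-\bm{u}_I$ and $\xi_h=p_h-p_I$, the inf-sup stability of the composite form (to be established in Section \ref{sec_erroraux}) produces a unit-energy test pair $(\bm{v}_h,q_h)$ such that $\Anorm{\bm{e}_h}+\Cnorm{\xi_h}$ is controlled by the left-hand side of \eqref{eq_mixedform} evaluated at $(\bm{e}_h,\xi_h;\bm{v}_h,q_h)$. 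Substituting the discrete equations and the corresponding exact-solution identities reduces the right-hand side to a sum of interpolation residuals in $a_h,b_h,c_h$ plus the weakly-consistent stabilization residuals $g_h(\Es\bm{u}-\bm{u}_I,\bm{v}_h)$ and $j_h(\Es p-p_I,q_h)$. The continuity of $a_h,b_h,c_h$ in the energy norm, together with Lemmas \ref{le_jnormv} and \ref{le_gnormv}, then yields the first half of \eqref{eq_mixederror} once $\alpha$ is chosen large enough (via \eqref{eq_interfaceH1trace} and \eqref{eq_vhcurlbound}) to absorb the Nitsche boundary terms into $a_{h,1}$.

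For the $L^2$ bound I would set $\bm{\theta}=\bm{u}-\bm{u}_h$ on $\Omega$ and consider the adjoint Maxwell problem with source $\bm{\theta}$; by Theorem \ref{th_regularity} its solution $(\bm{\phi},\psi)$ satisfies $\|\bm{\phi}\|_{H^2(\Omega)}+\|\psi\|_{H^1(\Omega)}\le \Cr\|\bm{\theta}\|_{L^2(\Omega)}$. Testing the adjoint weak form against $(\bm{\theta},p-p_h)$, using consistency, and inserting the discrete interpolants of $(\bm{\phi},\psi)$, one obtains
\begin{displaymath}
\|\bm{\theta}\|_{L^2(\Omega)}^2 \;\le\; C\,\Cr\,h\,\bigl(\Anorm{\bm{\theta}}+\Cnorm{p-p_h}\bigr)\|\bm{\theta}\|_{L^2(\Omega)}+\text{higher-order terms},
\end{displaymath}
after which the second estimate of \eqref{eq_mixederror} follows by dividing through by $\|\bm{\theta}\|_{L^2(\Omega)}$ and inserting the energy estimate already proved.

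The hard part will be the handling of the cut region and the threshold $h_1$. Because the exact solution is defined only on $\Omega$ while discrete test functions live on $\Oh$, the ghost-penalty terms $g_h,j_h$ are not strongly consistent; their contribution must be controlled through $\gnorm{\Es\bm{u}}$ and $\jnorm{\Es p}$, and this is precisely where independence of the estimates from the way $\Gamma$ cuts $\MTh$ is obtained. Making the lower bound on $\alpha$ also independent of $h$ and of the cut configuration requires invoking \eqref{eq_vhcurlbound} so that the inverse estimate for $\|\nabla\times\bm{v}_h\|_{L^2(K)}$ on cut elements $K\in\MThG$ is bounded by its values on $K^{\circ}$ plus $\gnorm{\bm{v}_h}$. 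Finally, the mesh threshold $h_1$ enters the duality argument because an absorption of order $\Cr h\,\|\bm{\theta}\|_{L^2(\Omega)}$ on the left-hand side is needed; forcing this absorption to succeed is what makes $h_1$ depend on $\Cr$, and hence ultimately on $k$.
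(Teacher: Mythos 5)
There is a genuine gap in the engine of your energy estimate: you invoke ``inf-sup stability of the composite form in \eqref{eq_mixedform}'', i.e.\ a uniform Babu\v{s}ka-type inf-sup for the full system including the indefinite term $-\coek(\bm{u}_h,\bm{v}_h)$, and you let it produce a unit-energy test pair controlling $\Anorm{\bm{e}_h}+\Cnorm{\xi_h}$. No such result is available, and it cannot be established independently of the very theorem you are proving: because of the $-\coek(\cdot,\cdot)$ term the discrete form is only of G\aa rding type, and a uniform stability statement for it is exactly what the Schatz-style duality-plus-absorption argument (with the restriction $h\le h_1(\Cr)$) is needed to obtain. The paper proves much less and works with it: Theorem \ref{th_infsup} is an inf-sup for $B_h$ alone (the divergence-constraint form against the multiplier), and \eqref{eq_Acoercivity} is coercivity of $A_h+g_h$ only up to $-C\coek\|\bm{v}_h\|_{L^2(\Omega)}^2$. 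Consequently the energy estimate of Theorem \ref{th_DGerror} is first derived on $\Ker(B_h)$ via this G\aa rding coercivity and the orthogonality \eqref{eq_Galerkinorth}, with the uncontrolled term $k\|\bm{u}-\bm{u}_h\|_{L^2(\Omega)}$ deliberately left on the right-hand side; the $B_h$ inf-sup is then used twice, once to replace the infimum over $\Ker(B_h)$ by the infimum over $\Vh\times\Qh$ and once to recover the full norm $\Cnorm{p-p_h}$. Your outline never says how the $-\coek(\bm{e}_h,\bm{v}_h)$ contribution is handled in the energy step (your list of residuals omits it), so as written the plan is either circular (it presupposes the composite inf-sup) or silently drops the indefinite term.

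The rest of your skeleton does match the paper: the duality step with the adjoint problem and the regularity bound $\|\bm{z}\|_{H^2(\Omega)}+\|\psi\|_{H^1(\Omega)}\le C\Cr\coek\|\bm{u}-\bm{u}_h\|_{L^2(\Omega)}$ is Theorem \ref{th_L2error}, and the final absorption requiring $h\le h_1$ with $h_1$ depending on $\Cr$ is how the paper closes the argument. One organizational correction: in the paper the $L^2$ estimate \eqref{eq_L2error} holds unconditionally, and the smallness condition $h\le h_1$ appears only when \eqref{eq_L2error} is substituted back into \eqref{eq_DGerror} to absorb the term $Ck\,C_0\,h\,(\Anorm{\bm{u}-\bm{u}_h}+\Cnorm{p-p_h})$ into the left-hand side; the absorption is not performed inside the duality argument as you suggest. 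To repair your proof, replace the assumed composite inf-sup by the two-ingredient Brezzi-type argument above (coercivity of $A_h+g_h$ on $\Ker(B_h)$ modulo the $\coek L^2$-term, plus the $B_h$ inf-sup of Theorem \ref{th_infsup}), carry $k\|\bm{u}-\bm{u}_h\|_{L^2(\Omega)}$ through the energy estimate, and only then combine with the duality bound and absorb under $h\le h_1(\Cr)$.
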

The uniqueness of the problem 
\eqref{eq_mixedform} directly follows from the results in Theorem
\ref{th_errorestimate}.
\begin{corollary}
  Let $a_h(\cdot, \cdot)$ be defined  with a sufficiently large
  $\alpha$, the mixed formulation \eqref{eq_mixedform} admits a unique
  solution provided $h \leq h_1$.
\end{corollary}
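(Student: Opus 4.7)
The plan is to reduce the corollary to the error estimate of Theorem \ref{th_errorestimate} via the standard finite-dimensional argument: since \eqref{eq_mixedform} is a square linear system on the finite-dimensional space $\Vh \times \Qh$, existence is equivalent to uniqueness, so it suffices to rule out nontrivial solutions of the associated homogeneous system. I would let $(\bm{u}_h^{(1)}, p_h^{(1)})$ and $(\bm{u}_h^{(2)}, p_h^{(2)})$ be any two solutions of \eqref{eq_mixedform} with the same data, and set $\delta \bm{u}_h := \bm{u}_h^{(1)} - \bm{u}_h^{(2)}$, $\delta p_h := p_h^{(1)} - p_h^{(2)}$. By linearity, $(\delta \bm{u}_h, \delta p_h)$ is then a discrete solution of \eqref{eq_mixedform} with $\bm{j} = \bm{0}$ and $\bm{g} = \bm{0}$.

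Since $k$ is not a Maxwell eigenvalue, Theorem \ref{th_regularity} guarantees that the continuous problem \eqref{eq_Maxwell} with the same homogeneous data admits only $(\bm{u}, p) = (\bm{0}, 0)$ as its solution, which trivially belongs to $H^{t+1}(\Omega)^d \times H^t(\Omega)$ for every $t \geq 1$. Inserting this trivial exact solution together with the discrete pair $(\delta \bm{u}_h, \delta p_h)$ into the error bound \eqref{eq_mixederror} — which is legitimate because $h \leq h_1$ — makes the right-hand side vanish, so
\begin{equation*}
  \Anorm{\delta \bm{u}_h} + \Cnorm{\delta p_h} = 0.
\end{equation*}

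What remains is to observe that $\Anorm{\cdot}$ and $\Cnorm{\cdot}$ act as genuine norms, rather than mere seminorms, on $\Vh$ and $\Qh$, respectively. This is where the stabilization forms $g_h$ and $j_h$ play their essential role: through Lemmas \ref{le_qhL2bound} and \ref{le_vhcurlbound} they lift the $L^2$-type and curl-type control that the interior integrals of the energy norms supply over the physical subdomain $\Ohc \subset \Omega$ to control on the entire computational mesh $\Oh$, which is precisely what is needed to guarantee definiteness on the discrete spaces. I expect this identification of definiteness to be the only delicate point of the argument; once it is granted, the display above forces $\delta \bm{u}_h = \bm{0}$ and $\delta p_h = 0$, so the homogeneous system has only the trivial solution and the invertibility of the square system \eqref{eq_mixedform} yields both existence and uniqueness for any admissible data.
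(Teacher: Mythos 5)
Your proposal is correct and follows essentially the same route as the paper, which simply notes that uniqueness (and hence, by finite dimensionality of the square system, existence) follows directly from the error estimates of Theorem \ref{th_errorestimate} applied with homogeneous data $\bm{j}=\bm{0}$, $\bm{g}=\bm{0}$, whose exact solution is trivial by Theorem \ref{th_regularity}. Your filling-in of the definiteness of $\Anorm{\cdot}$ and $\Cnorm{\cdot}$ on $\Vh\times\Qh$ (via the $L^2(\Omega)$ terms together with Lemmas \ref{le_qhL2bound} and \ref{le_vhcurlbound}, or equivalently polynomial continuation from $K^0$ to $K$) is exactly the detail the paper leaves implicit.
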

For the error estimation, we write \eqref{eq_mixedform} into the
following equivalent formulation: seek $(\bm{u}_h, p_h) \in \Vh \times
\Qh$ such that
\begin{equation}
  \begin{aligned}
    A_h(\bm{u}_h, p_h; \bm{v}_h, q_h) + B_h(\bm{v}_h, q_h; p_h) -
    \coek (\bm{u}_h, \bm{v}_h) + g_h(\bm{u}_h, \bm{v}_h) &=
    l_h(\bm{v}_h), && \forall (\bm{v}_h, q_h) \in \Vh \times \Qh, \\
    B_h(\bm{u}_h, p_h; q_h) & = 0, && \forall q_h \in \Qh,  \\
  \end{aligned}
  \label{eq_amixedform}
\end{equation}
where
\begin{displaymath}
  \begin{aligned}
    A_h(\bm{u}_h, p_h; \bm{v}_h, q_h) &:= a_h(\bm{u}_h, \bm{v}_h) +
    c_h(p_h, q_h) + j_h(p_h, q_h), \\
    B_h(\bm{v}_h, q_h; p_h) &:= b_h(\bm{v}_h, p_h) - c_h(p_h, q_h) -
    j_h(p_h, q_h).
  \end{aligned}
\end{displaymath}

% vim:spell:tw=70:fo+=Mn:cc=70
\section{Error Estimate for the Mixed Formulation}
\label{sec_erroraux}
In this section, we derive the error estimates by analysing the
problem \eqref{eq_amixedform}. We begin by introducing energy norms: 
for the space $\vh$, we define the seminorm
\begin{displaymath}
  \begin{aligned}
    \aseminorm{\bm{v}_h}^2 &:= \sum_{K \in \MTh} \|\nabla \times
    \bm{v}_h \|_{L^2(K^0)}^2 + \sum_{K \in \MTh} \| \nabla \cdot
    (\coed \bm{v}_h) \|_{L^2(K^0)}^2 + \sum_{f \in \MFhI} h_f^{-1} \|
    \jump{\un \times \bm{v}_h} \|_{L^2(f^0)}^2  \\
    +& \sum_{f \in \MFhI}  h_f^{-1} \| \jump{\un \cdot (\coed
    \bm{v}_h)} \|_{L^2(f^0)}^2  + \sum_{K \in \MThG} h_K^{-1} \|
    \jump{\un \times \bm{v}_h} \|_{L^2(\Gamma_K)}^2,  \quad \forall
    \bm{v}_h \in \vh, \\
  \end{aligned}
\end{displaymath}
and norms: 
\begin{displaymath}
  \anorm{\bm{v}_h}^2 := \coek \| \bm{v}_h \|_{L^2(\Omega)}^2  +
  \aseminorm{\bm{v}_h}^2+ \gnorm{\bm{v}_h}^2,
\end{displaymath}
\begin{displaymath}
  \begin{aligned}
    \Anorm{\bm{v}_h}^2 := \anorm{\bm{v}_h}^2 &+ \sum_{f \in \MFhI} h_f
    \| \aver{\nabla \times \bm{v}_h} \|_{L^2(f^0)}^2 + \sum_{K \in
    \MThG} h_K \| \aver{\nabla \times \bm{v}_h} \|_{L^2(\Gamma_K)}^2,
  \end{aligned}
\end{displaymath}
for $\forall \bm{v}_h \in \vh$. For the space $\qh$, we define
the seminorm 
\begin{displaymath}
  \cseminorm{q_h}^2 := \sum_{f \in \MFhI} h_f \| \jump{q_h}
  \|_{L^2(f^0)}^2   + \sum_{K \in \MThG} h_K \| \jump{q_h}
  \|_{L^2(\Gamma_K)}^2+  \jnorm{q_h}^2, \quad \forall q_h \in \qh,
\end{displaymath}
and norms 
\begin{displaymath}
  \begin{aligned}
    \cnorm{q_h}^2 := \| q_h \|_{L^2(\Omega)}^2 +   \cseminorm{q_h}^2,
    \quad
    \Cnorm{q_h}^2  :=  \cnorm{q_h}^2 +  \sum_{f \in \MFhI} h_f  \|
    \aver{q_h} \|_{L^2(f^0)}^2, 
  \end{aligned}
\end{displaymath}
for $\forall q_h \in \qh$.  For $\forall (\bm{v}_h, q_h) \in \Vh
\times \Qh$, we define 
\begin{displaymath}
  \begin{aligned}
    %\wsemninorm{\bm{v}_h, q_h}^2 := \aseminorm{\bm{v}_h}^2 +
    %\cseminorm{q_h}^2, \quad
    \Wnorm{\bm{v}_h, q_h}^2 := \Anorm{\bm{v}_h}^2 + \Cnorm{q_h}^2.
  \end{aligned}
\end{displaymath}

From Lemma \ref{le_vhcurlbound} and Lemma \ref{le_qhL2bound}, we state
the following equivalence results of above energy norms. 
\begin{lemma}
  There holds
  \begin{equation}
    \anorm{\bm{v}_h} \leq \Anorm{\bm{v}_h} \leq C \anorm{\bm{v}_h},
    \quad \forall \bm{v}_h \in \Vh.
    \label{eq_aAnorm}
  \end{equation}
  \label{le_aAnorm}
\end{lemma}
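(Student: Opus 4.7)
The first inequality $\anorm{\bm{v}_h}\le\Anorm{\bm{v}_h}$ is immediate from the definition of $\Anorm{\cdot}$, since it equals $\anorm{\cdot}^2$ plus two nonnegative face/interface terms. The real content is the reverse bound, for which the plan is to bound each of the two extra ``average curl'' terms by a volume-type norm of $\nabla\times\bm{v}_h$, and then absorb that volume norm into $\anorm{\bm{v}_h}$ by invoking Lemma~\ref{le_vhcurlbound}.

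First, I would handle the interior face contribution $\sum_{f\in\MFhI} h_f\|\aver{\nabla\times\bm{v}_h}\|_{L^2(f^0)}^2$. Since $\nabla\times\bm{v}_h$ is a polynomial on each element of $\MTh$, the standard discrete trace inequality on the full face $f$ (not on the cut $f^0$, which only shrinks the integration domain) together with $h_f\simeq h_K$ for the two adjacent elements gives
\begin{equation*}
h_f\|\aver{\nabla\times\bm{v}_h}\|_{L^2(f^0)}^2 \le C \sum_{K:\, f\subset\partial K} \|\nabla\times\bm{v}_h\|_{L^2(K)}^2,
\end{equation*}
and summing over $f\in\MFhI$ with finite overlap bounds this contribution by $C\sum_{K\in\MTh}\|\nabla\times\bm{v}_h\|_{L^2(K)}^2$.

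Next, for the cut interface term $\sum_{K\in\MThG} h_K\|\aver{\nabla\times\bm{v}_h}\|_{L^2(\Gamma_K)}^2$, I would apply the interface trace estimate from Lemma~\ref{le_interfaceH1trace} with $w$ being a component of $\nabla\times\bm{v}_h\in H^1(K)$, followed by a standard polynomial inverse inequality $\|\nabla\times\bm{v}_h\|_{H^1(K)}\le Ch_K^{-1}\|\nabla\times\bm{v}_h\|_{L^2(K)}$ to convert the $H^1(K)$-seminorm into an $L^2(K)$-norm. This yields
\begin{equation*}
h_K\|\aver{\nabla\times\bm{v}_h}\|_{L^2(\Gamma_K)}^2\le C\|\nabla\times\bm{v}_h\|_{L^2(K)}^2,
\end{equation*}
and summation over $K\in\MThG$ gives a bound by $C\sum_{K\in\MThG}\|\nabla\times\bm{v}_h\|_{L^2(K)}^2$.

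At this stage both extra terms are controlled by $C\sum_{K\in\MTh}\|\nabla\times\bm{v}_h\|_{L^2(K)}^2$, i.e.\ by the full-element curl norm. I would then invoke Lemma~\ref{le_vhcurlbound} to transfer this full-element norm to the physical-part norm plus the ghost penalty contribution, namely $\sum_{K\in\MTh}\|\nabla\times\bm{v}_h\|_{L^2(K^0)}^2+\gnorm{\bm{v}_h}^2$, both of which are majorized by $\aseminorm{\bm{v}_h}^2+\gnorm{\bm{v}_h}^2\le\anorm{\bm{v}_h}^2$. Combining with the trivial first inequality completes the proof. The only delicate step is the treatment on $\Gamma_K$, since $\Gamma$ may cut $K$ arbitrarily; this is precisely what Lemma~\ref{le_interfaceH1trace} is designed for, so the main obstacle is really bookkeeping rather than a new estimate.
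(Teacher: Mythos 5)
Your proposal is correct and follows essentially the same route as the paper: enlarge $f^0$ to $f$ and use the discrete trace inequality for the interior-face term, apply the cut trace estimate \eqref{eq_interfaceH1trace} (with an inverse inequality, which the paper leaves implicit) for the $\Gamma_K$ term, and then absorb the full-element curl norm into $\anorm{\cdot}$ via Lemma \ref{le_vhcurlbound}. No substantive difference from the paper's argument.
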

\begin{proof}
  From the standard trace estimate, the estimate 
  \eqref{eq_interfaceH1trace}, and  Lemma
  \ref{le_vhcurlbound}, we derive that 
  \begin{displaymath}
    \begin{aligned}
      \sum_{f \in \MFhI} h_f \| \aver{\nabla \times \bm{v}_h}
      \|_{L^2(f^0)}^2 &\leq \sum_{f \in \MFhI}  h_f \|
      \aver{\nabla \times \bm{v}_{h} }\|_{L^2(f)}^2  
      \leq C \sum_{K \in \MTh} \| \nabla \times \bm{v}_{h}
      \|_{L^2(K)}^2 \leq C \anorm{\bm{v}_h}^2, \\
    \end{aligned}
  \end{displaymath}
  and 
  \begin{displaymath}
    \begin{aligned}
      \sum_{K \in \MThG} h_K \| \aver{\nabla \times \bm{v}_h}
      \|_{L^2(\Gamma_K)}^2 &\leq C \sum_{K \in \MThG} \| \nabla \times
      \bm{v}_{h} \|_{L^2(K)}^2 \leq C \anorm{\bm{v}_h}^2. \\
    \end{aligned}
  \end{displaymath}
  The above estimates bring us the estimate
  \eqref{eq_aAnorm}. This completes the proof.
\end{proof}
\begin{lemma}
  There holds
  \begin{equation}
    \cnorm{q_h} \leq \Cnorm{q_h} \leq C  \cnorm{q_h} , \quad \forall
    q_h \in \Qh.
    \label{eq_cCnorm}
  \end{equation}
  \label{le_cCnorm}
\end{lemma}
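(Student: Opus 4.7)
The first inequality $\cnorm{q_h} \leq \Cnorm{q_h}$ is immediate from the definition, since $\Cnorm{q_h}^2$ is obtained from $\cnorm{q_h}^2$ by adding the nonnegative term $\sum_{f \in \MFhI} h_f \| \aver{q_h} \|_{L^2(f^0)}^2$. Thus the entire task reduces to showing that this extra face-average term is controlled by $\cnorm{q_h}^2$, which is the exact analogue of what was done for $\Anorm{\cdot}$ in Lemma \ref{le_aAnorm}, and I would model the proof on that template.

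The plan is to first bound the average by the one-sided traces, then apply the discrete inverse trace inequality element-by-element, and finally invoke Lemma \ref{le_qhL2bound} to pass from the whole computational domain $\Oh$ back to $\Omega$. In detail: for each $f \in \MFhI$ shared by $K^\pm$, the triangle inequality on $\aver{q_h}=\tfrac12(q_h^++q_h^-)$ gives $\|\aver{q_h}\|_{L^2(f^0)} \leq \tfrac12(\|q_h^+\|_{L^2(f)} + \|q_h^-\|_{L^2(f)})$ since $f^0 \subset f$. Because $q_h \in \Qh$ is a polynomial on each $K \in \MTh$, the standard inverse trace inequality yields $\|q_h\|_{L^2(f)}^2 \leq C h_K^{-1} \|q_h\|_{L^2(K)}^2$, and by the quasi-uniformity of $\MTh$ we have $h_f \leq C h_K$, so multiplying by $h_f$ and summing over $f \in \MFhI$ (with each element counted a bounded number of times) gives
\begin{displaymath}
  \sum_{f \in \MFhI} h_f \| \aver{q_h} \|_{L^2(f^0)}^2 \leq C \sum_{K \in \MTh} \| q_h \|_{L^2(K)}^2 = C \| q_h \|_{L^2(\Oh)}^2.
\end{displaymath}

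To finish, I would apply Lemma \ref{le_qhL2bound} to dominate $\|q_h\|_{L^2(\Oh)}^2$ by $C(\|q_h\|_{L^2(\Ohc)}^2 + \jnorm{q_h}^2)$, and then use $\Ohc \subset \Omega$ to bound $\|q_h\|_{L^2(\Ohc)} \leq \|q_h\|_{L^2(\Omega)}$. Together this produces
\begin{displaymath}
  \sum_{f \in \MFhI} h_f \| \aver{q_h} \|_{L^2(f^0)}^2 \leq C \big( \| q_h \|_{L^2(\Omega)}^2 + \jnorm{q_h}^2 \big) \leq C \cnorm{q_h}^2,
\end{displaymath}
and adding $\cnorm{q_h}^2$ to both sides yields $\Cnorm{q_h}^2 \leq C \cnorm{q_h}^2$.

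The only subtlety — and the reason the statement is not entirely cosmetic — lies with the cut faces and cut elements: the trace estimate is taken over the full face $f$ while we only control $q_h$ on $f^0 \subset f$, and the inverse estimate is taken over the full element $K$ while the $L^2(\Omega)$ norm only controls the interior portion $K^0$. For uncut elements this is harmless, but for $K \in \MThG$ with an arbitrarily small intersection with $\Omega$, the bound $\|q_h\|_{L^2(K)}$ cannot be controlled by $\|q_h\|_{L^2(K^0)}$ with a constant independent of the cut. This is precisely what the ghost-penalty seminorm $\jnorm{\cdot}$ (and Lemma \ref{le_qhL2bound}) is designed to handle, so invoking that lemma is the key step — without it the constant in the equivalence would blow up with the size of the smallest cut.
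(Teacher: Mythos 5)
Your argument is correct and is essentially the paper's own proof: the authors simply remark that the proof mirrors that of Lemma \ref{le_aAnorm}, i.e.\ extend the average term from $f^0$ to the full face $f$, use the discrete trace/inverse estimate on polynomials to get $C\|q_h\|_{L^2(\Oh)}^2$, and then invoke Lemma \ref{le_qhL2bound} together with $\Ohc\subset\Omega$ to absorb this into $\cnorm{q_h}^2$. Your closing observation about why the ghost-penalty seminorm is indispensable for cut elements is accurate and consistent with the paper's reasoning.
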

\begin{proof}
  The proof is similar to the proof of Lemma \ref{le_aAnorm}.
\end{proof}
We show the equivalence between $\anorm{\cdot}$ and $\Anorm{\cdot}$
restricted on the space $\Vh$, and also $\Cnorm{\cdot}$ and
$\cnorm{\cdot}$ are equivalent on the space $\Qh$.
The two norms $\anorm{\cdot}$ and $\cnorm{\cdot}$ are more natural for
the analysis when dealing with piecewise polynomial spaces $\Vh$ and
$\Qh$.

The derivation to the error estimation for the mixed formulation
\eqref{eq_amixedform} is decoupled into several steps.
\subsection*{Inf-Sup Stability}
We first prove an inf-sup stability for the bilinear form $B_h(\cdot;
\cdot)$, which is crucial for the mixed formulation. The proof
requires some stability results of spaces defined on the interior
domain $\Ohc$. 
For the partition $\MThc$, we introduce the following
piecewise polynomial spaces:
\begin{displaymath}
  \begin{aligned}
    \Qhc := \{ q_h \in L^2(\Ohc) \ | \ q_h \in \mb{P}_m(K), \ 
    \forall K \in \MThc \}, \quad \Qhcc := \Qhc \cap H^1(\Ohc), 
  \end{aligned}
\end{displaymath}
\begin{displaymath}
  \Vhc := \{ \bm{v}_h \in L^2(\Ohc)^d \ | \ \bm{v}_h \in
  \mb{P}_m(K)^d, \ \forall K \in \MThc \}, \quad \Vhcc := \Vhc \cap
  H^1(\Ohc)^d. 
\end{displaymath}
The inf-sup stability of our method is based on the following result:
\begin{lemma}
  For $m \geq 1$, there holds
  \begin{equation}
    \sup_{\bm{v}_h \in \Vhcc \cap
    H_0^1(\Ohc)^d} \frac{\int_{\Ohc} \nabla \cdot (\coed \bm{v}_h)
    q_h \d{x} }{\|\bm{v}_h \|_{H^1(\Ohc)}} \geq C \|q_h
    \|_{L^2(\Ohc)} , \quad \forall q_h \in \Qhcc.
    \label{eq_infsupTH}
  \end{equation}
  \label{le_infsupTH}
\end{lemma}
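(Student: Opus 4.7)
The plan is to reduce the weighted inf-sup to the classical Taylor--Hood inf-sup on the interior polyhedral region $\Ohc$, which is exactly the result of \cite{Guzman2018infsup}, and then to transfer the constant-coefficient bound to the $\coed$-weighted divergence via the smoothness and positivity of $\coed$. Note that for $\bm{v}_h\in H_0^1(\Ohc)^d$ integration by parts yields $\int_{\Ohc}\nabla\cdot(\coed\bm{v}_h)\,q_h\,\d{x}=-\int_{\Ohc}\coed\bm{v}_h\cdot\nabla q_h\,\d{x}$, so only the mean-zero component of $q_h$ contributes; in the subsequent analysis $q_h$ will carry an implicit mean-zero condition (or the statement should be read modulo constants), and I would begin by isolating that component and treating the constant part separately through the boundary condition $p|_\Gamma=0$.

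First I would invoke the Taylor--Hood inf-sup of \cite{Guzman2018infsup} on the quasi-uniform simplicial mesh $\MThc$ of $\Ohc$: for every mean-zero $q_h\in\Qhcc$ there exists $\bm{w}_h\in\Vhcc\cap H_0^1(\Ohc)^d$ with
\begin{equation*}
\int_{\Ohc}q_h\,\nabla\cdot\bm{w}_h\,\d{x}\geq C_0\|q_h\|_{L^2(\Ohc)}^2,\qquad \|\bm{w}_h\|_{H^1(\Ohc)}\leq C\|q_h\|_{L^2(\Ohc)},
\end{equation*}
with constants independent of $h$ and of the (possibly very rough) shape of $\partial\Ohc$; the hypothesis $m\geq 1$ is what guarantees that the Taylor--Hood pair used here is stable. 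Next I would pass from the unweighted to the weighted divergence by setting $\bm{v}_h:=\Pi_h(\coed^{-1}\bm{w}_h)\in\Vhcc\cap H_0^1(\Ohc)^d$, where $\Pi_h$ is a Scott--Zhang-type projection preserving homogeneous traces on $\partial\Ohc$. Since $\coed\in C^2(\overline{\Omega})$ with $\coed\geq\varepsilon_*>0$, product/chain-rule estimates combined with the stability of $\Pi_h$ give $\|\bm{v}_h\|_{H^1(\Ohc)}\leq C\|q_h\|_{L^2(\Ohc)}$, and the decomposition
\begin{equation*}
\int_{\Ohc}q_h\,\nabla\cdot(\coed\bm{v}_h)\,\d{x} \;=\; \int_{\Ohc}q_h\,\nabla\cdot\bm{w}_h\,\d{x} \;+\; \int_{\Ohc}q_h\,\nabla\cdot(\coed\bm{v}_h-\bm{w}_h)\,\d{x}
\end{equation*}
isolates the Guzm\'an--Olshanskii contribution as the first term, bounded below by $C_0\|q_h\|_{L^2(\Ohc)}^2$.

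The main obstacle is controlling the remainder $\int_{\Ohc}q_h\,\nabla\cdot(\coed\bm{v}_h-\bm{w}_h)\,\d{x}$. Integrating by parts, using that $\coed\bm{v}_h-\bm{w}_h\in H_0^1(\Ohc)^d$, converts it to $-\int_{\Ohc}\nabla q_h\cdot(\coed\bm{v}_h-\bm{w}_h)\,\d{x}$; the naive estimate $\|\coed\Pi_h(\coed^{-1}\bm{w}_h)-\bm{w}_h\|_{L^2(\Ohc)}\leq Ch\|\bm{w}_h\|_{H^1(\Ohc)}$ together with the inverse inequality $\|\nabla q_h\|_{L^2(\Ohc)}\leq Ch^{-1}\|q_h\|_{L^2(\Ohc)}$ yields only an $O(1)$ perturbation, rather than the $o(1)$ bound one needs to absorb into $C_0\|q_h\|_{L^2(\Ohc)}^2$. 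To close the argument I would either upgrade $\Pi_h$ into a Fortin-type projection that exactly preserves the weighted pairing $\int q_h\,\nabla\cdot(\coed\,\cdot)$ against $\Qhcc$, or run a Neumann-series/fixed-point iteration in which the $C^2$-smoothness of $\coed$ makes the correction a strict contraction after one step. Either route then delivers \eqref{eq_infsupTH} with a constant uniform in $h$ and in the position of $\Gamma$ relative to the background mesh.
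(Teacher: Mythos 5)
Your route is genuinely different from the paper's, and it does not close. The paper does not perturb off a constant-coefficient inf-sup on $\Ohc$: it proves the weighted estimate directly by a Verf\"urth-type two-step argument. Given $q_h\in\Qhcc$, it first extends $q_h$ to $\wt{q}_h$ on the full computational domain with $L^2$ and scaled-gradient control (Lemma \ref{le_ap_Ehq}), solves the \emph{weighted} divergence problem $\nabla\cdot(\coed\bm{v})=\wt{q}_h$ with $\bm{v}\in H_0^1(\Omega)^d$ on the fixed smooth domain $\Omega$ -- so the stability constant depends only on $\Omega$ and $\coed$, never on how $\Gamma$ cuts the mesh -- and transfers $\bm{v}$ to the mesh by a Scott--Zhang interpolant, split via Lemma \ref{le_ap_vhdecompose} into a part in $H_0^1(\Ohc)^d$ that feeds the supremum and a remainder near the cut that is absorbed into the scaled seminorm $\big(\sum_{K\in\MThc}h_K^2\|\nabla q_h\|_{L^2(K)}^2\big)^{1/2}$. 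That seminorm is then controlled by the supremum through a purely local edge-bubble construction $\bm{v}_h=-\sum_e h_e^2\phi_e(\bm{t}_e\cdot\nabla q_h)\bm{t}_e$ (Lemma \ref{le_ap_infsup}), where the coefficient is harmless since $\coed\geq\varepsilon_*>0$ pointwise. In this route there is no commutator term at all: the weight is built into the continuous right inverse of the divergence, which is precisely the ingredient your argument lacks.

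Concretely, two steps of your proposal are gaps. First, you assert the unweighted inf-sup on $\Ohc$, over velocities in $H_0^1(\Ohc)^d$, with constants independent of the shape of $\partial\Ohc$; that uniformity in the cut position is exactly the nontrivial content that the appendix manufactures through the extension, decomposition and bubble lemmas, and \cite[Theorem 1]{Guzman2018infsup} cannot simply be quoted for the pairing you ultimately need. Second, your passage from $\nabla\cdot\bm{w}_h$ to $\nabla\cdot(\coed\bm{v}_h)$ via $\bm{v}_h=\Pi_h(\coed^{-1}\bm{w}_h)$ leaves, as you yourself compute, an $O(1)$ remainder $-\int_{\Ohc}\nabla q_h\cdot(\coed\bm{v}_h-\bm{w}_h)\,\d{x}$ after the inverse inequality, and neither proposed repair is substantiated: a ``Fortin-type projection preserving the weighted pairing'' is essentially the statement to be proved, while the Neumann-series/contraction idea has no small parameter -- the commutator is not $o(1)$ in $h$ and its constant is never shown to be below one. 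Your remark about the constant mode is a fair observation about the statement itself (with $\bm{v}_h\in H_0^1(\Ohc)^d$ the pairing annihilates constants, and the paper's own appendix quietly faces the same compatibility requirement when solving $\nabla\cdot(\coed\bm{v})=\wt{q}_h$ in $H_0^1(\Omega)^d$), but your suggested fix through the boundary condition $p|_\Gamma=0$ is not available inside this lemma, which concerns only $\Ohc$ and homogeneous interior test functions; it cannot be used to rescue the estimate as stated.
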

The inf-sup condition for the divergence constraint on the interior
domain $\Ohc$ has been established in \cite{Guzman2018infsup}, and
this estimate \eqref{eq_infsupTH} is a modification of \cite[Theorem
1]{Guzman2018infsup}. The proof is included in Appendix
\ref{sec_app_proof}.

Now, we are ready to prove the inf-sup stability. 
\begin{theorem}
  There holds
  \begin{equation}
    \sup_{(\bm{v}_h, r_h) \in \Vh \times \Qh}
    \frac{B_h(\bm{v}_h, r_h; q_h)}{ \Wnorm{\bm{v}_h,
    r_h }} \geq C \Cnorm{q_h}, \quad \forall q_h \in \Qh.
    \label{eq_infsup}
  \end{equation}
  \label{th_infsup}
\end{theorem}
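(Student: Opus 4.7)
The plan is a Fortin-type argument. Take $r_h = -q_h$, so that $-c_h(q_h, r_h) - j_h(q_h, r_h) = c_h(q_h,q_h) + j_h(q_h,q_h) = \cseminorm{q_h}^2$ already controls the seminorm portion of $\Cnorm{q_h}^2$. It then remains to construct a field $\bm{w}_h \in \Vh$ whose contribution $b_h(\bm{w}_h, q_h)$ controls $\|q_h\|_{L^2(\Omega)}^2$, and to test $B_h$ against $(\delta \bm{w}_h, -q_h)$ with a small tuning parameter $\delta > 0$.

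To build $\bm{w}_h$, I would first pass from $q_h \in \Qh$ to a continuous surrogate $\bar{q}_h \in \Qhcc$ on the interior mesh $\MThc$: if $\Qh = \QhC$ simply set $\bar{q}_h = q_h|_{\Ohc}$, and if $\Qh = \QhD$ apply an Oswald-type averaging on $\MThc$ to obtain $\bar{q}_h \in \Qhcc$ with $\|q_h|_{\Ohc} - \bar{q}_h\|_{L^2(\Ohc)} \leq C \cseminorm{q_h}$, using that the relevant jump faces lie in $\MFhcI \subset \MFhI$ and are controlled by $c_h$. Next I would apply Lemma \ref{le_infsupTH} to $\bar{q}_h$ to produce $\bm{w}_h^\circ \in \Vhcc \cap H_0^1(\Ohc)^d$ satisfying $\int_{\Ohc} \nabla \cdot (\coed \bm{w}_h^\circ) \bar{q}_h \d{x} \geq C \|\bar{q}_h\|_{L^2(\Ohc)}^2$ and $\|\bm{w}_h^\circ\|_{H^1(\Ohc)} \leq C \|\bar{q}_h\|_{L^2(\Ohc)}$. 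Extending $\bm{w}_h^\circ$ by zero onto the cut-zone elements $\MThG$ yields $\bm{w}_h \in \VhC \subset \Vh$, since the vanishing trace of $\bm{w}_h^\circ$ on $\partial \Ohc$ preserves $H^1$-conformity across interior faces while each cut element carries the zero polynomial. This conformity kills every interior normal-jump term in $b_h$, and $\bm{w}_h \equiv 0$ on $\MThG$ kills every Nitsche, facial-jump and ghost-penalty contribution to $\Anorm{\bm{w}_h}$ supported on cut elements.

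With these preparations, $b_h(\bm{w}_h, q_h) = \int_{\Ohc} \nabla \cdot (\coed \bm{w}_h^\circ) q_h \d{x}$; splitting $q_h = \bar{q}_h + (q_h - \bar{q}_h)$ and applying the inf-sup lower bound to the first piece together with Cauchy--Schwarz and Young's inequality to the second gives
\[
  b_h(\bm{w}_h, q_h) \geq C_1 \|q_h\|_{L^2(\Ohc)}^2 - C_2 \cseminorm{q_h}^2.
\]
Testing $B_h$ against $(\delta \bm{w}_h, -q_h)$ then produces $B_h(\delta \bm{w}_h, -q_h; q_h) \geq \delta C_1 \|q_h\|_{L^2(\Ohc)}^2 + (1 - \delta C_2) \cseminorm{q_h}^2$. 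Invoking Lemma \ref{le_qhL2bound} to replace $\|q_h\|_{L^2(\Ohc)}^2$ by a multiple of $\|q_h\|_{L^2(\Omega)}^2 - \jnorm{q_h}^2$, absorbing $\jnorm{q_h}^2 \leq \cseminorm{q_h}^2$, and choosing $\delta$ small enough for both coefficients to stay positive, we arrive at $B_h(\delta \bm{w}_h, -q_h; q_h) \geq C \cnorm{q_h}^2 \geq C \Cnorm{q_h}^2$ via Lemma \ref{le_cCnorm}. Finally, the zero-extension reduces $\Anorm{\bm{w}_h}$ to $H^1(\Ohc)$ norms of $\bm{w}_h^\circ$, so $\Wnorm{\delta \bm{w}_h, -q_h} \leq C \Cnorm{q_h}$, and dividing delivers \eqref{eq_infsup}.

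The main obstacle is Lemma \ref{le_infsupTH} itself, whose appendix proof adapts Guzm\'an--Neilan to the trimmed mesh $\MThc$. Among the steps sketched above, the subtlest bookkeeping is verifying that the DG-to-continuous averaging loses at most $\cseminorm{q_h}$, and that the zero-extension genuinely bounds $\Anorm{\bm{w}_h}$ by $\|\bm{w}_h^\circ\|_{H^1(\Ohc)}$ (up to the $\coek$ contribution present in $\anorm{\cdot}$) with constants independent of how $\Gamma$ cuts the background mesh.
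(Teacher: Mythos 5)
For $m \geq 1$ your construction is essentially the paper's own argument: the paper also passes to a continuous surrogate (via the averaging result of Karakashian--Pascal, applied on the whole mesh $\MTh$ and then restricted to $\Ohc$, rather than your Oswald averaging on $\MThc$ alone), applies Lemma \ref{le_infsupTH} to it, extends the resulting field by zero across the cut zone, pairs it with $r_h = -t q_h$, and tunes the two parameters; your bookkeeping of $b_h(\bm{w}_h,q_h)$, of $\Anorm{\bm{w}_h}$ under the zero extension, and the use of Lemma \ref{le_qhL2bound} and Lemma \ref{le_cCnorm} all match. (Two cosmetic points: since $c_h$ integrates jumps over whole faces $f$ while $\cseminorm{\cdot}$ uses $f^0$, your identity $c_h(q_h,q_h)+j_h(q_h,q_h)=\cseminorm{q_h}^2$ should be an inequality $\geq$, which is all you need; and the $\coek\|\bm{w}_h\|_{L^2(\Omega)}^2$ term in $\Anorm{\bm{w}_h}$ is handled exactly as in the paper.)

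The genuine gap is the case $m = 0$, which the theorem covers (the scheme allows $m \geq 0$, and the paper uses the piecewise-constant multiplier space both for the $p=0$ discussion after Theorem \ref{th_L2error} and in Examples 2 and 4). Your entire mechanism breaks down there: Lemma \ref{le_infsupTH} is stated, and proved in the appendix via edge bubbles multiplied by $\bm{t}_e\cdot\nabla q_h$, only for $m \geq 1$, and for piecewise constants the continuous surrogate in $\Qhcc$ degenerates to a global constant (elementwise $\nabla q_h = 0$), so no interior discrete inf-sup of this form is available. The paper handles $m=0$ by a different route: it takes the continuous right inverse of the divergence, $\bm{v}\in H_0^1(\Omega)^d$ with $\nabla\cdot(\coed\bm{v})=q_h$ and $\|\bm{v}\|_{H^1(\Omega)}\leq C\|q_h\|_{L^2(\Omega)}$, extends by zero, interpolates by Scott--Zhang into $\bmr{V}_h^1\subset\Vh$ (possible since $r\geq m+1=1$), and then integrates by parts so that the consistency error $\int_\Omega \nabla\cdot(\coed(\bm{v}_h-\bm{v}))\,q_h\,\mathrm{d}x$ lands on the jumps of $q_h$ over $f^0$ and $\Gamma_K$, which are absorbed by $c_h$ through the same $\delta,t$ tuning. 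Without an argument of this kind your proof establishes \eqref{eq_infsup} only for $m\geq 1$ and is therefore incomplete as a proof of Theorem \ref{th_infsup}.
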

\def\wqc{\wt{q}_{h, \bmr{c}}}
\def\wqt{\wt{q}_{h, \bot}}
\def\qc{{q}_{h, \bmr{c}}}
\def\qt{{q}_{h, \bot}}
\begin{proof}
  We first prove the case $m \geq 1$. For fixed $q_h \in \Qh$, from
  \cite[Theorem 2.1]{Karakashian2007convergence}, there exists $\qc
  \in \QhC$ such that 
  \begin{equation}
    \| q_h - \qc \|_{L^2(\Oh)}^2 \leq C \sum_{f \in \MFhI} h_f \|
    \jump{q_h} \|_{L^2(f)}^2,
    \label{eq_L2qhqc}
  \end{equation}
  and we let $\qt := q_h - \qc$ and $\wqc := \qc|_{\Ohc} \in \Qhcc$. From Lemma
  \ref{le_infsupTH}, there exists $\wt{\bm{v}}_h \in \Vhcc \cap
  H_0^1(\Ohc)$ such that 
  \begin{displaymath}
    \|\wqc \|_{L^2(\Ohc)} \|\wt{\bm{v}}_h \|_{H^1(\Ohc)}\leq C \int_{\Ohc}
    \nabla \cdot (\coed \wt{\bm{v}}_h) \wqc \d{x},
  \end{displaymath}
  and $\wt{\bm{v}}_h$ is selected to  satisfy  $ \|\wt{\bm{v}}_h
  \|_{H^1(\Ohc)} = \|\wqc \|_{L^2(\Ohc)}$. Since $\wt{\bm{v}}_h \in
  H_0^1(\Ohc)$,  we extend it to the domain $\Oh$ by zero
  and there holds
  \begin{displaymath}
    \begin{aligned}
      B_h(\wt{\bm{v}}_h, \bm{0}; \qc) = \int_{\Ohc} \nabla
      \cdot (\coed \wt{\bm{v}}_h) \wqc \d{x} \geq C \|\wqc
      \|_{L^2(\Ohc)}^2 = C \| \qc \|_{L^2(\Ohc)}^2.
    \end{aligned}
  \end{displaymath}
  Further, for any $\delta > 0$,  we derive that
  \begin{displaymath}
    \begin{aligned}
      B_h(\wt{\bm{v}}_h, \bm{0}; \qt) &= \sum_{K \in \MThc}
      \int_{K} \nabla \cdot (\coed \wt{\bm{v}}_h) \qt \d{x} \geq -
      C\delta  \|\wt{\bm{v}}_h\|_{H^1(\Ohc)}^2 -
      \delta^{-1} \| \qt \|_{L^2(\Ohc)}^2 \\
      & \geq -C \delta \| \qc \|_{L^2(\Ohc)}^2 - \delta^{-1}
      \| \qt \|_{L^2(\Ohc)}^2.
    \end{aligned}
  \end{displaymath} 
  From \eqref{eq_L2qhqc}, we have that
  \begin{displaymath}
    B_h(\bm{0}, -q_h; q_h) \geq \sum_{f \in \MFhI}  h_f \| \jump{q_h}
    \|_{L^2(f)}^2 + \jnorm{q_h}^2 \geq C  \| \qt \|_{L^2(\Oh)}^2 +
    \jnorm{q_h}^2.
  \end{displaymath}
  We take $(\bm{v}_h, r_h) :=
  (\wt{\bm{v}}_h, -t q_h)$ with a parameter $t > 0$, we conclude
  that there exist constants $C_0, \ldots, C_3$ such that
  \begin{displaymath}
    \begin{aligned}
      B_h(&\bm{v}_h, r_h; q_h) = B_h(\bm{v}_h, \bm{0}; \qc)
      + B_h(\bm{v}_h, \bm{0}; \qt) + B_h(\bm{0}, -t q_h; q_h) \\
      & \geq C_0 \| \qc \|_{L^2(\Ohc)}^2 - C_2 \delta \| \qc
      \|_{L^2(\Ohc)}^2 - C_3\delta^{-1} \| \qt \|_{L^2(\Ohc)}^2 + tC_1
      \| \qt \|_{L^2(\Oh)}^2 + t\jnorm{q_h}^2 \\ 
      & \geq (C_0 - C_2\delta) \| \qc \|_{L^2(\Ohc)}^2 + (tC_1 -
      C_3\delta^{-1}) \| \qt \|_{L^2(\Ohc)}^2 + t \jnorm{q_h}^2. \\
    \end{aligned}
  \end{displaymath}
  Selecting suitable parameters $\delta, t$ and by Lemma
  \ref{le_qhL2bound}, we arrive at $B_h(\bm{v}_h, r_h; q_h) \geq
  C \cnorm{q_h}^2 \geq C \Cnorm{q_h}^2$. Moreover, we have that
  $\Wnorm{\bm{v}_h, r_h}^2 = \Anorm{\bm{v}_h}^2 + \Cnorm{r_h}^2 \leq C
  \| \qc \|_{L^2(\Ohc)}^2 +  C \Cnorm{q_h}^2 \leq C \Cnorm{q_h}^2$.
  Collecting all above estimates gives the desired inf-sup estimate
  \eqref{eq_infsup} for the case $m \geq 1$.

  Then we prove   the case $m = 0$. Since $\coed$ is smooth, there
  exists $\bm{v} \in H_0^1(\Omega)^d$ such that
  \cite{Monk2003finite}
  \begin{equation}
    \nabla \cdot (\coed \bm{v}) = q_h, \ \text{in } \Omega, \quad
    \bm{v} = \bm{0}, \ \text{on } \Gamma,
    \label{eq_coedv}
  \end{equation}
  with $\|\bm{v} \|_{H^1(\Omega)} \leq C \|{q}_h \|_{L^2(\Omega)}$.
  We extend $\bm{v}$ by zero to the domain $\Oho$ and let $\bm{v}_h \in
  \bmr{V}_h^1$ be its Scott-Zhang interpolant on the mesh $\MTh$. 
  We have that
  \begin{displaymath}
    \begin{aligned}
      B_h(\bm{v}_h, 0; q_h) = \int_{\Omega} \nabla \cdot (\coed
      \bm{v}_h) q_h \d{s} = \|q_h\|_{L^2(\Omega)}^2+ 
      \int_{\Omega} \nabla \cdot(\coed ( \bm{v}_h - \bm{v}) ) q_h
      \d{x}.
    \end{aligned}
  \end{displaymath}
  From the integration by parts and the approximation property of the
  interpolant, there holds
  \begin{displaymath}
    \begin{aligned}
      \int_{\Omega} \nabla \cdot(\coed ( \bm{v}_h - \bm{v}) ) q_h \d{x}
      = \sum_{f \in \MFhI} \int_{f^0} &(\coed (\bm{v}_h - \bm{v}))
      \cdot \jump{q_h} \d{s} + \sum_{K \in \MThG} \int_{\Gamma_K}
      (\coed (\bm{v}_h - \bm{v}))\cdot  \jump{q_h} \d{s}\\
%      \geq -t C_0\sum_{K \in \MTh} (h_K^{-2} \| &\bm{v}_h - \bm{v}
      %\|_{L^2(K)}^2 + \|\bm{v}_h - \bm{v} \|_{H^1(K)}^2 ) \\
      %- C_1 &t^{-1}\Big( \sum_{f \in \MFhI} h_f \|
      %\jump{q_h} \|_{L^2(f^0)}^2+ \sum_{K \in \MThG} h_K \| \jump{q_h}
      %\|_{L^2(\Gamma_K)}^2 \Big)   \\ 
      \geq -C_0 \delta \|\bm{v} \|_{H^1(\Omega)}^2- C_1
      &\delta^{-1}\Big( \sum_{f \in \MFhI} h_f \| \jump{q_h}
      \|_{L^2(f^0)}^2+ \sum_{K \in \MThG} h_K \| \jump{q_h}
      \|_{L^2(\Gamma_K)}^2 \Big),   \\ 
    \end{aligned}
  \end{displaymath}
  for any $\delta > 0$. Similarly, we let $r_h := -t q_h$ and take
  suitable $\delta$ and $t$, we have that
  \begin{displaymath}
    B_h(\bm{v}_h, r_h; q_h) \geq C \cnorm{q_h}^2 \geq C \Cnorm{q_h}^2.
  \end{displaymath}
  Clearly, one has that $\Wnorm{\bm{v}_h, r_h} \leq C \Cnorm{q_h}$.
  Combining all results leads to the inf-sup estimate
  \eqref{eq_infsup}, which completes the proof.
\end{proof}

\subsection*{Continuity and Coercivity Properties}
Next, we show the continuity and the coercivity
properties of the bilinear form $A_h(\cdot; \cdot)$ and $B_h(\cdot;
\cdot)$. 

\begin{lemma}
  There hold
  \begin{equation}
    |A_h(\bm{u}_h, p_h; \bm{v}_h, q_h)| \leq C
    \Wnorm{\bm{u}_h, p_h} \Wnorm{\bm{v}_h,q_h},
    \quad \forall (\bm{u}_h, p_h), (\bm{v}_h, q_h)
    \in \vh \times \qh, 
    \label{eq_Acontinuity}
  \end{equation}
  and 
  \begin{equation}
    |B_h(\bm{v}_h, r_h; q_h)| \leq C \Wnorm{\bm{v}_h,
    r_h} \Cnorm{q_h}, \quad \forall (\bm{v}_h, r_h)
    \in \vh \times \qh, \ \forall q_h \in \qh.
    \label{eq_Bcontinuity}
  \end{equation}
  \label{le_continuity}
\end{lemma}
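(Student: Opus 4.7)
The plan is to prove both estimates by applying the Cauchy--Schwarz inequality term-by-term to the definitions of $a_h$, $b_h$, $c_h$, and $j_h$, choosing the weights $h_f^{\pm 1/2}$ and $h_K^{\pm 1/2}$ so that, in every product, one factor lands in the seminorm $\aseminorm{\cdot}$ or $\cseminorm{\cdot}$ and the other in the additional pieces that distinguish $\Anorm{\cdot}$ from $\anorm{\cdot}$ (respectively $\Cnorm{\cdot}$ from $\cnorm{\cdot}$). The bounds on $\coec$ and $\coed$ from \eqref{eq_mue} are then used to remove the material coefficients.

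For \eqref{eq_Acontinuity}, I would split $A_h=a_{h,0}+a_{h,1}+c_h+j_h$. The volume curl--curl and div--div integrals in $a_{h,0}$ are bounded directly by $\aseminorm{\bm{u}_h}\,\aseminorm{\bm{v}_h}$. The consistency integrals on interior faces and on $\Gamma_K$ are the critical ones: for a typical term $\int_{f^0}\aver{\coec\nabla\times\bm{u}_h}\cdot\jump{\un\times\bm{v}_h}\,\d{s}$, Cauchy--Schwarz with weights gives
\begin{displaymath}
\bigl(h_f^{1/2}\|\aver{\nabla\times\bm{u}_h}\|_{L^2(f^0)}\bigr)\cdot\bigl(h_f^{-1/2}\|\jump{\un\times\bm{v}_h}\|_{L^2(f^0)}\bigr),
\end{displaymath}
so the first factor is absorbed by the extra $h_f\|\aver{\nabla\times\bm{u}_h}\|^2$ term inside $\Anorm{\bm{u}_h}^2$, and the second by the $h_f^{-1}\|\jump{\un\times\bm{v}_h}\|^2$ term inside $\aseminorm{\bm{v}_h}^2$. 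The same trick with $h_K$-weights handles the $\Gamma_K$ consistency integrals. The penalty terms in $a_{h,1}$ are of pure product form and are bounded directly by $\aseminorm{\bm{u}_h}\,\aseminorm{\bm{v}_h}$, while $c_h(p_h,q_h)$ and $j_h(p_h,q_h)$ are bounded by $\cseminorm{p_h}\,\cseminorm{q_h}\le\Cnorm{p_h}\,\Cnorm{q_h}$. Summing and using $\aseminorm{\cdot}\le\Anorm{\cdot}$, $\Cnorm{\cdot}\le\Wnorm{\cdot,\cdot}$ yields \eqref{eq_Acontinuity}.

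For \eqref{eq_Bcontinuity}, with $B_h(\bm{v}_h,r_h;q_h)=b_h(\bm{v}_h,q_h)-c_h(q_h,r_h)-j_h(q_h,r_h)$, I would treat $b_h$ analogously. The volume piece $\sum_K\int_{K^0}\nabla\cdot(\coed\bm{v}_h)\,q_h\,\d{x}$ is controlled by $\aseminorm{\bm{v}_h}\,\|q_h\|_{L^2(\Omega)}\le\Anorm{\bm{v}_h}\,\Cnorm{q_h}$. The face piece $\sum_f\int_{f^0}\jump{\un\cdot(\coed\bm{v}_h)}\aver{q_h}\,\d{s}$ requires the dual weighting: $h_f^{-1/2}\|\jump{\un\cdot(\coed\bm{v}_h)}\|_{L^2(f^0)}$ is inside $\aseminorm{\bm{v}_h}\le\Anorm{\bm{v}_h}$, while $h_f^{1/2}\|\aver{q_h}\|_{L^2(f^0)}$ is exactly the supplementary term in $\Cnorm{q_h}$ --- this is what that extra term in $\Cnorm{\cdot}$ exists for. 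The remaining contributions $-c_h(q_h,r_h)-j_h(q_h,r_h)$ are dominated by $\Cnorm{q_h}\,\Cnorm{r_h}\le\Cnorm{q_h}\,\Wnorm{\bm{v}_h,r_h}$, which completes \eqref{eq_Bcontinuity}.

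No single step is genuinely hard; the proof is essentially careful bookkeeping. The one subtlety worth flagging is that the argument only goes through because the norms $\Anorm{\cdot}$ and $\Cnorm{\cdot}$ have been defined with precisely the extra $h_f^{1/2}\|\aver{\cdot}\|_{L^2(f^0)}$ and $h_K^{1/2}\|\aver{\cdot}\|_{L^2(\Gamma_K)}$ pieces required to dominate the consistency and pressure-face contributions; removing any of these would break the weighted Cauchy--Schwarz step. The estimates for $c_h$ and $j_h$ follow immediately from their definitions, so the entire proof reduces to applying these weighted splittings a finite number of times and invoking \eqref{eq_mue} to handle the coefficient factors.
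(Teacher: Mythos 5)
Your proposal is correct and follows the same route as the paper, which simply notes that both estimates follow from the Cauchy--Schwarz inequality and the definitions of the bilinear forms; your term-by-term weighted splitting with $h_f^{\pm 1/2}$, $h_K^{\pm 1/2}$ and the coefficient bounds \eqref{eq_mue} is exactly the bookkeeping the paper leaves implicit.
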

\begin{proof}
  The proofs of the continuity results \eqref{eq_Acontinuity} and
  \eqref{eq_Bcontinuity} are quite formal, directly following from the
  Cauchy-Schwarz inequality and the definitions \eqref{eq_jh},
  \eqref{eq_gh} and \eqref{eq_bilinearA} - \eqref{eq_bilinearC}.
\end{proof}
Let us show the coercivity of the bilinear form $a_h(\cdot, \cdot)$.
\begin{lemma}
  Let the bilinear form $a_h(\cdot, \cdot)$ be defined with a
  sufficiently large $\alpha > 0$, then there holds
  \begin{equation}
    a_h(\bm{v}_h, \bm{v}_h) + g_h(\bm{v}_h, \bm{v}_h) \geq C
    \aseminorm{\bm{v}_h}^2,  \quad \forall \bm{v}_h \in \Vh .
    \label{eq_acoercivity}
  \end{equation}
  \label{le_acoercivity}
\end{lemma}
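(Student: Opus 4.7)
The plan is a standard DG coercivity argument adapted to the unfitted setting. I first expand
\[
a_h(\bm{v}_h,\bm{v}_h) = \sum_{K\in\MTh}\|\coec^{1/2}\nabla\times\bm{v}_h\|_{L^2(K^0)}^2 + \sum_{K\in\MTh}\|\nabla\cdot(\coed\bm{v}_h)\|_{L^2(K^0)}^2 - 2T_I - 2T_\Gamma + P_\alpha,
\]
where $T_I, T_\Gamma$ collect the consistency integrals on $f^0$ and $\Gamma_K$, and $P_\alpha$ denotes the penalty contributions from $a_{h,1}$. The two volume square terms together with the normal-jump and tangential-jump penalties inside $P_\alpha$ already add up to the full $\aseminorm{\bm{v}_h}^2$, so the only real task is to absorb $T_I$ and $T_\Gamma$ into the positive pieces.

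By Cauchy--Schwarz and Young's inequality, for any $\epsilon>0$,
\[
2|T_I| + 2|T_\Gamma| \leq \epsilon\, S(\bm{v}_h) + \epsilon^{-1} J(\bm{v}_h),
\]
where $J(\bm{v}_h)$ is exactly the sum of the weighted tangential-jump seminorms on $\MFhI$ and on $\{\Gamma_K\}_{K\in\MThG}$ already sitting inside $a_{h,1}$, and
\[
S(\bm{v}_h) := \sum_{f\in\MFhI} h_f\,\|\aver{\coec\nabla\times\bm{v}_h}\|_{L^2(f^0)}^2 + \sum_{K\in\MThG} h_K\,\|\aver{\coec\nabla\times\bm{v}_h}\|_{L^2(\Gamma_K)}^2.
\]
For the interior-face contribution I use $f^0\subset f$ together with the standard discrete trace inequality on the polynomial $\nabla\times\bm{v}_h|_K$. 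For the cut-boundary contribution, Lemma \ref{le_interfaceH1trace} combined with the polynomial inverse estimate $\|\nabla\times\bm{v}_h\|_{H^1(K)}\leq C h_K^{-1}\|\nabla\times\bm{v}_h\|_{L^2(K)}$ yields $h_K\|\nabla\times\bm{v}_h\|_{L^2(\Gamma_K)}^2 \leq C\|\nabla\times\bm{v}_h\|_{L^2(K)}^2$. With the coefficient bound \eqref{eq_mue}, both pieces are controlled by $C\sum_{K\in\MTh}\|\nabla\times\bm{v}_h\|_{L^2(K)}^2$.

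The main obstacle is that this last bound sees the curl over the \emph{full} elements $K$, whereas the coercive volume term in $a_h$ only controls the interior parts $\|\nabla\times\bm{v}_h\|_{L^2(K^0)}$, which for a cut element can be arbitrarily small relative to the full element norm. This is precisely the geometric robustness issue that the ghost penalty is built to resolve: by Lemma \ref{le_vhcurlbound},
\[
\sum_{K\in\MTh}\|\nabla\times\bm{v}_h\|_{L^2(K)}^2 \leq C\Bigl( \sum_{K\in\MTh}\|\nabla\times\bm{v}_h\|_{L^2(K^0)}^2 + \gnorm{\bm{v}_h}^2 \Bigr),
\]
so $\epsilon S(\bm{v}_h)$ splits into a small multiple of the coercive curl volume term and a small multiple of $g_h(\bm{v}_h,\bm{v}_h)$, both of which sit on the left-hand side of \eqref{eq_acoercivity}.

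It then suffices to choose $\epsilon$ small enough to absorb these two contributions into the interior curl norm and the ghost penalty on the left, and afterwards $\alpha$ large enough that the tangential-jump penalties inside $a_{h,1}$ dominate $\epsilon^{-1}J(\bm{v}_h)$ with room to spare. The divergence-square volume term and the normal-jump penalty on $\MFhI$ pass through unchanged and enter $\aseminorm{\bm{v}_h}^2$ directly, giving the desired lower bound $C\aseminorm{\bm{v}_h}^2$.
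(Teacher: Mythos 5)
Your proposal is correct and follows essentially the same route as the paper's proof: Young's inequality on the consistency terms, discrete trace estimates on full faces plus the cut-boundary trace estimate \eqref{eq_interfaceH1trace} with an inverse inequality, then Lemma \ref{le_vhcurlbound} to trade the full-element curl norm for the interior-part curl norm plus the ghost penalty, and finally absorption by taking the Young parameter small and $\alpha$ large. The only difference is presentational (you make the geometric-robustness role of $g_h$ explicit, which the paper leaves implicit in its use of Lemma \ref{le_vhcurlbound}).
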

\begin{proof}
  From Lemma \ref{le_vhcurlbound} and the trace estimate, there holds 
  \begin{align*}
    -2\sum_{f \in \MFhI} &\int_{f^0} \aver{\coec \nabla
    \times \bm{v}_h} \cdot \jump{\un \times \bm{v}_h} \d{s} \\
     \geq& -C_0 t \sum_{f \in \MFhI} h_f \| \aver{\coec \nabla \times
    \bm{v}_h} \|_{L^2(f)}^2 - C_1 t^{-1} \sum_{f \in \MFhI} h_f^{-1}
    \| \jump{\un \times \bm{v}_h} \|_{L^2(f^0)}^2\\
%    \geq & -C_0 t \sum_{K \in \MTh} \|\nabla \times \bm{v}_h
    %\|_{L^2(K)}^2  - C_1 t^{-1} \sum_{f \in \MFhI} h_f^{-1}
    %\| \jump{\un \times \bm{v}_h} \|_{L^2(f^0)}^2\\
    \geq & -C_0 t (\sum_{K \in \MTh} \| \nabla \times \bm{v}_h
    \|_{L^2(K^0)}^2  + \gnorm{\bm{v}_h}^2)  - C_1 t^{-1}
    \sum_{f \in \MFhI} h_f^{-1} \| \jump{\un \times \bm{v}_h}
    \|_{L^2(f^0)}^2, 
  \end{align*}
  for $\forall t > 0$. Similarly, from the trace estimate
  \eqref{eq_interfaceH1trace}, we have that
  \begin{align*}
    -2\sum_{K \in \MThG} &\int_{\Gamma_K} \aver{\coec \nabla \times
    \bm{v}_h} \cdot \jump{\un \times \bm{v}_h} \d{s}  \\
 %   \geq & -C_0 t \sum_{K \in \MTh} \|\nabla \times \bm{v}_h
    %\|_{L^2(K)}^2  - C_1 t^{-1} \sum_{K \in \MThG} h_K^{-1}
    %\| \jump{\un \times \bm{v}_h} \|_{L^2(\Gamma_K)}^2\\
    \geq & -C_0 t (\sum_{K \in \MTh} \| \nabla \times \bm{v}_h
    \|_{L^2(K^0)}^2  + \gnorm{\bm{v}_h}^2)  - C_1 t^{-1}
    \sum_{K \in \MThG} h_K^{-1} \| \jump{\un \times \bm{v}_h}
    \|_{L^2(\Gamma_K)}^2, 
  \end{align*}
  for $\forall t > 0$. Thus, we conclude that there exist constants
  $C_0, \ldots, C_3$ such that 
  \begin{align*}
    a_h(\bm{v}_h, \bm{v}_h) + & g_h(\bm{v}_h, \bm{v}_h) \geq (C_0 -
    tC_1) \sum_{K \in \MTh} \| \nabla \times \bm{v}_h
    \|_{L^2(K^0)}^2 + \sum_{K \in \MTh}  \| \nabla \cdot (\coed
    \bm{v}_h) \|_{L^2(K^0)}^2  \\
    +  & (\alpha - C_2 t^{-1} ) \sum_{f \in \MFhI} h_f^{-1} \|
    \jump{\un \times \bm{v}_h} \|_{L^2(f^0)}^2 + \sum_{f
    \in \MFhI}  \| \jump{\un \cdot (\coed \bm{v}_h)}
    \|_{L^2(f^0)}^2 \\
    + & (\alpha - C_3 t^{-1} )\sum_{K \in \MThG} h_K^{-1} \|
    \jump{\un \times \bm{v}_h} \|_{L^2(\Gamma_K)}^2, \quad \forall t
    > 0.
  \end{align*}
  Taking a proper $t$ such that $C_0 - t C_1 > 0$ and choosing a
  sufficiently large $\alpha$, we obtain the estimate
  \eqref{eq_acoercivity}. This completes the proof.
\end{proof}
As a direct consequence of Lemma \ref{le_acoercivity} and Lemma
\ref{le_aAnorm},  we have that 
\begin{equation}
  A_h(\bm{v}_h, q_h; \bm{v}_h, q_h) + g_h(\bm{v}_h,
  \bm{v}_h) \geq C (
  \Anorm{\bm{v}_h}^2 + \cseminorm{q_h}^2) - C \coek \|\bm{v}_h
  \|_{L^2(\Omega)}^2, \quad \forall (\bm{v}_h, q_h) \in \Vh
  \times \Qh.
  \label{eq_Acoercivity}
\end{equation}

\subsection*{Error estimates} 
We are ready to derive the error bounds under both the energy norm and
the $L^2$ norm for the problem \eqref{eq_amixedform}. We first present
the approximation results under energy norms. Here we assume the mesh
size $h$ satisfies $hk \leq C$.
\begin{lemma}
  For any $\bm{v} \in H^{t+1}(\Omega)^d(t \geq 1)$, there exists
  $\bm{v}_h \in \Vh$ such that
  \begin{equation}
    \Anorm{\bm{v} - \bm{v}_h} \leq C h^{s} \|\bm{v}
    \|_{H^{s+1}(\Omega)}, \quad \gnorm{\bm{v}_h} \leq Ch^{s}
    \|\bm{v} \|_{H^{s+1}(\Omega)},
    \label{eq_Anormapp}
  \end{equation}
  where $s = \min(t, r)$.
  \label{le_Anormapp}
\end{lemma}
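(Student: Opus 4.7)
The plan is to construct $\bm{v}_h$ as a Scott--Zhang interpolant of a Sobolev extension of $\bm{v}$ on the background mesh, then to estimate each ingredient of $\Anorm{\bm{v}-\bm{v}_h}$ and $\gnorm{\bm{v}_h}$ separately using standard approximation theory together with the cut-element trace inequality \eqref{eq_interfaceH1trace}. First I would invoke \eqref{eq_Eextension} to produce $\bm{v}^* := \Es \bm{v} \in H^{t+1}(\Omega^*)^d$ with $\|\bm{v}^*\|_{H^{t+1}(\Omega^*)} \leq C\|\bm{v}\|_{H^{t+1}(\Omega)}$, and then take $\bm{v}_h \in \VhC \subset \Vh$ to be the Scott--Zhang interpolant of $\bm{v}^*$ on $\MTh$. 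Quasi-uniformity of $\MTh$ and standard finite element approximation theory give the local master bound $\|\bm{v}^* - \bm{v}_h\|_{L^2(K)} + h_K \|\bm{v}^* - \bm{v}_h\|_{H^1(K)} + h_K^2 |\bm{v}^* - \bm{v}_h|_{H^2(K)} \leq C h_K^{s+1} \|\bm{v}^*\|_{H^{s+1}(\Delta(K))}$ for every $K \in \MTh$.

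Next, I would bound each term of $\Anorm{\bm{v}-\bm{v}_h}^2$ against this master estimate. The volume contributions $\|\nabla\times(\bm{v}-\bm{v}_h)\|_{L^2(K^0)}$ and $\|\nabla\cdot(\coed(\bm{v}-\bm{v}_h))\|_{L^2(K^0)}$ follow at once from $K^0\subset K$ and smoothness of $\coed$. The $\coek$-weighted $L^2$ piece is absorbed thanks to the assumption $hk \leq C$, which converts the $h^{2(s+1)}$ bound into $h^{2s}$. Since $\bm{v}_h$ is globally continuous and $\bm{v}$ is smooth, the interior face jumps $\jump{\un\times(\bm{v}-\bm{v}_h)}$ and $\jump{\un\cdot(\coed(\bm{v}-\bm{v}_h))}$ vanish identically, so those penalty terms contribute nothing. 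The remaining boundary jumps on $\Gamma_K$ and the face averages are treated by applying the standard trace inequality on interior faces and the cut trace estimate \eqref{eq_interfaceH1trace} on $\Gamma_K$ to $\bm{v}^*-\bm{v}_h$ and to $\nabla\times(\bm{v}^*-\bm{v}_h)$; each term reduces to $C h^{2s}\|\bm{v}^*\|_{H^{s+1}(\Delta(K))}^2$ after summation and one final appeal to \eqref{eq_Eextension}.

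Finally, to control $\gnorm{\bm{v}_h}$ I would split by the triangle inequality as $\gnorm{\bm{v}_h} \leq \gnorm{\bm{v}^*} + \gnorm{\bm{v}^*-\bm{v}_h}$. The first term is handled directly by Lemma \ref{le_gnormv} applied to $\bm{v}^*$, producing $C h^s \|\bm{v}^*\|_{H^{s+1}(\Omega^*)}$. For $\gnorm{\bm{v}^*-\bm{v}_h}$, I would expand the integrand of \eqref{eq_gh} by the triangle inequality: the unextended piece is controlled by the $H^1$-part of the master bound, while for the extension piece I would use a polynomial inverse estimate on $E_{K^\circ}(\bm{v}^*-\bm{v}_h)$ combined with the $L^2$-stability \eqref{eq_vEKL2} and the $L^2$-boundedness of $\Pi_{K^\circ}$, obtaining $\|\nabla\times E_{K^\circ}(\bm{v}^*-\bm{v}_h)\|_{L^2(K)} \leq C h_K^{-1}\|\bm{v}^*-\bm{v}_h\|_{L^2(K^\circ)} \leq C h_K^{s}\|\bm{v}^*\|_{H^{s+1}(\Delta(K^\circ))}$. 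The main obstacle throughout is keeping constants independent of how $\Gamma$ slices through each cut element, which is exactly why the uniform trace estimate \eqref{eq_interfaceH1trace} together with Assumption \ref{as_mesh2} (pairing $K \in \MThG$ with a nearby interior neighbour $K^\circ$ at uniform cost) is essential.
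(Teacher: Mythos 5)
Your proposal is correct and follows essentially the same route as the paper's proof: construct $\bm{v}_h$ as an interpolant of the Sobolev extension $\Es\bm{v}$, bound the energy-norm terms by standard approximation plus the cut trace estimate \eqref{eq_interfaceH1trace}, and control the stabilization seminorm on cut elements via the stability of the local extension operator together with local polynomial approximation of $\Es\bm{v}$. The only minor variations are cosmetic: you use the Scott--Zhang rather than the Lagrange interpolant, and for $\gnorm{\bm{v}_h}$ you split off $\gnorm{\Es\bm{v}}$ via Lemma \ref{le_gnormv} and bound $\|\nabla\times E_{K^\circ}(\Es\bm{v}-\bm{v}_h)\|_{L^2(K)}$ by an inverse inequality on the ball combined with \eqref{eq_vEKL2}, whereas the paper inserts a local polynomial approximant on $B(\bm{x}_{K^\circ},C_\Delta h_{K^\circ})$ and invokes the curl-stability \eqref{eq_vEKcurlp}; both arguments are equivalent in substance and yield the same bounds.
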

\begin{proof}
  Let $\bm{v}_h$ be the Lagrange interpolant of $\Es \bm{v}$ into the
  space $\Vh$, where $\Es$ is defined as \eqref{eq_Eextension}. For
  any element $K \in \MThG$, there exists $\wt{\bm{v}}_h \in
  \mb{P}_{r}(B(\bm{x}_{K^\circ}, C_\Delta h_{K^\circ}))^d$ such that 
  \begin{displaymath}
    \|\nabla \times (\Es \bm{v} - \wt{\bm{v}}_h)
    \|_{L^2(B(\bm{x}_{K^\circ}, C_\Delta h_{K^\circ}))} \leq C
    h_{K^\circ}^{s} \|\Es \bm{v} \|_{H^{s+1}(B(\bm{x}_{K^\circ},
    C_\Delta h_{K^\circ}))}.
  \end{displaymath}
  The term $\gnorm{\bm{v}_h}$ can be bounded as in Lemma
  \ref{le_gnormv}, i.e.
  \begin{align*}
    &\gnorm{\bm{v}_h}^2 =  \sum_{K \in \MThG} \|\nabla \times
    \bm{v}_h -   \nabla \times E_{K^\circ}\bm{v}_h \|_{L^2(K)}^2 \\
    & \leq C \sum_{K \in \MThG} ( \| \nabla \times \bm{v}_h - \nabla
    \times \wt{\bm{v}}_h \|_{L^2(K)}^2 +  \|\nabla \times
    \wt{\bm{v}}_h - \nabla \times E_{K_0^\circ}\bm{v}_h
    \|_{L^2(K)}^2)\\
    & \leq C h^{r} \|E^* \bm{v} \|_{H^{r+1}(\Omega^*)} +  C \sum_{K
    \in \MThG}  \|\nabla \times \wt{\bm{v}}_h - \nabla \times
    \bm{v}_h \|_{L^2(K^{\circ})}^2 \\
    & \leq C h^{r} \|E^* \bm{v}
    \|_{H^{r+1}(\Omega^*)}  \leq Ch^{r} \| \bm{v}
    \|_{H^{r+1}(\Omega)}. 
  \end{align*}
  The approximation errors $\anorm{\bm{v} - \bm{v}_h}$ and $\|\bm{v} -
  \bm{v}_h \|_{L^2(\Omega)}$ can be bounded in a standard procedure,
  by following the Sobolev extension operator \eqref{eq_Eextension},
  the approximation property of $\bm{v}_h$ and the inequality $kh \leq
  C$. Consequently, the approximation estimate \eqref{eq_Anormapp} is
  reached, which completes the proof.
\end{proof}
\begin{lemma}
  For any $q \in H^{t+1}(\Omega)(t \geq 0)$, there exists $q_h \in
  \Qh$ such that
  \begin{equation}
    \Cnorm{q - q_h} \leq C h^{s + 1} \|q\|_{H^{s+1}(\Omega)}, \quad
    \jnorm{q_h} \leq C h^{s+1} \| q \|_{H^{s+1}(\Omega)},
    \label{eq_cnormapp}
  \end{equation}
  where $s = \min(t, m)$.
  \label{le_cnormapp}
\end{lemma}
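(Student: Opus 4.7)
The plan is to mirror the proof of Lemma \ref{le_Anormapp} almost verbatim, since only scalar quantities and $L^2$-type extension stabilization are involved instead of the $\curl$-version. First I would apply the Sobolev extension \eqref{eq_Eextension} to obtain $\Es q \in H^{s+1}(\Omega^*)$ and define $q_h \in \Qh$ to be the Lagrange (or Scott--Zhang) interpolant of $\Es q$ on the background mesh $\MTh$. This immediately gives on each $K \in \MTh$ the standard interpolation bound $\|\Es q - q_h\|_{L^2(K)} + h_K\|\nabla(\Es q - q_h)\|_{L^2(K)} \leq Ch_K^{s+1}\|\Es q\|_{H^{s+1}(\wt{K})}$, with $\wt{K}$ a small patch around $K$.

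Next I would estimate $\Cnorm{q - q_h}$ piece by piece. The bulk term $\|q - q_h\|_{L^2(\Omega)} \leq \|\Es q - q_h\|_{L^2(\Oh)}$ gives the $h^{s+1}$ rate directly from local interpolation and the stability $\|\Es q\|_{H^{s+1}(\Omega^*)} \leq C\|q\|_{H^{s+1}(\Omega)}$. For the face jump terms $\sum_f h_f\|\jump{q-q_h}\|_{L^2(f^0)}^2$ and the boundary curve terms $\sum_{K \in \MThG} h_K\|\jump{q - q_h}\|_{L^2(\Gamma_K)}^2$, I would apply the standard trace inequality on interior faces together with the unfitted trace estimate \eqref{eq_interfaceH1trace} on $\Gamma_K$, reducing each to local $L^2$ and $H^1$ errors of the interpolant; the same trace arguments handle the average term $\sum_f h_f\|\aver{q_h}\|_{L^2(f^0)}^2$ (bounded against $\|q_h\|_{L^2(\Omega^*)}^2$ and hence by $\|q\|_{H^{s+1}(\Omega)}$). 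These all sum to yield $\Cnorm{q - q_h} \leq Ch^{s+1}\|q\|_{H^{s+1}(\Omega)}$.

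The more delicate part, and the expected main obstacle, is bounding $\jnorm{q_h}$: the definition involves $q_h - E_{K^\circ} q_h$ on cut elements $K \in \MThG$, and I cannot directly invoke Lemma \ref{le_jnormv} since it applies to a single smooth function rather than to $q_h$. I would proceed as in the $\gnorm{\cdot}$ estimate in Lemma \ref{le_Anormapp}: for each $K \in \MThG$, choose $\wt{q}_h \in \mb{P}_m(B(\bm{x}_{K^\circ}, C_\Delta h_{K^\circ}))$ approximating $\Es q$ to order $h^{s+1}$ on that ball, then write
\begin{equation*}
\|q_h - E_{K^\circ} q_h\|_{L^2(K)} \leq \|q_h - \wt{q}_h\|_{L^2(K)} + \|\wt{q}_h - E_{K^\circ} q_h\|_{L^2(K)}.
\end{equation*}
The first piece is a standard interpolation error on the patch. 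For the second, since $\wt{q}_h$ is already a polynomial on $B(\bm{x}_{K^\circ}, C_\Delta h_{K^\circ})$ one has $\wt{q}_h - E_{K^\circ} q_h = E_{K^\circ}(\wt{q}_h - q_h)$, and Lemma \ref{le_EkL2p} bounds its $L^2(K)$-norm by $C\|\wt{q}_h - q_h\|_{L^2(K^\circ)}$, again an interpolation error.

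Squaring and summing over $K \in \MThG$, combined with the Sobolev stability $\|\Es q\|_{H^{s+1}(\Omega^*)} \leq C\|q\|_{H^{s+1}(\Omega)}$, yields $\jnorm{q_h} \leq Ch^{s+1}\|q\|_{H^{s+1}(\Omega)}$, completing the proof. I expect no new ideas beyond those already used in Lemma \ref{le_Anormapp} and Lemma \ref{le_jnormv}; the argument is essentially their scalar $L^2$-analogue.
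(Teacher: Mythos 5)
Your proposal follows essentially the same route the paper intends: the paper's proof of Lemma \ref{le_cnormapp} is literally ``similar to that of Lemma \ref{le_Anormapp}'', i.e.\ interpolate the Sobolev extension $\Es q$, bound the bulk, jump and boundary terms of $\Cnorm{q-q_h}$ by local interpolation plus the trace estimates, and treat $\jnorm{q_h}$ exactly as $\gnorm{\bm{v}_h}$ is treated there, via a local polynomial $\wt{q}_h$ on $B(\bm{x}_{K^\circ},C_\Delta h_{K^\circ})$ and the stability of $E_{K^\circ}$ from Lemma \ref{le_EkL2p}; your handling of that term is the right scalar analogue.

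One slip to fix: the last contribution to $\Cnorm{q-q_h}$ is $\sum_{f\in\MFhI}h_f\|\aver{q-q_h}\|_{L^2(f^0)}^2$, the average of the \emph{error}, not of $q_h$. Bounding $\sum_f h_f\|\aver{q_h}\|_{L^2(f^0)}^2$ by $\|q_h\|_{L^2(\Omega^*)}^2$ as you wrote is true but carries no power of $h$, so it cannot produce the $h^{s+1}$ rate. Instead apply the same trace inequality you already invoke for the jump terms to $q-q_h$ elementwise, which gives $h_f\|\aver{q-q_h}\|_{L^2(f^0)}^2\leq C(\|q-q_h\|_{L^2(K)}^2+h_K^2\|\nabla(q-q_h)\|_{L^2(K)}^2)$ and hence the desired $O(h^{2(s+1)})$ bound. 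With that correction the argument is complete and matches the paper.
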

\begin{proof}
  The proof is similar to that of Lemma \ref{le_Anormapp}.
\end{proof}
We then state the Galerkin orthogonality of $A_h(\cdot; \cdot)$
and $B_h(\cdot; \cdot)$.
\begin{lemma}
  Let $(\bm{u}, p) \in H^2(\Omega)^d \times H^1(\Omega) $ be the exact
  solution to the problem \eqref{eq_Maxwell}, and let $(\bm{u}_h, p_h)
  \in \Vh \times \Qh$ be the numerical solution, then there hold
  \begin{equation}
    \begin{aligned}
      A_h(\bm{u} - \bm{u}_h, p-p_h; \bm{v}_h, q_h) + B_h(\bm{v}_h,
      q_h; p - p_h) - \coek(\bm{u} - \bm{u}_h, \bm{v}_h) & =
      g_h(\bm{u}_h, \bm{v}_h), &&\forall (\bm{v}_h, q_h) \in \Vh
      \times \Qh \\
      B_h(\bm{u} - \bm{u}_h, p-p_h; q_h) & = -j_h(p, q_h), &&\forall
       q_h \in \Qh.\\
    \end{aligned}
    \label{eq_Galerkinorth}
  \end{equation}
  \label{le_Galerkinorth}
\end{lemma}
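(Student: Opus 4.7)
The plan is to derive \eqref{eq_Galerkinorth} by subtracting the discrete system \eqref{eq_amixedform} from two consistency identities satisfied by the exact pair $(\bm{u},p)$ (under its Sobolev extension via $\Es$ wherever the bilinear forms demand evaluation outside $\Omega$). Because of the sign cancellation between the $c_h$ and $j_h$ contributions appearing in $A_h$ and $B_h$, the first identity of \eqref{eq_Galerkinorth} reduces to the single consistency relation
\[
  a_h(\bm{u}, \bm{v}_h) + b_h(\bm{v}_h, p) - \coek (\bm{u}, \bm{v}_h) = l_h(\bm{v}_h),
\]
and the second reduces to $b_h(\bm{u}, q_h) - c_h(p, q_h) = 0$. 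The stabilization contributions $g_h(\bm{u}_h, \bm{v}_h)$ and $-j_h(p, q_h)$ remain on the right because they sit in the discrete scheme without a continuous counterpart.

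I would dispatch the second (easier) identity first. The constraint $\nabla \cdot (\coed \bm{u}) = 0$ annihilates the volume part of $b_h(\bm{u}, q_h)$, and the regularity $\bm{u} \in H^2(\Omega)^d$ together with smoothness of $\coed$ yields $\coed \bm{u} \in H^1(\Omega)^d$, so $\jump{\un \cdot (\coed \bm{u})}|_{f^0} = 0$ on every $f \in \MFhI$; hence $b_h(\bm{u}, q_h) = 0$. For $c_h(p, q_h)$, the $H^1$-regularity of the Sobolev extension of $p$ kills interior-face jumps, while on $\Gamma_K$ the modified jump $\jump{p} = \un p|_{\Gamma_K}$ vanishes by the boundary condition $p = 0$; thus $c_h(p, q_h) = 0$ as well.

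For the first consistency identity, I would apply element-wise integration by parts on each $K^0 = K \cap \Omega$. Using the vector identity $\int_{K^0}(\nabla \times \bm{A}) \cdot \bm{B} \, \mathrm{d} \bm{x} = \int_{K^0} \bm{A} \cdot (\nabla \times \bm{B}) \, \mathrm{d} \bm{x} + \int_{\partial K^0}(\bm{A} \times \un) \cdot \bm{B} \, \mathrm{d} \bm{s}$ with $\bm{A} = \coec \nabla \times \bm{u}$, together with the divergence theorem applied to $\int_{K^0} \nabla \cdot(\coed \bm{v}_h)\, p \, \mathrm{d} \bm{x}$, and then substituting the strong form $\nabla \times (\coec \nabla \times \bm{u}) - \coek \coed \bm{u} - \coed \nabla p = \bm{j}$, the volume part of $a_h(\bm{u}, \bm{v}_h) + b_h(\bm{v}_h, p) - \coek(\bm{u}, \bm{v}_h)$ collapses to $\sum_{K \in \MTh} \int_{K^0} \bm{j} \cdot \bm{v}_h \, \mathrm{d} \bm{x}$, which is the volume term of $l_h(\bm{v}_h)$.

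The main obstacle is the reassembly of the element-boundary integrals into the exact surface structure of $a_h$ and $l_h$. On an interior face $f^0$, the two contributions from the adjacent elements combine through the standard DG telescoping identity; the single-valuedness of $\coec \nabla \times \bm{u}$ and $p$ across $f^0$ lets us replace traces by averages, reproducing exactly the terms $-\aver{\coec \nabla \times \bm{u}} \cdot \jump{\un \times \bm{v}_h}$ and $-\jump{\un \cdot (\coed \bm{v}_h)} \aver{p}$ from $a_{h,0}$ and $b_h$, while the Nitsche symmetrization and penalty terms in $a_h$ that multiply $\jump{\un \times \bm{u}}|_{f^0}$ or $\jump{\un \cdot (\coed \bm{u})}|_{f^0}$ simply vanish. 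On each $\Gamma_K$ with $K \in \MThG$, substitution of $\un \times \bm{u} = \bm{g}$ and $p|_{\Gamma_K} = 0$ converts the Nitsche symmetry term and the penalty term in $a_h(\bm{u}, \bm{v}_h)$ into precisely the $\bm{g}$-dependent integrals appearing in $l_h(\bm{v}_h)$. The sign-tracking and the verification that the penalty coefficient $\alpha h_K^{-1}$ carries over unchanged in this last reassembly is the one step requiring genuine care.
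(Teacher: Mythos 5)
Your proposal is correct and is essentially the paper's own argument: the paper's proof is the one-line observation that \eqref{eq_Galerkinorth} follows from the definitions of $A_h$, $B_h$ and the continuity/boundary conditions of the exact solution, and your expansion (cancellation of the $c_h$ and $j_h$ contributions between $A_h$ and $B_h$, elementwise integration by parts against the strong form, vanishing of the exact-solution jumps across $f^0$, and substitution of $\un\times\bm{u}=\bm{g}$ and $p=0$ on $\Gamma$, with the stabilization terms $g_h(\bm{u}_h,\bm{v}_h)$ and $-j_h(p,q_h)$ left over) is precisely that consistency computation. One caveat outside your control: with $l_h$ exactly as printed the boundary penalty datum enters as $-\alpha h_f^{-1}\jump{\un\times\bm{v}_h}\cdot\bm{g}$, whereas $a_{h,1}(\bm{u},\bm{v}_h)$ produces $+\alpha h_K^{-1}\bm{g}\cdot\jump{\un\times\bm{v}_h}$, so the final matching you flag as ``requiring genuine care'' goes through only after reading the sign (and $h_f$ versus $h_K$) in the paper's definition of $l_h$ as a typo.
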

\begin{proof}
  The equation \eqref{eq_Galerkinorth} follows from the continuity
  condition of the exact solution and the definitions of $A_h(\cdot;
  \cdot)$ and $B_h(\cdot; \cdot)$.
\end{proof}
Based on Theorem \ref{th_infsup}, Lemma \ref{le_continuity} and Lemma
\ref{le_acoercivity}, we can prove the main results of Theorem
\ref{th_errorestimate}, which are given in Theorems \ref{th_DGerror}
and \ref{th_L2error}. 
\begin{theorem}
  Let $(\bm{u}, p) \in H^{t + 1}(\Omega)^d \times H^{t}(\Omega)(t \geq
  1)$ be the exact solution to the problem \eqref{eq_Maxwell}, and let
  $a_h(\cdot, \cdot)$ be defined with a sufficiently large $\alpha$,
  and let $(\bm{u}_h, p_h) \in \Vh \times \Qh(r \geq m + 1, m \geq
  0)$ be the numerical solution, then there holds
  \begin{equation}
    \begin{aligned}
      \Anorm{\bm{u} - \bm{u}_h} + \Cnorm{p - p_h} \leq
      C h^{s} (\|\bm{u} \|_{H^{s+1}(\Omega)} + \|p_h
      \|_{H^{s}(\Omega)}) + Ck\|\bm{u} - \bm{u}_h \|_{L^2(\Omega)},
    \end{aligned} 
    \label{eq_DGerror}
  \end{equation}
  where $s = \min(t, m + 1)$. 
  \label{th_DGerror}
\end{theorem}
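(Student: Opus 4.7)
The plan is to follow the classical mixed-method error analysis, combining Galerkin orthogonality with the inf-sup stability of Theorem \ref{th_infsup} and the G\aa rding-type coercivity \eqref{eq_Acoercivity}; the sign-indefinite term $-\coek(\bm{u}_h,\bm{v}_h)$ is treated as a lower-order perturbation, which will ultimately produce the residual $Ck\|\bm{u}-\bm{u}_h\|_{L^2(\Omega)}$ on the right-hand side of \eqref{eq_DGerror}.

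First I would decompose the error as $\bm{u}-\bm{u}_h = \bm{\eta}_u + \bm{\xi}_u$ and $p-p_h = \eta_p + \xi_p$, where $(\tilde{\bm{u}}_h,\tilde{p}_h) \in \Vh \times \Qh$ are the interpolants from Lemmas \ref{le_Anormapp} and \ref{le_cnormapp}, with $\bm{\eta}_u := \bm{u}-\tilde{\bm{u}}_h$, $\eta_p := p-\tilde{p}_h$, $\bm{\xi}_u := \tilde{\bm{u}}_h-\bm{u}_h \in \Vh$, $\xi_p := \tilde{p}_h-p_h \in \Qh$. The approximation lemmas already give the required $Ch^s(\|\bm{u}\|_{H^{s+1}(\Omega)} + \|p\|_{H^s(\Omega)})$ control on $\Anorm{\bm{\eta}_u} + \Cnorm{\eta_p}$, so by the triangle inequality it suffices to estimate the discrete part $\Anorm{\bm{\xi}_u} + \Cnorm{\xi_p}$. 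Substituting the decomposition into the Galerkin identities \eqref{eq_Galerkinorth} yields the perturbed relations
\begin{displaymath}
A_h(\bm{\xi}_u,\xi_p;\bm{v}_h,q_h) + B_h(\bm{v}_h,q_h;\xi_p) - \coek(\bm{\xi}_u,\bm{v}_h) + g_h(\bm{\xi}_u,\bm{v}_h) = R_1(\bm{v}_h,q_h),
\end{displaymath}
\begin{displaymath}
B_h(\bm{\xi}_u,\xi_p;q_h) = R_2(q_h),
\end{displaymath}
where $R_1, R_2$ collect the interpolation residuals, the perturbation $\coek(\bm{\eta}_u,\bm{v}_h)$, and the inconsistency contributions $g_h(\tilde{\bm{u}}_h,\bm{v}_h)$ and $j_h(p,q_h)$. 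Using the continuity Lemma \ref{le_continuity}, the approximation Lemmas \ref{le_Anormapp} and \ref{le_cnormapp}, the sharper seminorm bounds of Lemmas \ref{le_gnormv} and \ref{le_jnormv}, and the mesh hypothesis $hk \leq C$ (to absorb $k^2\|\bm{\eta}_u\|_{L^2(\Omega)}$ into $Ch^s$), each $|R_i|$ is dominated by $Ch^s(\|\bm{u}\|_{H^{s+1}(\Omega)}+\|p\|_{H^s(\Omega)})\Wnorm{\bm{v}_h,q_h}$.

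Next, testing the first identity with $(\bm{v}_h,q_h)=(\bm{\xi}_u,\xi_p)$ and invoking the coercivity \eqref{eq_Acoercivity} gives
\begin{displaymath}
\Anorm{\bm{\xi}_u}^2 + \cseminorm{\xi_p}^2 \leq C|R_1(\bm{\xi}_u,\xi_p)| + C\coek\|\bm{\xi}_u\|_{L^2(\Omega)}^2.
\end{displaymath}
After a Young inequality and the bound $\|\bm{\xi}_u\|_{L^2(\Omega)} \leq \|\bm{u}-\bm{u}_h\|_{L^2(\Omega)} + \|\bm{\eta}_u\|_{L^2(\Omega)}$, this produces $\Anorm{\bm{\xi}_u}^2 + \cseminorm{\xi_p}^2 \leq Ch^{2s}(\|\bm{u}\|_{H^{s+1}(\Omega)}^2+\|p\|_{H^s(\Omega)}^2) + Ck^2\|\bm{u}-\bm{u}_h\|_{L^2(\Omega)}^2$. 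To recover the missing $\|\xi_p\|_{L^2(\Omega)}$ part of $\Cnorm{\xi_p}$, I would apply the inf-sup Theorem \ref{th_infsup} by rearranging the first identity to express $B_h(\bm{v}_h,q_h;\xi_p)$ in terms of quantities already bounded via $\Anorm{\bm{\xi}_u}$, $R_1$, and $k\|\bm{u}-\bm{u}_h\|_{L^2(\Omega)}$; dividing by $\Wnorm{\bm{v}_h,q_h}$ and taking the supremum then delivers $\Cnorm{\xi_p}$ in terms of these same quantities. Collecting everything and applying the triangle inequality yields \eqref{eq_DGerror}.

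The main obstacle will be the bookkeeping inside $R_1$ and $R_2$: in particular, one must exploit the sharp seminorm estimates of Lemmas \ref{le_gnormv} and \ref{le_jnormv} so that the inconsistency terms $g_h(\tilde{\bm{u}}_h,\cdot)$ and $j_h(p,\cdot)$ do not spoil the $h^s$ rate, and must carefully isolate the $\coek$-perturbation into a single term producing the $Ck\|\bm{u}-\bm{u}_h\|_{L^2(\Omega)}$ residual, so that the coercivity and inf-sup estimates can be combined without circular dependence. This residual will then be removed in the companion duality argument of Theorem \ref{th_L2error}.
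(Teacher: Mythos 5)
Your ingredients are the same as the paper's (Galerkin orthogonality \eqref{eq_Galerkinorth}, the G\aa rding-type coercivity \eqref{eq_Acoercivity}, the inf-sup stability \eqref{eq_infsup}, the approximation Lemmas \ref{le_Anormapp}--\ref{le_cnormapp} and $hk\le C$), but the organization is genuinely different. The paper does not split the error into interpolation and discrete parts: it first restricts the trial/test pair to the discrete kernel $\Ker(B_h)$, where the coupling term $B_h(\bm{v}_h-\bm{u}_h,q_h-p_h;\cdot)$ vanishes on discrete third arguments, and obtains a quasi-best-approximation bound over $\Ker(B_h)$; it then uses the inf-sup condition to build a corrector $(\bm{w}_h,r_h)$ showing that the kernel infimum is comparable to the infimum over all of $\Vh\times\Qh$ up to $\jnorm{p}$; finally it bounds $\Cnorm{p-p_h}$ by the same inf-sup/orthogonality argument you sketch in your last step. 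Your route (interpolant splitting, discrete error equations, coercivity tested with the discrete error, closure via the inf-sup bound for the pressure plus an absorption argument) is the other standard way to run the Brezzi machinery and does lead to \eqref{eq_DGerror}; what the paper's kernel trick buys is precisely that the awkward coupling term never appears, at the price of the extra corrector construction.

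The one concrete gap is in your displayed coercivity consequence. Testing the first discrete-error identity with $(\bm{v}_h,q_h)=(\bm{\xi}_u,\xi_p)$ leaves the term $B_h(\bm{\xi}_u,\xi_p;\xi_p)$ on the left, which your inequality silently drops; in the paper this term is absent only because the test pair lies in $\Ker(B_h)$. In your setting it does not vanish and must be eliminated through the second identity, $B_h(\bm{\xi}_u,\xi_p;\xi_p)=R_2(\xi_p)$, and $R_2(\xi_p)$ contains $b_h(\bm{\eta}_u,\xi_p)$, hence $\|\xi_p\|_{L^2(\Omega)}$, i.e.\ the full $\Cnorm{\xi_p}$, which the coercivity does not control (it only yields $\cseminorm{\xi_p}$). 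So the absorption is not optional bookkeeping: you must first establish the inf-sup bound $\Cnorm{\xi_p}\le C\big(\Anorm{\bm{\xi}_u}+\cseminorm{\xi_p}+h^{s}(\|\bm{u}\|_{H^{s+1}(\Omega)}+\|p\|_{H^{s}(\Omega)})+k\|\bm{u}-\bm{u}_h\|_{L^2(\Omega)}\big)$ -- which is non-circular because $A_h(\bm{\xi}_u,\xi_p;\cdot,\cdot)$ involves $\xi_p$ only through $c_h$ and $j_h$, hence only $\cseminorm{\xi_p}$ -- and then substitute it into the coercivity estimate with a small Young parameter before absorbing. This is exactly the circularity you flag at the end, but it originates in the $B_h$-coupling rather than in the $\coek$ term; once this step is stated and carried out, your argument closes and yields \eqref{eq_DGerror}.
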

\begin{proof}
  We first take $(\bm{v}_h, q_h) \in \Ker(B_h) := \{
  (\bm{w}_h, \nu_h) \in \Vh \times \Qh \ | \ B_h(\bm{w}_h,
  {\nu}_h; r_h) = 0, \ \forall r_h \in \Qh \}$. 
  From the coercivity \eqref{eq_Acoercivity} and the
  orthogonality \eqref{eq_Galerkinorth}, we have that 
  \begin{displaymath}
    \begin{aligned}
      C (\Anorm{\bm{v}_h - \bm{u}_h&}^2 + \cseminorm{q_h - p_h}^2 -
      \coek \| \bm{v}_h - \bm{u}_h \|_{L^2(\Omega)}^2) \\
      \leq &A_h(\bm{v}_h - \bm{u}_h, {q}_h- p_h;
      \bm{v}_h - \bm{u}_h, q_h - p_h) + g_h(
      \bm{v}_h - \bm{u}_h, \bm{v}_h - \bm{u}_h) \\
       \leq & A_h(\bm{v}_h - \bm{u}, q_h - p;
      \bm{v}_h - \bm{u}_h, q_h - p_h) - B_h(\bm{v}_h
      - \bm{u}_h, q_h - p_h; p - r_h) \\
      &+ \coek(\bm{u} - \bm{u}_h, \bm{v}_h - \bm{u}_h)
      + g_h(\bm{v}_h, \bm{v}_h - \bm{u}_h),
    \end{aligned}
  \end{displaymath}
  for $\forall r_h \in \Qh$. Applying the triangle inequality
  and  the continuity results in Lemma \ref{le_continuity} brings us
  that
  \begin{displaymath}
    \begin{aligned}
      \Anorm{\bm{v}_h - \bm{u}_h} + \cseminorm{q_h - p_h} \leq
      C (\Wnorm{\bm{u}  - \bm{v}_h, p - q_h}  + \Cnorm{p - r_h} +
      k \|\bm{u} - \bm{u}_h\|_{L^2(\Omega)} + \gnorm{\bm{u}}).
    \end{aligned}
  \end{displaymath}
  Then we obtain
  \begin{equation}
    \begin{aligned}
      \Anorm{\bm{u} - \bm{u}_h} + \cseminorm{p - p_h}  \leq  & C \Big(
      \inf_{(\bm{v}_h, q_h) \in \Ker(B_h)}\Wnorm{\bm{u} -
      \bm{v}_h, p - q_h} +  k \|\bm{u} -
       \bm{u}_h \|_{L^2(\Omega)} +  \gnorm{\bm{u}} \Big).
    \end{aligned}
    \label{eq_uulKerinf}
  \end{equation}
  It remains to bound the error $\inf_{(\bm{v}_h, q_h) \in
  \Ker(B_h)} \Wnorm{\bm{u} - \bm{v}_h, p - q_h}$. Fix  
  $(\bm{v}_h, q_h) \in \Vh \times \Qh$, and let $(\bm{w}_h,
  {r}_h)  \in \Vh \times \Qh$ be the solution of the problem 
  \begin{displaymath}
    B_h(\bm{w}_h, {r}_h; t_h) = B_h(\bm{u} - \bm{v}_h, p -q_h; t_h)
    + j_h(p, t_h),
    \quad \forall t_h \in \Qh.
  \end{displaymath}
  The inf-sup condition \eqref{eq_infsup} ensures the existence of the
  solution $(\bm{w}_h, r_h)$, which satisfies 
  \begin{displaymath}
    \Wnorm{\bm{w}_h, r_h} \leq C \sup_{t_h \in \Qh}
    \frac{B_h(\bm{u} - \bm{v}_h, p - q_h; t_h) + j_h(p,
    t_h)}{\Cnorm{t_h}} \leq C  (\Wnorm{\bm{u} - \bm{v}_h, p - q_h} +
    \jnorm{p}). 
  \end{displaymath}
  Note that $B_h(\bm{u}, p; t_h) + j_h(p, t_h) = 0$,  which implies
  $(\bm{w}_h + \bm{v}_h, r_h + q_h) \in \Ker(B_h)$.
  Thus, we obtain 
  \begin{displaymath}
    \begin{aligned}
      \Wnorm{\bm{u} - (\bm{w}_h + \bm{v}_h), p - (r_h + q_h)}
      \leq C  (\Wnorm{\bm{u} - \bm{v}_h, p - p_h} + \jnorm{p}).
    \end{aligned}
  \end{displaymath}
  Combining \eqref{eq_uulKerinf}, we arrive at 
   \begin{equation}
    \begin{aligned}
      \Anorm{\bm{u} - \bm{u}_h} + \cseminorm{p - p_h}  \leq C&\Big(
      \inf_{(\bm{v}_h, q_h) \in  \Vh \times \Qh}
      \Wnorm{\bm{u} - \bm{v}_h, p - q_h}+  k \|\bm{u} -
       \bm{u}_h \|_{L^2(\Omega)} +  \gnorm{\bm{u}} + \jnorm{p} \Big).
    \end{aligned}
    \label{eq_uulinf}
  \end{equation}
  Now we turn to the error $\Cnorm{p - p_h}$. 
  From the inf-sup stability \eqref{eq_infsup}, we obtain 
  \begin{displaymath}
    \begin{aligned}
      &\Cnorm{p_h - q_h} \leq C \sup_{(\bm{v}_h, t_h) \in
      \Vh \times \Qh} \frac{B(\bm{v}_h, t_h;
      q_h - p_h)}{ \Wnorm{\bm{v}_h, t_h}}, \\
    \end{aligned} \quad \forall q_h \in \Qh.
  \end{displaymath}
  The Galerkin orthogonality \eqref{eq_Galerkinorth} gives 
  \begin{displaymath}
    \begin{aligned}
      B(\bm{v}_h, t_h; q_h - p_h) = -& A_h(\bm{u} - \bm{u}_h,
      p - p_h; \bm{v}_h, t_h)  - B_h(\bm{v}_h, t_h; p - q_h) + \coek(\bm{u} - \bm{u}_h,
      \bm{v}_h)  + g_h(\bm{u}_h, \bm{v}_h) \\
      \leq &C  \Wnorm{\bm{v}_h, t_h} (\Anorm{\bm{u} - \bm{u}_h} +
      \cseminorm{ p -
      p_h}) + C \Cnorm{p_h - q_h} \Wnorm{\bm{v}_h, t_h} \\
      &+  k \Wnorm{\bm{v}_h, t_h} \|\bm{u} -
      \bm{u}_h\|_{L^2(\Omega)}  + g_h(\bm{u}_h, \bm{v}_h).
    \end{aligned}
  \end{displaymath}
  For the last term, we have that 
  \begin{displaymath}
    \begin{aligned}
      g_h(\bm{u}_h, \bm{v}_h) & \leq (\gnorm{\bm{u} - \bm{u}_h} +
      \gnorm{\bm{u}}) \gnorm{\bm{v}_h} \leq C ( \Anorm{\bm{u} -
      \bm{u}_h}  + \gnorm{\bm{u}}) \Wnorm{\bm{v}_h,
      t_h}. \\
    \end{aligned}
  \end{displaymath}
  Combing the triangle inequality and all above estimates leads to
  \begin{equation}
    \begin{aligned}
      \Cnorm{p - p_h} \leq C (\Anorm{\bm{u} - \bm{u}_h} + \cseminorm{
      p - p_h}
      +  \gnorm{\bm{u}} +  \inf_{q_h \in \Qh} \Cnorm{p - q_h} +  k
      \|\bm{u} - \bm{u}_h \|_{L^2(\Omega)}).
    \end{aligned}
    \label{eq_pphinf}
  \end{equation}
  From \eqref{eq_uulinf} and the approximation estimates
  \eqref{eq_Anormapp}, \eqref{eq_cnormapp} and \eqref{eq_gnormv}, we
  immediately arrive at the desired estimate \eqref{eq_DGerror}, which
  completes the proof.
\end{proof}
\begin{theorem}
  Under the conditions in Theorem \ref{th_DGerror}, there holds
  \begin{equation}
    \begin{aligned}
      \|\bm{u} - \bm{u}_h\|_{L^2(\Omega)} \leq C_0 h (\Anorm{\bm{u}
      - \bm{u}_h} + \Cnorm{p - p_h}) + C_1 h^{s+1}(
      \|\bm{u} \|_{H^{s+1}(\Omega)} + \| p \|_{H^s(\Omega)}),
    \end{aligned}
    \label{eq_L2error}
  \end{equation}
  where $s = \min(t, m+1)$ and the constants $C_0$, $C_1$ depend on
  $\Cr$.
  \label{th_L2error}
\end{theorem}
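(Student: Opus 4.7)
The plan is to carry out an Aubin--Nitsche duality argument. First I would introduce the dual problem: find $(\bm{\phi}, \psi) \in H_0^1(\curl, \Omega) \times H_0^1(\Omega)$ such that
\begin{equation*}
\nabla \times (\coec \nabla \times \bm{\phi}) - \coek \coed \bm{\phi} - \coed \nabla \psi = \bm{u} - \bm{u}_h, \quad \nabla \cdot (\coed \bm{\phi}) = 0 \text{ in } \Omega,
\end{equation*}
with $\psi = 0$ and $\un \times \bm{\phi} = \bm{0}$ on $\Gamma$. Since $a_h$ and $b_h$ are essentially self-adjoint in structure, Theorem \ref{th_regularity} applied to this dual problem yields $(\bm{\phi}, \psi) \in H^2(\Omega)^d \times H^1(\Omega)$ with the stability bound $\|\bm{\phi}\|_{H^2(\Omega)} + \|\psi\|_{H^1(\Omega)} \leq \Cr \|\bm{u} - \bm{u}_h\|_{L^2(\Omega)}$, which is where the constant $C_1$ depending on $\Cr$ will enter.

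Next I would multiply the dual PDE by $\bm{u} - \bm{u}_h$ and integrate by parts, exploiting the smoothness of $(\bm{\phi}, \psi)$ so that the interelement jump and Nitsche boundary terms in $A_h$ and $B_h$ reduce consistently. This gives the representation
\begin{equation*}
\|\bm{u} - \bm{u}_h\|_{L^2(\Omega)}^2 = A_h(\bm{u} - \bm{u}_h, p - p_h; \bm{\phi}, \psi) + B_h(\bm{\phi}, \psi; p - p_h) - \coek(\bm{u} - \bm{u}_h, \bm{\phi}) + B_h(\bm{u}-\bm{u}_h, p-p_h; \psi),
\end{equation*}
where the last term uses $\nabla \cdot (\coed \bm{\phi}) = 0$ and $\psi \in H_0^1(\Omega)$ to recover the missing contribution from $\coed \nabla \psi$. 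I would then subtract discrete interpolants $(\bm{\phi}_h, \psi_h) \in \Vh \times \Qh$ of $(\bm{\phi}, \psi)$, furnished by Lemmas \ref{le_Anormapp} and \ref{le_cnormapp} with $t = 1$, $s = 1$, and invoke the Galerkin orthogonality \eqref{eq_Galerkinorth} to replace $(\bm{\phi}, \psi)$ by $(\bm{\phi} - \bm{\phi}_h, \psi - \psi_h)$ at the expense of the stabilization residuals $g_h(\bm{u}_h, \bm{\phi}_h)$ and $j_h(p, \psi_h)$. Applying the continuity estimates of Lemma \ref{le_continuity} together with the approximation bound $\Wnorm{\bm{\phi} - \bm{\phi}_h, \psi - \psi_h} \leq C h (\|\bm{\phi}\|_{H^2(\Omega)} + \|\psi\|_{H^1(\Omega)}) \leq C \Cr h \|\bm{u} - \bm{u}_h\|_{L^2(\Omega)}$ controls the principal terms by $C \Cr h (\Anorm{\bm{u} - \bm{u}_h} + \Cnorm{p - p_h}) \|\bm{u} - \bm{u}_h\|_{L^2(\Omega)}$.

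The main obstacle will be the two non-consistency residuals $g_h(\bm{u}_h, \bm{\phi}_h)$ and $j_h(p, \psi_h)$ coming from the direct-extension stabilizations. For the first I would bound $g_h(\bm{u}_h, \bm{\phi}_h) \leq \gnorm{\bm{u}_h}\gnorm{\bm{\phi}_h}$, split $\gnorm{\bm{u}_h} \leq \gnorm{\bm{u} - \bm{u}_h} + \gnorm{\bm{u}} \leq \Anorm{\bm{u}-\bm{u}_h} + C h^{s} \|\bm{u}\|_{H^{s+1}(\Omega)}$ using Lemma \ref{le_gnormv}, and bound $\gnorm{\bm{\phi}_h} \leq C h \|\bm{\phi}\|_{H^2(\Omega)} \leq C \Cr h \|\bm{u} - \bm{u}_h\|_{L^2(\Omega)}$ via Lemma \ref{le_Anormapp} applied to the dual. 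The term $j_h(p, \psi_h)$ is handled symmetrically through Lemmas \ref{le_jnormv} and \ref{le_cnormapp}. Collecting contributions, every term carries either a factor $h \|\bm{u} - \bm{u}_h\|_{L^2(\Omega)}$ (which is absorbed after dividing) producing the $C_0 h (\Anorm{\bm{u} - \bm{u}_h} + \Cnorm{p - p_h})$ piece, or a factor $h^{s+1}(\|\bm{u}\|_{H^{s+1}(\Omega)} + \|p\|_{H^s(\Omega)}) \|\bm{u} - \bm{u}_h\|_{L^2(\Omega)}$ which, after cancellation of one factor of $\|\bm{u} - \bm{u}_h\|_{L^2(\Omega)}$, produces the higher-order remainder with constant $C_1$ depending on $\Cr$, yielding \eqref{eq_L2error}.
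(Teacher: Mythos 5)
Your proposal follows essentially the same Aubin--Nitsche duality argument as the paper: the same dual Maxwell problem with the regularity bound of Theorem \ref{th_regularity}, subtraction of interpolants from Lemmas \ref{le_Anormapp}--\ref{le_cnormapp}, the Galerkin orthogonality \eqref{eq_Galerkinorth}, the continuity bounds of Lemma \ref{le_continuity}, and the same treatment of the stabilization residuals $g_h(\bm{u}_h,\cdot)$ and $j_h(p,\cdot)$, so it is correct in structure and detail. The only discrepancies are cosmetic bookkeeping: the paper scales the dual right-hand side by $\coek$ and flips the sign of $\coed\nabla\psi$, and the extra term $B_h(\bm{\phi},\psi;p-p_h)$ in your error representation is not zero but equals $-j_h(\psi,p-p_h)$, a consistency defect of size $Ch\|\psi\|_{H^1(\Omega)}\Cnorm{p-p_h}$ that the paper also bounds and absorbs into the $C_0 h$ term.
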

\begin{proof}
  We prove the $L^2$ error estimate by the dual argument. 
  Let $(\bm{z}, \psi)$ be the solution of the problem 
  \begin{displaymath}
    \begin{aligned}
      \nabla \times (\coec \nabla \times \bm{z}) - \coek \coed \bm{z}
      + \coed \nabla \psi  &= \coek(\bm{u} - \bm{u}_h), \quad
      \nabla \cdot (\coed \bm{z}) = 0, \quad \text{in } \Omega, \\
      \un \times \bm{z} & = \bm{0}, \quad \psi = 0, \quad \text{on }
      \Gamma.
    \end{aligned}
  \end{displaymath}
  From Theorem \ref{th_regularity}, we know that $\bm{z} \in
  H^2(\Omega)$ and $\psi \in H^1(\Omega)$ with 
  \begin{displaymath}
    \|\bm{z} \|_{H^2(\Omega)} \leq \Cr \coek \|\bm{u} -
    \bm{u}_h\|_{L^2(\Omega)}, \quad \|\psi \|_{H^1(\Omega)} \leq
    C \coek \| \bm{u} - \bm{u}_h\|_{L^2(\Omega)}.
  \end{displaymath}
  Let $(\bm{z}_h, \psi_h) \in \Vh \times \Qh$ be their interpolation
  functions. Applying integration by parts and 
  the Galerkin orthogonality \eqref{eq_Galerkinorth}, we see that
  \begin{displaymath}
    \begin{aligned}
      \coek \| \bm{u} - &\bm{u}_h \|_{L^2(\Omega)}^2 ={A}_h(\bm{z},
       \psi; \bm{u} - \bm{u}_h, p - p_h) + {B}_h(\bm{u} -
      \bm{u}_h, p - p_h; \psi) - \coek (\bm{u} - \bm{u}_h,
      \bm{z}) \\ 
      &= {A}_h( \bm{u} - \bm{u}_h, p - p_h; \bm{z} -
      \bm{z}_h, \psi - \psi_h)  - {B}_h(\bm{z}_h, \psi_h; p - p_h) +
      g_h(\bm{u}_h, \bm{z}_h) \\ 
      & \quad+ {B}_h(\bm{u} - \bm{u}_h, p - p_h; \psi - \psi_h) -
      \coek (\bm{u} - \bm{u}_h, \bm{z} - \bm{z}_h) - j_h(p, \psi_h) .
    \end{aligned}
  \end{displaymath}
  From Lemma \ref{le_continuity} and the approximation property
  \eqref{eq_Anormapp}, there hold
  \begin{displaymath}
    \begin{aligned}
      A_h( \bm{u} - \bm{u}_h, p - p_h; \bm{z} - \bm{z}_h,
      \psi - \psi_h) &\leq C h\Wnorm{\bm{u} - \bm{u}_h, p - p_h} ( \|
      \bm{z} \|_{H^2(\Omega)} + \| \psi\|_{H^1(\Omega)}), \\
      {B}_h(\bm{u} - \bm{u}_h, p - p_h; \psi - \psi_h) &\leq
      Ch \Wnorm{\bm{u} - \bm{u}_h, p - p_h} \|\psi\|_{H^1(\Omega)}.
    \end{aligned}
  \end{displaymath}
  Note that $B_h(\bm{z}, \psi; p - p_h) = -j_h(\psi, p - p_h)$. We  can
  further deduce that 
  \begin{displaymath}
    \begin{aligned}
      B_h(\bm{z}_h, \psi_h; p - p_h) &= B_h(\bm{z}_h - \bm{z}, \psi_h -
      \psi; p - p_h) - j_h(\psi, p - p_h) \\
      & \leq  Ch(\|\bm{z} \|_{H^2(\Omega)}  + \| \psi
      \|_{H^1(\Omega)})\Cnorm{p - p_h}.
    \end{aligned}
  \end{displaymath}
  From the approximation estimates \eqref{eq_Anormapp} and
  \eqref{eq_cnormapp}, we show that 
  \begin{displaymath}
    \begin{aligned}
      g_h(\bm{u}_h, \bm{z}_h) &= g_h(\bm{u}_h - \bm{u}, \bm{z}_h) +
      g_h(\bm{u}, \bm{z}_h) \leq C \Anorm{\bm{u} - \bm{u}_h}\gnorm{\bm{z}_h} +
      \gnorm{\bm{u}}\gnorm{\bm{z}_h} \\
      & \leq Ch \Anorm{\bm{u} - \bm{u}_h} \|\bm{z}
      \|_{H^2(\Omega)} + Ch^{s+1} \|\bm{u} \|_{H^{s+1}(\Omega)}
      \|\bm{z} \|_{H^2(\Omega)},
    \end{aligned}
  \end{displaymath}
  and 
  \begin{displaymath}
    j_h(p, \psi_h) \leq \jnorm{p} \jnorm{\psi_h} \leq Ch^{s+1} \| p
    \|_{H^s(\Omega)} \| \psi \|_{H^1(\Omega)}.
  \end{displaymath}
  Putting together all the above estimates yields the error estimate
  \eqref{eq_L2error}, which completes the proof.
\end{proof}
Combining   Theorem \ref{th_DGerror} and Theorem
\ref{th_L2error} leads to the desired error estimates in Theorem
\ref{th_errorestimate}.  Particularly, we point out that for the
divergence-free source term $\bm{j}$, the solution $p$ is just zero,
and this case is also often encountered in practice
\cite{Perugia2002stabilized}.  For such a problem, $p$ can be
approximated by piecewise constant spaces, i.e. 
we can use the pair of spaces $\Vh \times Q_h^0(r \geq 1)$ in the
numerical scheme. For the exact solution $(\bm{u}, p)$ with $\bm{u}
\in H^{t + 1}(\Omega)^d$ and $p = 0$, by estimates \eqref{eq_uulinf}
and \eqref{eq_pphinf}, the numerical solution $(\bm{u}_h, p_h)$ from
the discrete problem \eqref{eq_mixedform} has the following error
estimates:
\begin{equation}
  \begin{aligned}
    \Anorm{\bm{u} - \bm{u}_h} + \Cnorm{p - p_h} & \leq C_0  h^{s}
    \|\bm{u} \|_{H^{s+1}(\Omega)}, \\
    \| \bm{u} - \bm{u}_h \|_{L^2(\Omega)} & \leq C_1 h^{s+1} \|\bm{u}
    \|_{H^{s+1}(\Omega)}, \\
  \end{aligned}
  \label{eq_p0error}
\end{equation}
where $C_1$ depends on $\Cr$ and $s = \min(t, r)$, provided by $h \leq
h_1$.

% vim:spell:tw=70:fo+=Mn:cc=70
\section{Numerical Results}
\label{sec_numericalresults}
In this section, a number of numerical tests in two and three
dimensions are presented to show the numerical performance of
the unfitted method. For all tests, the source term $\bm{j}$ and the
boundary data $\bm{g}$ are taken accordingly from the exact solution
$\bm{u}$. We assume that the coefficients $\mu_r$ and $\coed$ correspond to
the case of the vacuum, i.e. $\mu_r = \coed = 1$,  and we also take $k =
1$. The curved boundary $\Gamma$ in each case is described by a level
set function $\phi$. 
%The discrete linear forms involve the integration
%on the curved domains $K^0$, $f^0$ and on the curve $\Gamma_K$ for any
%$K \in \MThG$. We refer to \cite{Cui2019quadratures, Saye2015high} for
%the subroutines that seek the quadrature points and weights on such
%domains. 

\subsection{2D Examples} 
The spaces $\Vh$ and $\Qh$ for the two-dimensional case are selected
to be the discontinuous piecewise polynomial spaces. 
The penalty parameter $\alpha$ is taken as
$3(m+1)^2 + 15$. We refer to \cite{Sarmany2010optimal,
Prenter2018note} for some discussions on the choice of the
parameter. 

\def\Vh{\bmr{V}_h^{m+1}}

\paragraph{\textbf{Example 1.}}
In the first example, we solve the Maxwell problem \eqref{eq_Maxwell}
defined in a circle centered at the origin with radius $r = 0.7$. 
The corresponding level set function $\phi(x, y)$ is 
\begin{displaymath}
  \phi(x, y) = x^2 + y^2 - r^2, \quad \forall(x, y) \in \mb{R}^2,
  \quad r = 0.7.
\end{displaymath}
The domain $\Omega = \{ (x, y) \in \mb{R}^2 \ | \ \phi(x, y) < 0 \}$
and the background mesh $\MTh^*$ is taken to cover the squared domain
$(-1, 1)^2$ (cf. Figure \ref{fig_ex1mesh}), with the mesh size $h =
1/3, 1/6, 1/12, 1/24$. The exact solution is chosen as
\begin{displaymath}
  \bm{u}(x, y) = \begin{bmatrix} 
    \cos(\pi x)\sin(\pi y) \\  -\sin(\pi x)\cos(\pi y)) \\
  \end{bmatrix},
  \quad p(x, y) = x^2 + y^2 - r^2.
\end{displaymath}
We solve this problem with the pair of spaces $\Vh
\times \Qh (0 \leq m \leq 2)$. The numerical errors are reported in
Table \ref{tab_example1}. It can be observed that for $\bm{u}$ the
errors under both the energy norm and the $L^2$ norm approach zero at
the optimal convergence speed $O(h^{m+1})$ and $O(h^{m+2})$,
respectively, and the error $\Cnorm{p - p_h}$ also converges to zero
with the optimal rate $O(h^{m+1})$. The numerical results are in
perfect agreement with the theoretical estimate \eqref{eq_mixederror}. 

\begin{figure}[htp]
  \centering
  \begin{minipage}[t]{0.36\textwidth}
    \centering
    \begin{tikzpicture}[scale=1.7]
      \centering
      \node at (0, 0) {$\Omega$};
      \draw[thick, red] (0, 0) circle [radius = 0.7];
      \draw[thick, black] (-1, -1) rectangle (1, 1);
    \end{tikzpicture}
  \end{minipage}
  \begin{minipage}[t]{0.36\textwidth}
    \centering
    \begin{tikzpicture}[scale=1.7]
      \centering
      \input{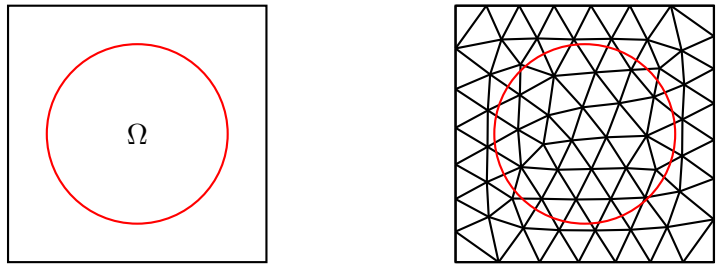}
      \draw[thick, red] (0, 0) circle [radius = 0.7];
      \draw[thick, black] (-1, -1) rectangle (1, 1);
    \end{tikzpicture}
  \end{minipage}
  \caption{The curved domain $\Omega$ and the partition $\MTh^*$ of
  Example 1 in two dimensions.}
  \label{fig_ex1mesh}
\end{figure}

\begin{table}
  \centering
  \renewcommand\arraystretch{1.3}
  \scalebox{0.8}{
  \begin{tabular}{p{0.3cm} | p{2.6cm} | p{1.6cm} | p{1.6cm} |
    p{1.6cm} | p{1.6cm} | p{1cm} }
    \hline\hline
    $m$ & $h$ & 1/3 & 1/6 & 1/12 & 1/24 & order \\
    \hline
    \multirow{3}{*}{$1$} & $\| \bm{u}-\bm{u}_h \|_{L^2(\Omega)}$
    & 1.528e-1 & 3.433e-2 & 7.307e-3 & 1.850e-3 & 1.99 \\
    \cline{2-7}
    & $ \Anorm{\bm{u}-\bm{u}_h} $
    & 1.476e-0 & 6.662e-1 & 3.158e-1 & 1.585e-1 & 0.99 \\
    \cline{2-7}
    & $ \Cnorm{p-p_h} $ & 1.553e-1 & 6.901e-2 & 2.741e-2 & 1.280e-2 &
    1.10 \\
    \hline
    \multirow{3}{*}{$2$} & $\| \bm{u}-\bm{u}_h \|_{L^2(\Omega)}$
    & 3.402e-2 & 1.607e-3 & 1.871e-4 & 1.946e-5 & 3.26 \\
    \cline{2-7}
    & $ \Anorm{\bm{u}-\bm{u}_h} $
    & 4.067e-1 & 6.509e-1 & 1.491e-2 & 3.293e-3 & 2.18 \\
    \cline{2-7}
    & $ \Cnorm{p-p_h} $ & 3.033e-2 & 5.113e-3 & 1.080e-3 & 2.103e-4 &
    2.36 \\
    \hline
    \multirow{3}{*}{$3$} & $\| \bm{u}-\bm{u}_h \|_{L^2(\Omega)}$
    & 3.553e-3 & 1.436e-4 & 6.665e-6 & 3.489e-7 & 4.26 \\
    \cline{2-7}
    & $ \Anorm{\bm{u}-\bm{u}_h} $
    & 7.159e-2 & 6.542e-3 & 6.096e-4 & 5.481e-5 & 3.48 \\
    \cline{2-7}
    & $ \Cnorm{p-p_h} $ & 2.061e-3 & 2.592e-4 & 2.595e-5 & 2.918e-6 &
    3.15 \\
    \hline\hline
  \end{tabular}
  }
  \caption{The numerical errors of Example 1 in 2D.}
  \label{tab_example1}
\end{table}

\paragraph{\textbf{Example 2.}}
In this example, we consider the problem in a star-shaped domain
\cite{Massing2019stabilized} (see Figure \ref{fig_example2}), where
the boundary $\Gamma$ is governed by the following level set
function in the polar coordinate $(r, \theta)$:
\begin{displaymath}
  \phi(r, \theta) = r - \frac{1}{2} - \frac{\sin(5 \theta)}{7}.
\end{displaymath}
The problem is solved with a family of triangular meshes on the domain
$\Omega^* = (-1, 1)^2$ with the mesh size $h = 1/6$, $1/12$.  $1/24$.
$1/48$. The analytical solution is given by
\begin{displaymath}
  \bm{u}(x, y) = \begin{bmatrix}
    -e^{x}(y \cos(y) + \sin(y)) \\
    e^x y \sin(y) \\
  \end{bmatrix}, \quad p = 0.
\end{displaymath}
Since $p = 0$ in this case, the problem is approximated by the spaces
$\Vh \times Q_h^0$ with the accuracy $0 \leq m \leq 2$, as suggested
in Section \ref{sec_erroraux}.  The convergence history is shown in
Table \ref{tab_example2}. The errors $\Anorm{\bm{u} - \bm{u}_h}$ and
$\|\bm{u} - \bm{u}_h \|_{L^2(\Omega)}$ decrease to zero at the optimal
rates $O(h^{m+1})$ and $O(h^{m+2})$, respectively, which are well
consistent with the estimate \eqref{eq_p0error}. Although $Q_h^0$
is the piecewise constant space, we still observe that the error
$\Cnorm{p - p_h}$ tends to zero at the speed $O(h^{m+1})$, which
validates the estimate \eqref{eq_p0error}.

\begin{figure}[htp]
  \centering
  \begin{minipage}[t]{0.35\textwidth}
    \centering
    \begin{tikzpicture}[scale=1.7]
      \centering
      \node at (0, 0) {$\Omega$};
      \draw[thick, black] (-1, -1) rectangle (1, 1);
      \draw[thick, domain=0:360, red, samples=120] plot (\x:{(0.5 -
      sin(\x*5)/7)*1.02});
    \end{tikzpicture}
  \end{minipage}
  \begin{minipage}[t]{0.35\textwidth}
    \centering
    \begin{tikzpicture}[scale=1.7]
      \centering
      \input{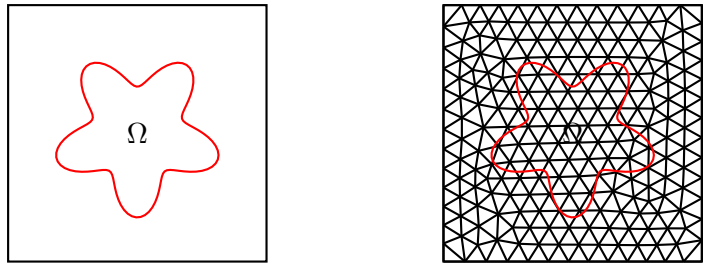}
      \node at (0, 0) {$\Omega$};
      \draw[thick, black] (-1, -1) rectangle (1, 1);
      \draw[thick, domain=0:360, red, samples=120] plot (\x:{(0.5 -
      sin(\x*5)/7)*1.02});
    \end{tikzpicture}
  \end{minipage}
  \caption{The curved domain $\Omega$ and the partition $\MTh^*$ of
  Example 2 in 2D.}
  \label{fig_example2}
\end{figure}

\begin{table}
  \centering
  \renewcommand\arraystretch{1.3}
  \scalebox{0.8}{
  \begin{tabular}{p{0.3cm} | p{2.6cm} | p{1.6cm} | p{1.6cm} |
    p{1.6cm} | p{1.6cm} | p{1cm} }
    \hline\hline
    $m$ & $h$ & 1/3 & 1/6 & 1/12 & 1/24 & order \\
    \hline
    \multirow{3}{*}{$1$} & $\| \bm{u}-\bm{u}_h \|_{L^2(\Omega)}$
    & 1.069e-2 & 1.742e-3 & 4.696e-4 & 1.098e-4 & 2.09 \\
    \cline{2-7}
    & $ \Anorm{\bm{u}-\bm{u}_h} $
    & 1.658e-1 & 7.645e-2 & 3.667e-2 & 1.818e-2 & 1.02 \\
    \cline{2-7}
    & $ \Cnorm{p-p_h} $ & 1.665e-2 & 5.650e-3 & 2.800e-3 & 1.185e-3 &
    1.23 \\
    \hline
    \multirow{3}{*}{$2$} & $\| \bm{u}-\bm{u}_h \|_{L^2(\Omega)}$
    & 1.495e-4 & 1.538e-5 & 1.328e-6 & 1.480e-7 & 3.16 \\
    \cline{2-7}
    & $ \Anorm{\bm{u}-\bm{u}_h} $
    & 4.942e-3 & 1.068e-3 & 2.412e-4 & 5.783e-5 & 2.06 \\
    \cline{2-7}
    & $ \Cnorm{p-p_h} $ & 2.308e-4 & 2.050e-5 & 2.248e-6 & 3.476e-7 &
    2.68 \\
    \hline
    \multirow{3}{*}{$3$} & $\| \bm{u}-\bm{u}_h \|_{L^2(\Omega)}$
    & 4.120e-6 & 1.373e-7 & 4.388e-9 & 2.344e-10 & 4.22 \\
    \cline{2-7}
    & $ \Anorm{\bm{u}-\bm{u}_h} $
    & 1.388e-4 & 1.075e-5 & 1.073e-6 & 1.233e-7 & 3.12 \\
    \cline{2-7}
    & $ \Cnorm{p-p_h} $ & 1.763e-6 & 1.603e-7 & 4.871e-9 & 5.192e-10 &
    3.23 \\
    \hline\hline
  \end{tabular}
  }
  \caption{The numerical errors of Example 2 in 2D.}
  \label{tab_example2}
\end{table}

\subsection{3D Examples}
The spaces $\Vh$ and $\Qh$ in three dimensions are selected to be 
$C^0$ finite element spaces.
The penalty parameter $\alpha$ is taken as $3m^2 + 25$.

\paragraph{\textbf{Example  3.}}
We first test a three-dimensional example by solving the problem
defined in a sphere, which is centered at $(0.5, 0.5, 0.5)$ with  
radius $r = 0.35$, i.e. the corresponding level set function is 
\begin{displaymath}
  \phi(x, y, z) = (x - 0.5)^2 + (y-0.5)^2 + (z - 0.5)^2 - r^2, \quad r
  = 0.35.
\end{displaymath}
We employ a series of tetrahedral meshes on the cubic domain $(0,
1)^3$ to solve this problem; see Figure \ref{fig_3dex1domain}.  The
exact solution takes the form 
\begin{displaymath}
  \bm{u} = \begin{bmatrix}
    \sin(\pi y) \sin(\pi z) \\
    \sin(\pi x) \sin(\pi z) \\
    \sin(\pi x) \sin(\pi y) \\
  \end{bmatrix}, \quad p = (x-0.5)^2 + (y-0.5)^2 + (z-0.5)^2 - 0.35^2.
\end{displaymath}
The numerical results are gathered in Table \ref{tab_example3d1}. One
can observe that the numerical solution of the unfitted method still
has the optimal rates under all error measurements in three
dimensions, which are clearly in accordance with the theoretical
estimates. 

\begin{figure}[htb]
  \centering
  \includegraphics[width=0.22\textwidth, height=0.22\textwidth]{./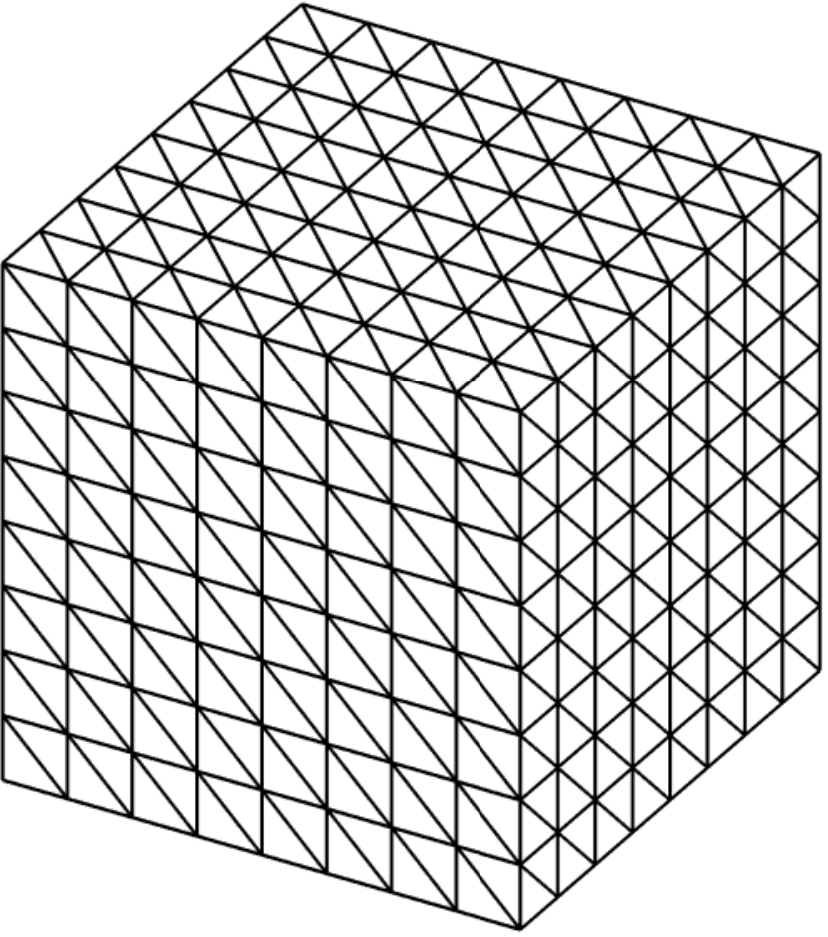}
  \hspace{50pt}
  \includegraphics[width=0.22\textwidth, height=0.22\textwidth]{./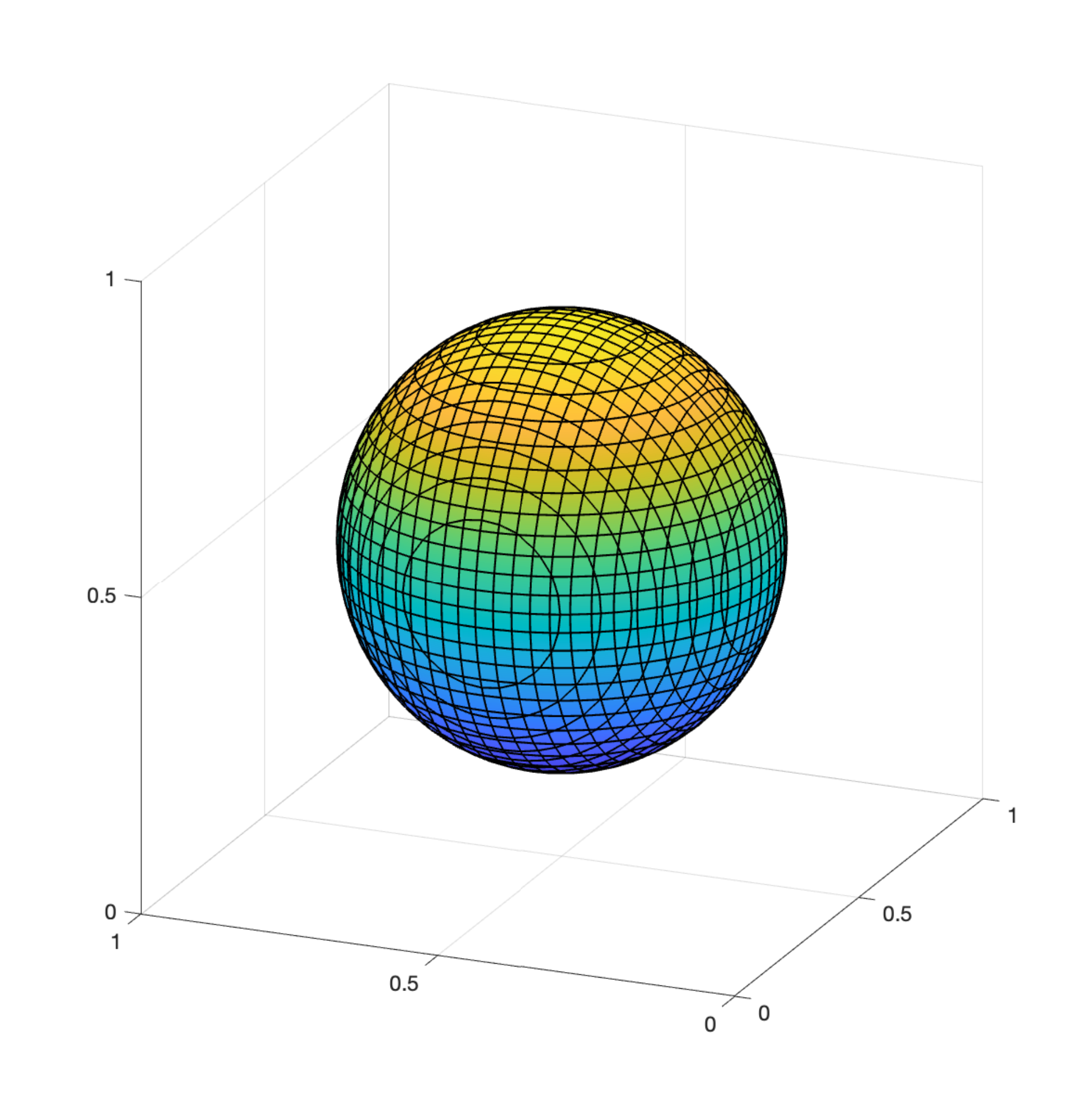}
  \caption{The spherical domain $\Omega$ and the partition $\MTh^*$ of
  Example 3 in 3D.}
  \label{fig_3dex1domain}
\end{figure}

\begin{table}
  \centering
  \renewcommand\arraystretch{1.3}
  \scalebox{0.8}{
  \begin{tabular}{p{0.3cm} | p{2.6cm} | p{1.6cm} | p{1.6cm} |
    p{1.6cm} | p{1.6cm} | p{1cm} }
    \hline\hline
    $m$ & $h$ & 1/4 & 1/8 & 1/16 & 1/32 & order \\
    \hline
    \multirow{3}{*}{$1$} & $\| \bm{u}-\bm{u}_h \|_{L^2(\Omega)}$
    & 6.169e-2 & 1.328e-2 & 2.344e-3 & 5.022e-4 & 2.22 \\
    \cline{2-7}
    & $ \Anorm{\bm{u}-\bm{u}_h} $
    & 6.916e-1 & 2.657e-1 & 1.235e-1 & 5.992e-2 & 1.03 \\
    \cline{2-7}
    & $ \Cnorm{p-p_h} $ & 2.046e-2 & 1.255e-2 & 6.058e-3 & 2.320e-3 &
    1.38 \\
    \hline
    \multirow{3}{*}{$2$} & $\| \bm{u}-\bm{u}_h \|_{L^2(\Omega)}$
    & 2.102e-3 & 4.746e-4 & 4.042e-5 & 3.958e-6 & 3.35 \\
    \cline{2-7}
    & $ \Anorm{\bm{u}-\bm{u}_h} $
    & 4.413e-2 & 1.928e-2 & 3.830e-3 & 8.768e-4 & 2.13 \\
    \cline{2-7}
    & $ \Cnorm{p-p_h} $ & 9.755e-3 & 2.650e-3 & 2.739e-4 & 6.039e-5 &
    2.18 \\
    \hline\hline
  \end{tabular}
  }
  \caption{The numerical errors of Example 3 in 3D.}
  \label{tab_example3d1}
\end{table}

\paragraph{\textbf{Example 4.}}
In the last example, we consider the case that the boundary of the
domain $\Gamma$ is a smooth molecular surface of two atoms (see Figure
\ref{fig_3dex2domain}), whose level set function reads
\cite{Wei2018spatially, Li2018interface}:
\begin{displaymath}
  \phi(x, y, z) = \left( (2.5(x - 0.5))^2 + (4(y-0.5))^2 + (2.5(z -
  0.5))^2 + 0.6 \right)^2 - 3.5(4(y-0.5))^2 - 0.6.
\end{displaymath}
We solve the problem \eqref{eq_Maxwell} with the exact solution 
\begin{displaymath}
  \bm{u} = \begin{bmatrix}
    \cos(x) \sin(y) e^{2z} \\
    \sin(x) \cos(y) e^{2z} \\
    \sin(x) \sin(y) e^{2z} \\
  \end{bmatrix}, \quad p = 0.
\end{displaymath}
For this case, we also adopt the approximation spaces $\Vh \times
Q_h^0(m = 0, 1)$ in the numerical scheme, and use a sequence of
tetrahedral meshes with $h = 1/4, 1/8, 1/16, 1/32$. The
results are collected in Table \ref{tab_example3d2}. The numerically
detected convergence rates again confirm the theoretical predictions.

\begin{figure}[htb]
  \centering
  \includegraphics[width=0.25\textwidth]{./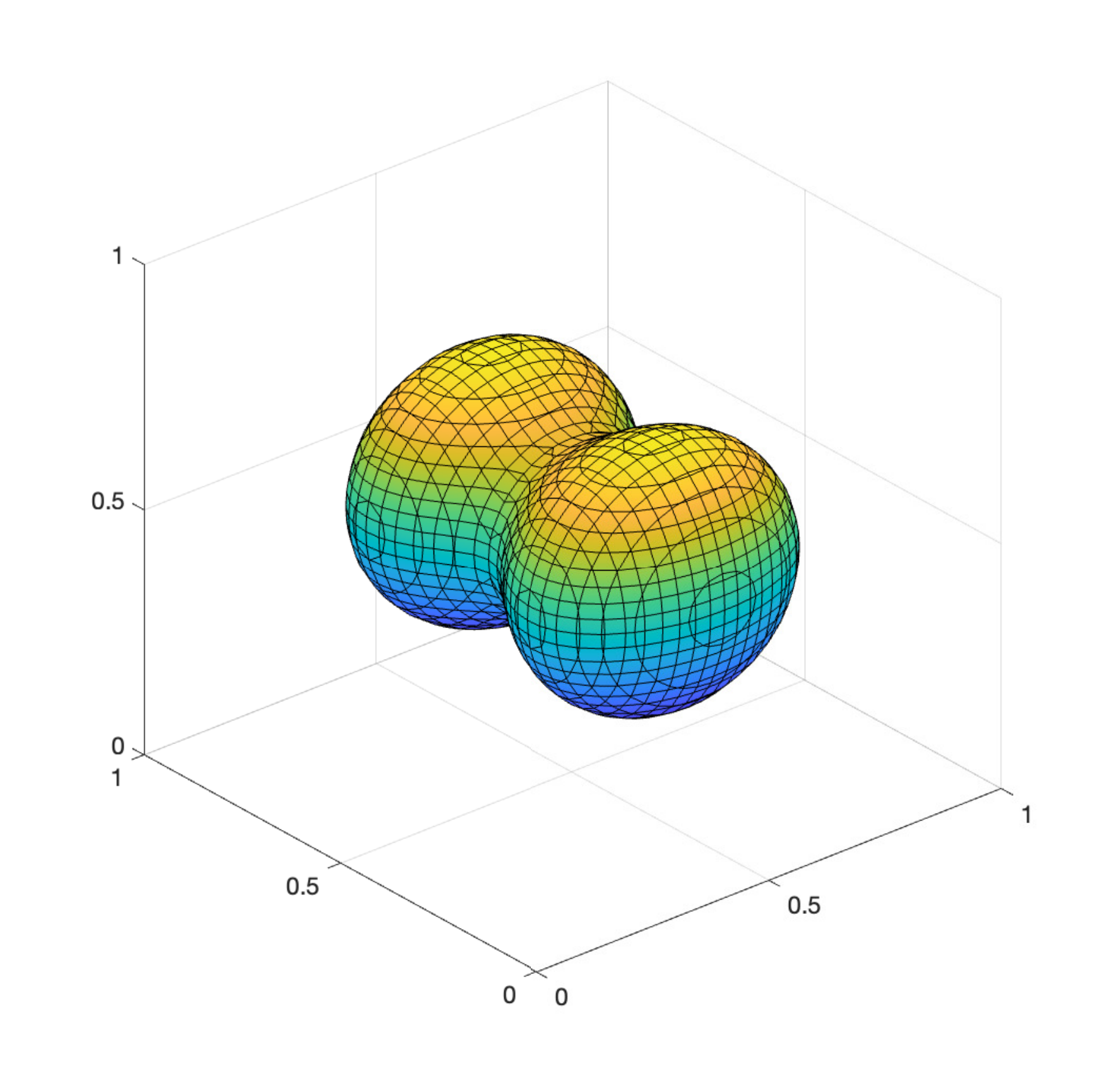}
  \hspace{50pt}
  \includegraphics[width=0.23\textwidth]{./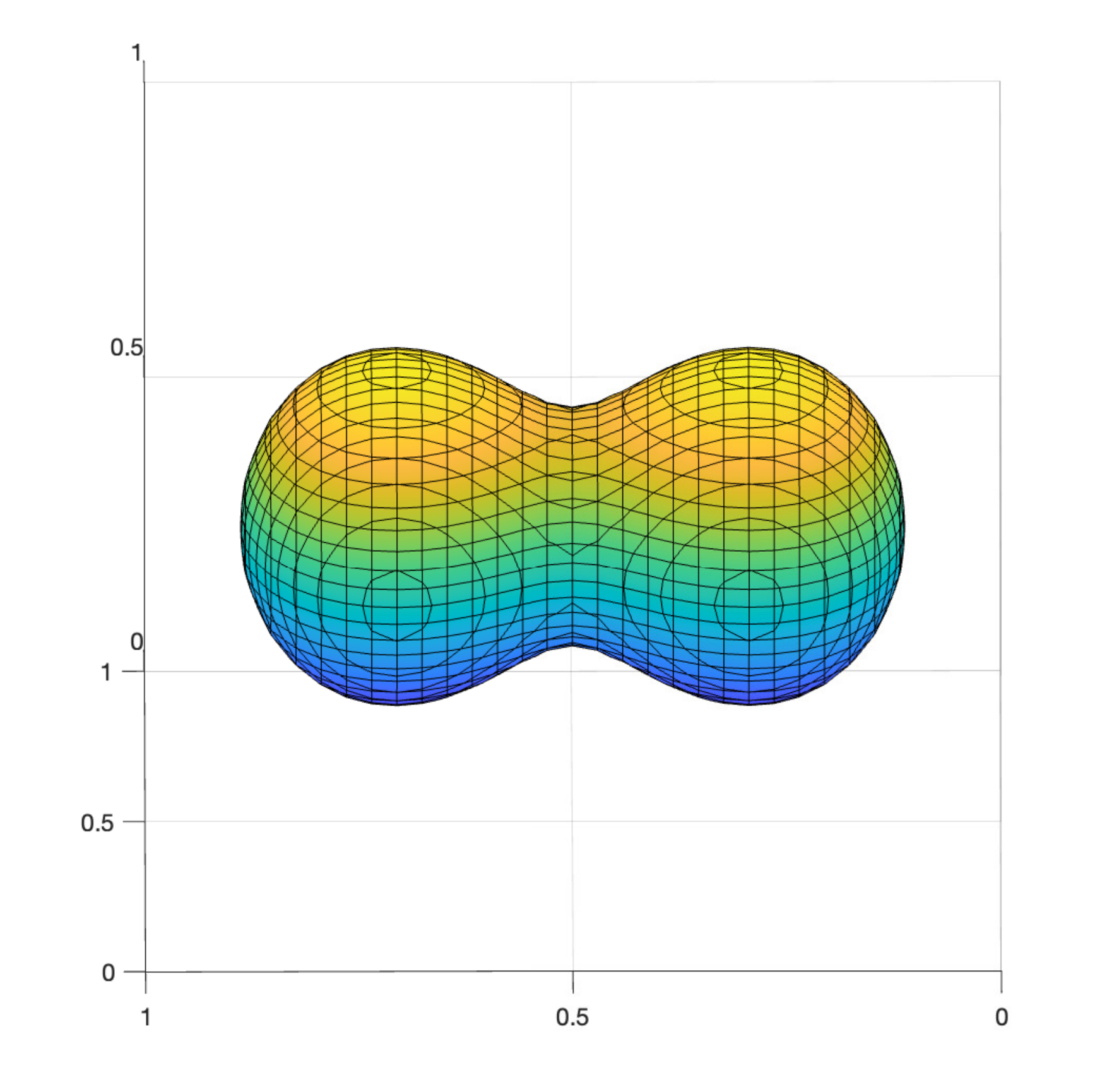}
  \caption{The curved domain $\Omega$ of Example 4 in 3D.}
  \label{fig_3dex2domain}
\end{figure}

\begin{table}
  \centering
  \renewcommand\arraystretch{1.3}
  \scalebox{0.8}{
  \begin{tabular}{p{0.3cm} | p{2.6cm} | p{1.6cm} | p{1.6cm} |
    p{1.6cm} | p{1.6cm} | p{1cm} }
    \hline\hline
    $m$ & $h$ & 1/4 & 1/8 & 1/16 & 1/32 & order \\
    \hline
    \multirow{3}{*}{$1$} & $\| \bm{u}-\bm{u}_h \|_{L^2(\Omega)}$
    & 1.701e-2 & 5.122e-3 & 1.353e-3 & 3.317e-4 & 2.03 \\
    \cline{2-7}
    & $ \Anorm{\bm{u}-\bm{u}_h} $
    & 4.298e-1 & 2.355e-1 & 1.212e-1 & 6.121e-2 & 0.99 \\
    \cline{2-7}
    & $ \Cnorm{p-p_h} $ & 2.221e-2 & 2.055e-2 & 1.053e-2 & 4.578e-3 &
    1.20 \\
    \hline
    \multirow{3}{*}{$2$} & $\| \bm{u}-\bm{u}_h \|_{L^2(\Omega)}$
    & 3.682e-3 & 1.262e-4 & 1.576e-5 & 1.890e-6 & 3.06 \\
    \cline{2-7}
    & $ \Anorm{\bm{u}-\bm{u}_h} $
    & 9.188e-2 & 8.671e-2 & 2.228e-3 & 5.383e-4 & 2.05 \\
    \cline{2-7}
    & $ \Cnorm{p-p_h} $ & 1.308e-2 & 3.703e-4 & 1.139e-4 & 2.966e-5 &
    1.93 \\
    \hline\hline
  \end{tabular}
  }
  \caption{The numerical errors of Example 4 in 3D.}
  \label{tab_example3d2}
\end{table}

% vim:spell:tw=70:fo+=Mn:cc=70
\section{Conclusion}
\label{sec_conclusion}
In this paper, we have developed an unfitted finite element method for
the time-harmonic Maxwell equations on a smooth domain, based on a
local extension operator and a ghost penalty technique.  The unfitted
mixed  interior penalty scheme allows the curved  boundary  to
intersect the background mesh arbitrarily, and is of optimal
convergence rates under both the energy norm and the $L^2$ norm for
all variables.  A number of numerical results  have confirmed the
theoretical predictions.

% vim:spell:tw=70:fo+=Mn:cc=70

\newcommand\Henorm[1]{|#1|_{e}}
\newcommand\Hcnorm[1]{|#1|_{\circ}}

\def\Vo{\bmr{V}_{h}^{1}}

\begin{appendix}
  \section{}
  \label{sec_app_proof}
  The proof of Lemma \ref{le_infsupTH} follows from the ideas in
  \cite{Guzman2018infsup}.  
  Define the scaled seminorms
  \begin{displaymath}
    \Hcnorm{q_h}^2 := \sum_{K \in \MThc} h_K^2 \|\nabla q_h
    \|_{L^2(K)}^2, \quad \forall q_h \in \Qhcc
  \end{displaymath}
  \begin{displaymath}
    \Henorm{q_h}^2 := \sum_{K \in \MTh} h_K^2 \|\nabla q_h
    \|_{L^2(K)}^2 + \sum_{f \in \MFhI} h_f \| \jump{q_h}
    \|_{L^2(f)}^2, \quad \forall q_h \in \Qh.
  \end{displaymath}
  Let us first recall some existing results   \cite[Lemma 3 and Lemma
  4]{Guzman2018infsup}:
  \begin{lemma}
    For any $q_h \in \Qhcc$, there exists $\wt{q}_h \in \Qh$ such
    that $\wt{q}_h|_{\Ohc} = q_h$ and 
    \begin{equation}
      \Henorm{\wt{q}_h} \leq C \Hcnorm{q_h}, \quad \|\wt{q}_h
      \|_{L^2(\Oho)} \leq C \| q_h \|_{L^2(\Ohc)}.
    \label{eq_ap_Ehq}
    \end{equation}
    \label{le_ap_Ehq}
  \end{lemma}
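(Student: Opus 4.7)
The plan is to construct $\wt{q}_h$ by polynomial extrapolation from interior neighbors. For every cut element $K \in \MThG$ we exploit Assumption \ref{as_mesh2} to pick the assigned interior neighbor $K^\circ \in \Delta(K) \cap \MThc$. Since $q_h|_{K^\circ} \in \mb{P}_m(K^\circ)$, it extends canonically to a polynomial on all of $\mb{R}^d$; call this extension $\wh{q}_h^K$. Set
\begin{equation*}
  \wt{q}_h|_K := q_h|_K \text{ for } K \in \MThc, \qquad \wt{q}_h|_K := \wh{q}_h^K|_K \text{ for } K \in \MThG.
\end{equation*}
By construction $\wt{q}_h|_{\Ohc} = q_h$ and $\wt{q}_h \in \Qh$ in the discontinuous case. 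For the continuous case $\Qh = \QhC$, one adapts the construction by defining $\wt{q}_h$ via nodal values on a layer-by-layer walk through $\MThG$, using the assigned $K^\circ$ to prescribe the nodal values of $\wt{q}_h$ at the vertices of $K$ that lie outside $\overline{\Ohc}$ while preserving the interior nodal values.

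The $L^2$ bound is a direct consequence of Lemma \ref{le_EkL2p} applied to $\wh{q}_h^K = E_{K^\circ} q_h$ (the extension $E_{K^\circ}$ is exactly polynomial extrapolation when the argument is already a polynomial). Quasi-uniformity gives $K \subset B(\bm{x}_{K^\circ}, C_\Delta h_{K^\circ})$ and $h_K \sim h_{K^\circ}$, so
\begin{equation*}
  \| \wt{q}_h \|_{L^2(K)} \leq \| E_{K^\circ} q_h \|_{L^2(B(\bm{x}_{K^\circ}, C_\Delta h_{K^\circ}))} \leq C \| q_h \|_{L^2(K^\circ)}.
\end{equation*}
The map $K \mapsto K^\circ$ has bounded multiplicity (again by quasi-uniformity), so summation yields the second inequality in \eqref{eq_ap_Ehq}. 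For the elementwise part of $\Henorm{\wt{q}_h}$ on $K \in \MThG$, the inverse estimate together with the same polynomial-equivalence argument give $h_K^2 \| \nabla \wt{q}_h \|_{L^2(K)}^2 \leq C \| q_h \|_{L^2(K^\circ)}^2 \leq C h_K^{-2} \| q_h \|_{L^2(K^\circ)}^2 \cdot h_K^2$, and I can further bound this by $C \Hcnorm{q_h}^2$ after using a discrete Poincar\'e-type argument on each interior element to absorb the $L^2$ mass into $h_K^2 \|\nabla q_h\|_{L^2(K^\circ)}^2$ plus lower-order constants that cancel by choice.

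The main obstacle is controlling the jump contribution $\sum_{f \in \MFhI} h_f \| \jump{\wt{q}_h} \|_{L^2(f)}^2$ by $\Hcnorm{q_h}$. Jumps vanish on faces interior to $\Ohc$ by the continuity of $q_h$, so only faces $f \in \MFhG$ (and the interface between $\Ohc$ and $\MThG$) contribute. For such a face $f$ shared by two cut elements $K_1, K_2$ with assigned interior neighbors $K_1^\circ, K_2^\circ$, I would write
\begin{equation*}
  \jump{\wt{q}_h}|_f = (\wh{q}_h^{K_1} - \wh{q}_h^{K_2})|_f \, \un,
\end{equation*}
and estimate $\|\wh{q}_h^{K_1} - \wh{q}_h^{K_2}\|_{L^2(f)}$ by passing through a short chain of interior elements connecting $K_1^\circ$ to $K_2^\circ$ (both lie in the bounded cluster $\Delta(K_1)\cup\Delta(K_2)$), applying the trace/inverse inequality on each link and using the continuity of $q_h$ across the shared interior faces to telescope the difference into a sum of $h_{K'}\|\nabla q_h\|_{L^2(K')}$ over those interior elements $K'$. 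A final $h_f \sim h_K$ scaling then matches the definition of $\Hcnorm{\cdot}$, completing the proof.
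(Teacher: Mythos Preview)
The paper does not prove this lemma; it simply cites it as Lemma~3 of Guzm\'an--Olshanskii (2018). So there is no in-paper proof to compare against, and your construction via $E_{K^\circ}$ is in fact the natural one and matches what the cited reference does.

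However, your argument for the gradient part of $\Henorm{\wt{q}_h}$ is wrong. You write $h_K^2\|\nabla\wt{q}_h\|_{L^2(K)}^2\le C\|q_h\|_{L^2(K^\circ)}^2$ and then claim a ``discrete Poincar\'e-type argument'' turns $\|q_h\|_{L^2(K^\circ)}^2$ into $h_K^2\|\nabla q_h\|_{L^2(K^\circ)}^2$ ``plus lower-order constants that cancel by choice.'' This cannot work: take $q_h\equiv 1$ on $\Ohc$. Then $\Hcnorm{q_h}=0$ but $\|q_h\|_{L^2(K^\circ)}>0$, so no such Poincar\'e step exists. The detour through $\|q_h\|_{L^2(K^\circ)}$ is unnecessary anyway: Lemma~\ref{le_EkL2p} already gives you the gradient estimate directly,
\[
\|\nabla\wt{q}_h\|_{L^2(K)}=\|\nabla E_{K^\circ}q_h\|_{L^2(K)}\le\|\nabla E_{K^\circ}q_h\|_{L^2(B(\bm{x}_{K^\circ},C_\Delta h_{K^\circ}))}\le C\|\nabla q_h\|_{L^2(K^\circ)},
\]
so $h_K^2\|\nabla\wt{q}_h\|_{L^2(K)}^2\le C h_{K^\circ}^2\|\nabla q_h\|_{L^2(K^\circ)}^2$ and summation with bounded overlap finishes that part cleanly.

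Your jump argument is the right idea but under-specified. For a face $f$ between two cut elements $K_1,K_2$ you need that $K_1^\circ$ and $K_2^\circ$ can be joined by a face-connected chain of \emph{interior} elements of uniformly bounded length; otherwise the telescoping sum cannot be controlled by a local gradient contribution. This is not automatic from Assumption~\ref{as_mesh2} alone (which only guarantees each cut element has \emph{some} interior neighbor), and is exactly the kind of geometric fact that Guzm\'an--Olshanskii isolate and prove separately for a fine enough mesh on a smooth boundary. You should either invoke that result or state and verify the bounded-chain property explicitly. Also note the mixed case (face between $K_1\in\MThc$ and $K_2\in\MThG$ with $K_2^\circ\ne K_1$) needs the same treatment; you did not mention it.
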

  \begin{lemma}
    For any $\bm{v}_h \in \Vo \cap H^1(\Omega_h)^d$, there
    exists a unique decomposition $\bm{v}_h =\bm{v}_{h, 1} +
    \bm{v}_{h, 2}$ such that $\bm{v}_{h, 1} \in
    H_0^1(\Ohc)^d$, $\supp(\bm{v}_{h, 1}) = \Ohc$, $
    \bm{v}_{h, 2} \in  \Vo \cap H^1(\Oh)^d$, and
    \begin{displaymath}
      \sum_{K \in \MTh} h_K^{-2} \|\bm{v}_{h, 2} \|_{L^2(K)}^2 \leq
      \sum_{K \in \MThG} h_K^{-2} \|\bm{v}_h \|_{L^2(K)}^2.
    \end{displaymath}
    \label{le_ap_vhdecompose}
  \end{lemma}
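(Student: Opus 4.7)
The plan is to construct the decomposition explicitly from the nodal basis of $\Vo$: put the values of $\bm{v}_h$ at all mesh vertices ``touched'' by the cut strip $\MThG$ into $\bm{v}_{h,2}$, and leave the remaining interior degrees of freedom in $\bm{v}_{h,1}$. This is the natural way to peel off a boundary layer whose size is controlled by $\bm{v}_h$ only on the cut elements, and it manifestly gives a $\bm{v}_{h,1}$ that is zero on $\partial\Ohc$.

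Concretely, let $\mathcal{N}$ denote the set of vertices of $\MTh$, and split $\mathcal{N}=\mathcal{N}_\circ \sqcup \mathcal{N}_\Gamma$ where $\mathcal{N}_\Gamma$ collects the vertices that belong to at least one $K\in\MThG$ (equivalently, the vertices on $\partial\Ohc$ together with those outside $\Ohc$), and $\mathcal{N}_\circ := \mathcal{N}\setminus\mathcal{N}_\Gamma$ collects the vertices strictly interior to $\Ohc$. I would then define $\bm{v}_{h,2}\in\Vo$ by nodal prescription
\[
\bm{v}_{h,2}(\bm{x}_i) = \bm{v}_h(\bm{x}_i)\ \text{for}\ \bm{x}_i\in\mathcal{N}_\Gamma,\qquad \bm{v}_{h,2}(\bm{x}_i) = \bm{0}\ \text{for}\ \bm{x}_i\in\mathcal{N}_\circ,
\]
and set $\bm{v}_{h,1} := \bm{v}_h-\bm{v}_{h,2}$. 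Since $\bm{v}_{h,1}$ is piecewise linear and vanishes at every vertex on $\partial\Ohc$ and every vertex outside $\overline{\Ohc}$, it lies in $H_0^1(\Ohc)^d$ with $\supp(\bm{v}_{h,1})=\Ohc$. Uniqueness (among decompositions in which $\bm{v}_{h,2}$ vanishes at vertices strictly interior to $\Ohc$, which is the natural convention imposed by the support requirement on $\bm{v}_{h,1}$) follows immediately from nodal determinism of $\Vo$.

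For the stability bound I would combine two standard equivalences on the piecewise linear space: the nodal inverse inequality $\|\bm{v}_{h,2}\|_{L^2(K)}^2 \leq C h_K^d \sum_{\bm{x}_i \in K}|\bm{v}_{h,2}(\bm{x}_i)|^2$ on each $K\in\MTh$, and for each $\bm{x}_i\in\mathcal{N}_\Gamma$ the norm equivalence $|\bm{v}_h(\bm{x}_i)|^2 \leq C h_{K_i}^{-d}\|\bm{v}_h\|_{L^2(K_i)}^2$ obtained by picking any $K_i\in\MThG$ having $\bm{x}_i$ as a vertex. Multiplying by $h_K^{-2}$, summing over $K\in\MTh$, and using quasi-uniformity together with the bounded overlap of vertex stars then yields
\[
\sum_{K\in\MTh}h_K^{-2}\|\bm{v}_{h,2}\|_{L^2(K)}^2 \leq C\sum_{K\in\MThG}h_K^{-2}\|\bm{v}_h\|_{L^2(K)}^2,
\]
with all constants folded into the single $C$ hidden in the statement.

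The only genuine technical point is the choice of the representative element $K_i$ for each boundary-layer vertex $\bm{x}_i\in\mathcal{N}_\Gamma$: such a vertex may belong to several cut elements, and one must ensure the resulting double-counting on the right-hand side is uniformly bounded. This follows from shape regularity and quasi-uniformity of $\MTh$, so the argument is routine once the correct vertex splitting $\mathcal{N}=\mathcal{N}_\circ\sqcup\mathcal{N}_\Gamma$ and the natural convention for $\bm{v}_{h,2}$ at interior vertices are in place. The rest of the proof is a straightforward elementwise computation, and no subtle analytic ingredient beyond standard finite element norm equivalences is required.
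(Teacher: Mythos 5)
The paper never proves this lemma: it is quoted (as ``Lemma 3 and Lemma 4'' of the cited reference of Guzm\'an and Olshanskii) and used as a black box in the appendix, so there is no in-paper proof to compare against. Your construction --- splitting the nodal values of $\bm{v}_h$ according to whether the vertex belongs to some cut element, i.e. taking $\bm{v}_{h,2}$ to be the part of $\bm{v}_h$ carried by the basis functions of the vertices in $\mathcal{N}_\Gamma$ and $\bm{v}_{h,1}:=\bm{v}_h-\bm{v}_{h,2}$ --- is exactly the standard (and the cited) construction, and your verification is sound: $\bm{v}_{h,1}$ vanishes at every vertex of every $K\in\MThG$, hence vanishes identically on the cut layer, is supported in $\overline{\Ohc}$ with zero trace on $\partial\Ohc$, and the scaled $L^2$ bound follows from the $\mb{P}_1$ nodal norm equivalences together with quasi-uniformity and the bounded overlap of vertex stars, as you indicate.

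Two remarks, both concerning the statement rather than your argument. First, the constant: your proof yields the bound only up to a multiplicative constant, while the displayed inequality in the lemma has none; this is not something you could repair, because no admissible decomposition satisfies the constant-free bound. Indeed, any $\bm{v}_{h,1}$ supported in $\overline{\Ohc}$ vanishes a.e.\ on each cut element, so necessarily $\bm{v}_{h,2}=\bm{v}_h$ on all of $\MThG$, and $\bm{v}_{h,2}$ cannot vanish identically on $\Ohc$ unless $\bm{v}_h$ happens to vanish on $\partial\Ohc$; hence the left-hand side generically exceeds the right-hand side, and the correct statement (as in the cited source, and as every later use in the appendix requires) must carry a constant $C$. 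Your phrase about the constant being ``hidden in the statement'' is the right reading, but it is worth saying explicitly that the verbatim, constant-free form is unprovable. Second, uniqueness: as literally stated, the membership and support conditions do not force uniqueness --- one can move any basis function attached to a node strictly interior to $\Ohc$ from $\bm{v}_{h,1}$ to $\bm{v}_{h,2}$ without violating them --- so uniqueness holds only in the sense you adopt, namely for the direct-sum splitting of $\Vo\cap H^1(\Oh)^d$ into the spans of interior-node and cut-layer-node basis functions. With those two readings, your proposal establishes the intended result by essentially the same route as the reference the paper relies on.
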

  We state the following inf-sup stability property:
  \begin{lemma}
    %For $m \geq 1$, there exists a constant $C$ such that 
    For $m \geq 1$, there holds
    \begin{equation}
      \sup_{\bm{v}_h \in \Vhcc \cap H_0^1(\Ohc)^d}
      \frac{\int_{\Ohc}\nabla \cdot (\coed \bm{v}_h) q_h
      \d{x}}{\|\bm{v}_h \|_{H^1(\Ohc)}} \geq C\Hcnorm{q_h}, \quad
      \forall q_h \in \Qhcc.
      \label{eq_ap_infsup}
    \end{equation}
    \label{le_ap_infsup}
  \end{lemma}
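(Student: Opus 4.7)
The plan is to integrate by parts and then build a local test function element by element. For any $\bm{v}_h \in \Vhcc \cap H_0^1(\Ohc)^d$ and $q_h \in \Qhcc \subset H^1(\Ohc)$, no face or boundary terms appear, so
\begin{displaymath}
  \int_{\Ohc} \nabla \cdot (\coed \bm{v}_h)\, q_h \d{x}
  = -\int_{\Ohc} \coed\, \bm{v}_h \cdot \nabla q_h \d{x}.
\end{displaymath}
The right-hand side pairs $\bm{v}_h$ against the very quantity controlled by $\Hcnorm{q_h}$, so the task reduces to producing a $\bm{v}_h$ whose pairing with $\coed\nabla q_h$ returns, up to constants, $\sum_K h_K^2 \|\nabla q_h\|_{L^2(K)}^2$, while still being bounded in $H^1(\Ohc)$ by $\Hcnorm{q_h}$.

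The construction is done element by element, following the idea of \cite{Guzman2018infsup}. Fix $K \in \MThc$; since $m \geq 1$ and $q_h \in \mb{P}_m$ on $K$, we have $\nabla q_h|_K \in \mb{P}_{m-1}(K)^d$. Quasi-uniformity gives $h_K \sim h$. Using an interior Lagrange node of $K$ (available because $m \geq 1$), I would construct a ``bump'' $\phi_K \in \Vhcc$ that is supported in the patch $\Delta(K) \cap \MThc$, vanishes on $\partial\Ohc$, and satisfies $\int_K \phi_K \d{x} \gtrsim h_K^d$. Then set
\begin{displaymath}
  \bm{v}_h^K := -h_K^2\, \phi_K\, \coed(\bm{x}_K)^{-1}\, \nabla q_h|_K,
\end{displaymath}
which lies in $\Vhcc \cap H_0^1(\Ohc)^d$. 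A scaling argument to the reference element, together with the equivalence of norms on $\mb{P}_m$ and the bounds \eqref{eq_mue} on $\coed$, gives on the one hand
\begin{displaymath}
  -\int_K \coed\, \bm{v}_h^K \cdot \nabla q_h \d{x} \geq c\, h_K^2\, \|\nabla q_h\|_{L^2(K)}^2,
\end{displaymath}
and on the other hand $\|\bm{v}_h^K\|_{H^1(\Ohc)}^2 \leq C\, h_K^2\, \|\nabla q_h\|_{L^2(K)}^2$. The cross-terms produced by $\bm{v}_h^K$ on neighbouring elements are lower order: using $C^2$-smoothness of $\coed$ to replace $\coed(\bm{x}_K)$ by $\coed$ up to an $O(h_K)$ perturbation, and using the $L^2$-orthogonality of $\nabla q_h$ to $\phi_K$-weighted constants on the neighbouring element up to inverse-inequality errors, those terms can be absorbed.

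The final step is to set $\bm{v}_h := \sum_{K \in \MThc} \bm{v}_h^K$. A colouring/finite-overlap argument on the patch graph (the number of $K$'s whose supports intersect a fixed element is bounded by quasi-uniformity) gives $\|\bm{v}_h\|_{H^1(\Ohc)}^2 \leq C \sum_K h_K^2 \|\nabla q_h\|_{L^2(K)}^2 = C\, \Hcnorm{q_h}^2$, and the elementwise lower bounds sum to $c\, \Hcnorm{q_h}^2$ minus absorbable lower-order terms. Dividing yields \eqref{eq_ap_infsup}. The main obstacle is the construction of $\phi_K$ in the equal-order $\mb{P}_m/\mb{P}_m$ setting: the standard element bubble of degree $d+1$ is not in $\Vhcc$, which is precisely why the hypothesis $m \geq 1$ is needed — one must use an interior Lagrange basis function instead — and why controlling the patch overlaps carefully (using the smoothness of $\coed$ and quasi-uniformity) is the delicate technical point.
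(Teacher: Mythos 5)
Your reduction via integration by parts and the general strategy (pair $q_h$ against a weighted, sign-controlled multiple of $\nabla q_h$ lying in the discrete space) is the right family of argument, but the key construction is exactly where your proposal breaks down, and you do not resolve it. First, the ``interior Lagrange node of $K$ (available because $m\geq 1$)'' does not exist in the relevant range: the velocity space here has degree $r\geq m+1$, and for the lowest admissible case $r=2$ the spaces $\mb{P}_2$ in $d=2,3$ have no interior node, while the element bubble has degree $d+1$ and is not in $\Vhcc$ --- the very obstacle you name at the end is left unsolved. Second, the fallback you sketch (a bump $\phi_K$ supported on the patch $\Delta(K)\cap\MThc$, e.g.\ a vertex hat, times the polynomial extension of $\nabla q_h|_K$) produces cross terms $-\int_{K'}\coed\,\bm{v}_h^K\cdot\nabla q_h\,\mathrm{d}\bm{x}$ on neighbouring elements $K'$ that are of the \emph{same} order $h_K^2\|\nabla q_h\|_{L^2(K)}\|\nabla q_h\|_{L^2(K')}$, carry no sign, and come with no small factor; the ``$L^2$-orthogonality of $\nabla q_h$ to $\phi_K$-weighted constants'' you invoke to absorb them is not available, so the summed lower bound can be destroyed. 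The smoothness of $\coed$ only contributes $O(h_K)$ corrections and cannot repair an $O(1)$ sign-indefinite term.

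The paper's proof avoids both problems with the classical Taylor--Hood-type trick: assuming each $K\in\MThc$ has an interior vertex, it takes for every interior edge $e$ the \emph{quadratic} Lagrange basis function $\phi_e$ at the edge midpoint and sets $\bm{v}_h=-\sum_e h_e^2\,\phi_e\,(\bm{t}_e\cdot\nabla q_h)\,\bm{t}_e$. Since the tangential derivative $\bm{t}_e\cdot\nabla q_h$ of the continuous function $q_h$ is continuous across the faces containing $e$, this $\bm{v}_h$ is a single globally continuous piecewise polynomial of degree $m+1\leq r$ in $\Vhcc\cap H_0^1(\Ohc)^d$; after integration by parts the integrand becomes the nonnegative square $\coed\,\phi_e\,|\bm{t}_e\cdot\nabla q_h|^2$, so there are no sign-indefinite cross terms at all, and the lower bound $C\Hcnorm{q_h}^2$ follows because the edge tangents within each element span $\mb{R}^d$ (norm equivalence plus inverse inequality), with $\|\bm{v}_h\|_{H^1(\Ohc)}\leq C\Hcnorm{q_h}$ by scaling. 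To fix your argument you would need to replace your elementwise bubbles by such edge-based, sign-producing test functions (or restrict to degrees high enough that genuine interior bubbles exist), not merely appeal to coefficient smoothness and finite overlap.
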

  \begin{proof}
  This result is a modification of \cite[Assumption
  3]{Guzman2018infsup}. We assume that every element $K \in \MThc$
  has an interior vertex. We denote by $\MEhcI$ the set of all
  interior edges in $\MThc$ ($\MEhcI$ = $\MFhcI$ in two dimensions).
  Let   $\bm{x}_e$ be the midpoint of the edge $e$. 
%  Then consider the
  %second-order $C^0$ space, 
  %consisting of continuous piecewise polynomials of degree $\leq 2$, 
  %on the partition $\MThc$
  %and let ${\phi}_e$ be the Lagrange basis function of this space
  %corresponding to the point $\bm{x}_e$. 
  Then let $\phi_e$ be the Lagrange basis function of the second-order
  $C^0$ space corresponding to the point $\bm{x}_e$.
  For any $e \in
  \MEhcI$, we have that ${\phi}_e \in H_0^1(\Ohc)$ and
  ${\phi}_e(\bm{x}_e) = 1$ and ${\phi}_e(\bm{x}) \geq 0 (\forall
  \bm{x} \in \Ohc)$. We define $ \bm{v}_h = - \sum_{e \in \MEhcI}
  h_e^2 {\phi}_e(\bm{x}) (\bm{t}_e \cdot \nabla q_h(\bm{x}))
  \bm{t}_e$,
  where $\bm{t}_e$ is the unit tangential vector on $e$. Since $q_h$
  is continuous on the domain $\Ohc$, there
  holds $\bm{v}_h \in H_0^1(\Omega_h^{\circ})^d$. Thus, applying the
  integration by parts yields that
  \begin{displaymath}
    \begin{aligned}
      \int_{\Ohc} &\nabla \cdot (\coed \bm{v}_h) q_h
      \d{x} = \sum_{e \in \MEhcI} h_e^2 \int_{w(e)} \coed \phi_e
      |\bm{t}_e \cdot \nabla q_h|^2 \d{x} \geq C \sum_{e \in
      \MEhcI} h_e^2 \int_{w(e)} \phi_e |\bm{t}_e \cdot \nabla q_h|^2
      \d{x}, \\
%      & \geq C \sum_{K \in \MThc} h_K^2 \sum_{e \in \overline{K}
      %\cap \MEhcI} \int_{K} |\bm{t}_e \cdot \nabla q_h |^2 \d{x} \geq
      %C \sum_{K \in \MThc}  h_K^2 \|\nabla q_h \|_{L^2(K)}^2 = C
      %\Hcnorm{q_h}^2,
    \end{aligned}
  \end{displaymath}
  where $w(e)$ is the set of elements that have the edge $e$. From the
  inverse inequality, we have that
  \begin{displaymath}
    \begin{aligned}
      \sum_{e \in \MEhcI} h_e^2 \int_{w(e)} \phi_e |\bm{t}_e \cdot
      \nabla q_h|^2 \d{x} 
      %\geq C \sum_{K \in \MThc} h_K^2 \sum_{e \in
      %\overline{K} \cap \MEhcI} \int_{K} |\bm{t}_e \cdot \nabla q_h
      %|^2 \d{x}
      \geq C \sum_{K \in \MThc}  h_K^2 \|\nabla q_h \|_{L^2(K)}^2 =
      C \Hcnorm{q_h}^2.
    \end{aligned}
  \end{displaymath}
  %where the last inequality follows from the assumption that every element
  %has an internal vertex \cite[Section 8]{Boffi2013mixed}. 
  In
  addition, it is trivial to check $\|\bm{v}_h
  \|_{H^1(\Omega_h^\circ)} \leq C \Hcnorm{q_h}$, which gives the
  estimate \eqref{eq_ap_infsup}. 
\end{proof}

%We note that in two
  %dimensions the assumption used in the proof is not necessary. We
  %refer to \cite[Section 8.8]{Boffi2013mixed} and
  %\cite{Boffi1994stability} for another method to construct $\bm{v}_h$
  %to meet the condition \eqref{eq_ap_infsup}.
  
Now let us verify the estimate \eqref{eq_infsupTH}. Given $q_h
\in \Qhcc$, we let $\wt{q}_h \in \Qh$ be  the corresponding piecewise
polynomial function in Lemma \ref{le_ap_Ehq}. As \eqref{eq_coedv}, we
let $\bm{v} \in H_0^1(\Omega)$ be the solution to  $ \nabla \cdot
(\coed \bm{v}) = \wt{q}_h$ in $\Omega$,
%\begin{displaymath}
  %\nabla \cdot (\coed \bm{v}) = \wt{q}_h, \ \text{in } \Omega, \quad
  %\bm{v} = \bm{0}, \ \text{on } \Gamma,
%\end{displaymath}
and extend $\bm{v}$ to $\Oh$ by zero, and let $\bm{v}_h \in
\bmr{V}_{h}^1$ be its Scott-Zhang interpolant. Decomposing $\bm{v}_h =
\bm{v}_{h, 1} + \bm{v}_{h, 2}$ as in Lemma \ref{le_ap_vhdecompose},
  we further have that 
\begin{displaymath}
  \| \wt{q}_h \|_{L^2(\Omega)}^2 = (\nabla \cdot (\coed \bm{v}_{h, 1}),
  \wt{q}_h)_{L^2(\Oh)} + (\nabla \cdot (\coed \bm{v}_{h, 2}),
  \wt{q}_h)_{L^2(\Oh)} + (\nabla \cdot (\coed(\bm{v} - \bm{v}_h)),
  \wt{q}_h)_{L^2(\Oh)}.
\end{displaymath}
Applying  integration by parts, the approximation property of
$\bm{v}_h$, and  \eqref{eq_ap_Ehq}, we get
\begin{displaymath}
  (\nabla \cdot (\coed (\bm{v} - \bm{v}_h)), \wt{q}_h)_{L^2(\Oh)}
  \leq C \|\wt{q}_h \|_{L^2(\Omega)} \Henorm{\wt{q}_h} \leq C
  \|q_h \|_{L^2(\Ohc)} \Hcnorm{q_h}.
\end{displaymath}
Using the fact that $\bm{v}_h = \bm{0}$ on $\partial \Oh$ and Lemma
\ref{le_ap_vhdecompose}, we have that 
\begin{displaymath}
  \begin{aligned}
    (\nabla \cdot &(\coed \bm{v}_{h, 2}), \wt{q}_h)_{L^2(\Oh)} =
    -(\coed  \bm{v}_{h, 2}, \nabla \wt{q}_h)_{L^2(\Oh)} + \sum_{f
    \in \MFhI} \int_f (\coed \bm{v}_{h, 2} ) \cdot \jump{ \wt{q}_h}
    \d{s} \\
    &\leq \Big(\sum_{K \in \MThG} h_K^{-2} \|\bm{v}_h\|_{L^2(K)}^2
    \Big)^{1/2} \Hcnorm{\wt{q}_h} + \sum_{f \in \MFhI} \int_f (\coed
    \bm{v}_{h, 2} ) \cdot \jump{\wt{q}_h} \d{s} \leq C \|q_h
    \|_{L^2(\Ohc)} \Hcnorm{q_h}, \\
  \end{aligned}
\end{displaymath}
and by the trace estimate, we get that
\begin{displaymath}
  \begin{aligned}
     \sum_{f \in \MFhI} \int_f &  (\coed \bm{v}_{h, 2} ) \cdot
     \jump{\wt{q}_h} \d{s} \leq C\Big( \sum_{K \in \MTh} h_K^{-2} \|
     \coed \bm{v}_{h, 2} \|_{L^2(K)}^2 + \| \nabla ( \coed \bm{v}_{h,
     2}) \|_{L^2(K)}^2 \Big)^{1/2} \Henorm{\wt{q}_h} \\ 
%     & \leq C \Big(
       %\sum_{K \in \MTh} h_K^{-2} \| \bm{v}_{h, 2}
     %\|_{L^2(K)}^2 + \| \nabla  \coed \otimes \bm{v}_{h, 2}
     %\|_{L^2(K)}^2 + \| \coed
     %\nabla \bm{v}_{h, 2} \|_{L^2(K)}^2 \Big)^{1/2}
     %\Hcnorm{\wt{q}_h} \\
     & \leq C \Big(\sum_{K \in \MThG} h_K^{-2} \|\bm{v}_h\|_{L^2(K)}^2
     \Big)^{1/2} \Hcnorm{\wt{q}_h} \leq C \|\nabla \bm{v}_h
     \|_{L^2(\Oho)} \Hcnorm{\wt{q}_h} \leq \|q_h \|_{L^2(\Ohc)}
     \Hcnorm{q_h}.
  \end{aligned}
\end{displaymath}
Moreover, there hold $\|\bm{v}_{h, 1} \|_{H^1(\Ohc)} \leq C
\|\bm{v}_h \|_{H^1(\Oh)} \leq C \|q_h\|_{L^2(\Ohc)}$ and 
\begin{displaymath}
  \begin{aligned}
    (\nabla \cdot (\coed \bm{v}_{h, 1}),& \wt{q}_h)_{L^2(\Oh)} \leq
    \|\bm{v}_{h, 1} \|_{H^1(\Ohc)}  \sup_{\bm{w}_h \in \Vhcc \cap
    H_0^1(\Ohc)^d} \frac{\int_{\Ohc} \nabla \cdot (\coed \bm{w}_h)
    q_h \d{x} }{\|\bm{w}_h \|_{H^1(\Ohc)}}. \\
  \end{aligned}
\end{displaymath}
Collecting all the above estimates implies that
\begin{displaymath}
  \|q_h \|_{L^2(\Ohc)} \leq C \Big( \sup_{\bm{w}_h \in \Vhcc \cap
    H_0^1(\Ohc)^d} \frac{\int_{\Ohc} \nabla \cdot (\coed \bm{w}_h)
    q_h \d{x} }{\|\bm{w}_h \|_{H^1(\Ohc)}} + \Hcnorm{q_h} \Big),
\end{displaymath}
which, together with Lemma \ref{le_ap_infsup}, 
immediately yields the estimate \eqref{eq_infsupTH}. This completes
the proof.
\end{appendix}

\section*{Acknowledgements}
This work was supported   by National Natural Science Foundation of
China (11971041, 12171340).

\bibliographystyle{amsplain}
\bibliography{../ref}

\end{document}